\newcommand{\noun}[1]{\textsc{#1}}
\numberwithin{equation}{section}
\numberwithin{figure}{section}
\theoremstyle{remark}
\newtheorem*{acknowledgement*}{\protect\acknowledgementname}
\theoremstyle{plain}
\newtheorem{thm}{\protect\theoremname}[section]
\theoremstyle{definition}
\newtheorem{defn}[thm]{\protect\definitionname}
\theoremstyle{remark}
\newtheorem{rem}[thm]{\protect\remarkname}
\newcommand\thmsname{\protect\theoremname}
\newcommand\nm@thmtype{theorem}
\theoremstyle{plain}
\newenvironment{namedthm}[1][Undefined Theorem Name]{
  \ifx{#1}{Undefined Theorem Name}\renewcommand\nm@thmtype{theorem*}
  \else\renewcommand\thmsname{#1}\renewcommand\nm@thmtype{namedtheorem}
  \fi
  \begin{\nm@thmtype}}
  {\end{\nm@thmtype}}
\newenvironment{lyxlist}[1]
	{\begin{list}{}
		{\settowidth{\labelwidth}{#1}
		 \setlength{\leftmargin}{\labelwidth}
		 \addtolength{\leftmargin}{\labelsep}
		 }}
	{\end{list}}
\theoremstyle{plain}
\newtheorem{lem}[thm]{\protect\lemmaname}
\theoremstyle{plain}
\newtheorem{prop}[thm]{\protect\propositionname}
\theoremstyle{definition}
\newtheorem*{problem*}{\protect\problemname}
\theoremstyle{plain}
\newtheorem{cor}[thm]{\protect\corollaryname}
\providecommand{\acknowledgementname}{Acknowledgement}
\providecommand{\corollaryname}{Corollary}
\providecommand{\definitionname}{Definition}
\providecommand{\lemmaname}{Lemma}
\providecommand{\problemname}{Problem}
\providecommand{\propositionname}{Proposition}
\providecommand{\remarkname}{Remark}
\providecommand{\theoremname}{Theorem}
\begin{document}


\global\long\def\tx#1{\mathrm{#1}}%
\global\long\def\dd#1{\tx d#1}%
\global\long\def\tt#1{\mathtt{#1}}%
\global\long\def\ww#1{\mathbb{#1}}%
\global\long\def\DD#1{\tx D#1}%
\global\long\def\nf#1#2{\nicefrac{#1}{#2}}%
\global\long\def\group#1{{#1}}%

\newcommand{\bigslant}[2]{{\raisebox{.3em}{$#1$}/\raisebox{-.3em}{$#2$}}}


\global\long\def\quot#1#2{\bigslant{#1}{#2}}%

\global\long\def\rr{\mathbb{R}}%
\global\long\def\rbar{\overline{\mathbb{R}}}%
\global\long\def\cc{\mathbb{C}}%
\global\long\def\cbar{\overline{\cc}}%
\global\long\def\zz{\mathbb{Z}}%
\global\long\def\zp{\mathbb{Z}_{\geq0}}%
\newcommandx\zsk[1][usedefault, addprefix=\global, 1=k]{\nicefrac{\zz}{#1\zz}}%
\global\long\def\qq{\mathbb{Q}}%
\global\long\def\qbar{\overline{\qq}}%

\global\long\def\rat#1{\cc\left(#1\right) }%
\global\long\def\pol#1{\cc\left[#1\right] }%
\global\long\def\id{\tx{Id}}%

\global\long\def\GL#1#2{\tx{GL}_{#1}\left(#2\right)}%


\global\long\def\fol#1{\mathcal{F}_{#1}}%
\global\long\def\pp#1{\frac{\partial}{\partial#1}}%
\global\long\def\ppp#1#2{\frac{\partial#1}{\partial#2}}%
\newcommandx\sat[2][usedefault, addprefix=\global, 1=\fol{}]{\tx{Sat}_{#1}\left(#2\right)}%
\global\long\def\lif#1{\mathcal{L}_{#1}}%
\newcommandx\holo[1][usedefault, addprefix=\global, 1=\gamma]{\mathfrak{h}_{#1}}%
\global\long\def\sing#1{\tx{Sing}\left(#1\right)}%
\global\long\def\flow#1#2#3{\Phi_{#1}^{#2}#3}%
\global\long\def\ddd#1#2{\frac{\dd{#1}}{\dd{#2}}}%
\newcommandx\per[1][usedefault, addprefix=\global, 1=R]{\mathfrak{T}_{#1}}%
\global\long\def\sec#1{\mathfrak{S}_{#1}}%


\global\long\def\ii{\tx i}%
\global\long\def\ee{\tx e}%

\global\long\def\re#1{\Re\left(#1\right)}%
\global\long\def\im#1{\Im\left(#1\right)}%

\global\long\def\sgn#1{\mbox{sign}\left(#1\right)}%
\global\long\def\floor#1{\left\lfloor #1\right\rfloor }%
\global\long\def\ceiling#1{\left\lceil #1\right\rceil }%


\global\long\def\germ#1{\cc\left\{  #1\right\}  }%
\global\long\def\frml#1{\cc\left[\left[#1\right]\right] }%


\newcommandx\norm[2][usedefault, addprefix=\global, 1=\bullet]{\left\Vert #1\right\Vert _{#2}}%
\newcommandx\neigh[2][usedefault, addprefix=\global, 1=, 2=0]{\left(\cc^{#1},#2\right)}%


\newcommandx\proj[2][usedefault, addprefix=\global, 1=1, 2=\cc]{\mathbb{P}_{#1}\left(#2\right)}%
\global\long\def\sone{\mathbb{S}^{1}}%


\global\long\def\longinj#1#2{\xymatrix{#1\ar@{^{(}->}[r]  &  #2}
 }%
\global\long\def\longsurj#1#2{\xymatrix{#1\ar@{->>}[r]  &  #2}
 }%
\global\long\def\inj{\hookrightarrow}%
\global\long\def\surj{\twoheadrightarrow}%
\global\long\def\longto{\longrightarrow}%
\global\long\def\longmaps{\longmapsto}%
\global\long\def\cst{\tx{cst}}%
\global\long\def\oo#1{\tx o\left(#1\right)}%
\global\long\def\OO#1{\tx O\left(#1\right)}%

\newcommandx\diff[1][usedefault, addprefix=\global, 1={\cc^{2},0}]{\tx{Diff}\left(#1\right)}%
\newcommandx\holf[1][usedefault, addprefix=\global, 1={\cc^{2},0}]{\tx{Holo}\left(#1\right)}%
\newcommandx\holb[1][usedefault, addprefix=\global, 1={\cc^{2},0}]{\tx{Holo_{b}}\left(#1\right)}%

\global\long\def\polg#1#2{\pol{#1}_{\leq2k}\left\{  #2\right\}  _{>0} }%
\global\long\def\eqtag{(\star)}%
\global\long\def\xx#1{#1}%
\global\long\def\xp{\xx +}%
\global\long\def\xm{\xx -}%
\global\long\def\zi{\xp\xm}%
\global\long\def\iz{\xm\xp}%
\newcommandx\obj[3][usedefault, addprefix=\global, 1=j, 2=\sharp]{\,^{#1}#3^{#2}}%
\newcommandx\sect[2][usedefault, addprefix=\global, 1=, 2=]{\obj[#1][#2]V}%
\newcommandx\ssect[2][usedefault, addprefix=\global, 1=, 2=]{\obj[#1][#2]{\mathcal{V}}}%
\newcommandx\hh[3][usedefault, addprefix=\global, 1=, 2=, 3=N]{\obj[#1][#2]{_{#3}\mathcal{H}}}%

\title{Analytic normal forms for planar resonant saddle vector fields}
\author{Loïc TEYSSIER}
\date{November 2022}
\address{Laboratoire I.R.M.A.\\
Université de Strasbourg}
\email{\url{teyssier@math.unistra.fr}}
\subjclass[2000]{34M35,37F75,34M50,34M30}
\begin{abstract}
We give essentially unique ``normal forms'' for germs of a holomorphic
vector field of the complex plane in the neighborhood of an isolated
singularity which is a $p:q$ resonant-saddle. Hence each vector field
of that type is conjugate, by a germ of a biholomorphic map at the
singularity, to a preferred element of an explicit family of vector
fields. These model vector fields are polynomial in the resonant monomial.

This work is a followup of a similar result obtained for parabolic
diffeomorphisms which are tangent to the identity, and solves the
long standing problem of finding explicit local analytic models for
resonant saddle vector fields.
\end{abstract}

\maketitle
\begin{acknowledgement*}
The author is partially supported by the bilateral Hubert-Curien Cogito
grant 2021-22.
\end{acknowledgement*}

\section{Introduction and statement of the results}

The general question of finding a simpler form, or ultimately ``the''
simplest form, of a dynamical system through changes preserving its
qualitative properties is central in the theory. A simpler form often
means a better understanding of the behavior of the system, or of
its analytic properties. This article is concerned with finding simple
models for holomorphic dynamical systems given by the flow of a $p:q$
resonant-saddle vector field (eigenratio $-\nf pq$) near the origin
of $\ww C^{2}$. (Precise definitions are given later in the introduction.)
We use intensively the appellation \emph{normal form} for vector fields
brought into these forms. Although the latter do not satisfy algebraic
properties usually required in normal form theory, its usage is nonetheless
spreading to refer to preferred forms which are \emph{essentially
unique} (say, up to the action of a finite-dimensional space).

It is possible to attach to a vector field $Z:=A\pp x+B\pp y$ two
dynamical systems: the one induced by the flow, and the underlying
foliation. In the former setting the objects of study are the trajectories
of $Z$ and their natural parameterization by the time, \emph{i.e.}
maximally-continued multivalued solutions of the autonomous differential
system with complex time
\begin{eqnarray}
\begin{cases}
\dot{x}\left(t\right) & =A\left(x\left(t\right),y\left(t\right)\right)\\
\dot{y}\left(t\right) & =B\left(x\left(t\right),y\left(t\right)\right)
\end{cases} &  & ,\label{eq:flow_system}
\end{eqnarray}
while in the latter only their images are of interest: the leaves
of the foliation $\mathcal{F}_{Z}$ are the integral curves of $Z$
regardless of how they are parameterized. Save for vertical leaves,
they correspond to the graphs of solutions of the nonautonomous ordinary
differential equation (since $\frac{\dd y}{\dd x}=\frac{\dot{y}}{\dot{x}}$)
\begin{eqnarray*}
A\left(x,y\left(x\right)\right)y'\left(x\right) & = & B\left(x,y\left(x\right)\right)\,.
\end{eqnarray*}
 Therefore two vector fields induce the same foliation when they differ
by the multiplication with a nonvanishing function (a holomorphic
unit). 

\subsection{A brief survey of the normal form problem for planar resonant singularities}

Being given a (germ of a) holomorphic vector field, we seek to simplify
its components by use of local analytic changes of coordinates. At
first one would try and simplify the vector field using formal power
series, and for planar vector field this process leads to polynomial
formal normal forms~\cite{Brjuno,Dulac}. Yet this formal approach
does not always preserve the dynamics, as is particularly the case
in the presence of resonances where divergence of the formal normalization
is the rule. 

Analyzing the divergence of these formal transforms provides many
an information about the dynamics or the integrability (in the sense
of Liouville) of the system. The theory of summability was used successfully
by J.~\noun{Martinet} and J.-P.~\noun{Ramis~}\cite{MaRa-SN,MaRa-Res}
to perform this task for saddle-node and resonant vector fields, ultimately
yielding a complete set of functional invariants that classifies the
foliation (called here the \emph{orbital modulus}). However, their
construction did not yield normal forms except in very exceptional
(integrable) cases. Some years later the complete modulus of resonant
and saddle-node vector fields (eigenratio $0$) was described in~\cite{VoroGrintch,VoroMesh,Tey-SN}
by appending to the orbital modulus another functional invariant,
called here the \emph{temporal modulus} and accounting for the multiplicative
units that give rise to different vector fields while keeping the
same foliation. Still no normal form was proposed. Analytic normal
forms were announced in~\cite{BrjunoAna} but no proof was subsequently
published. 

Building on an earlier work of P.~\noun{Elizarov}~\cite{Eliza},
a prenorrmal (nonunique) form is presented in~\cite{Tey-ExSN} that
allows to decide in some cases whether two vector fields (or foliations)
are not conjugate. At about the same time F.~\noun{Loray~}\cite{Loray}
performed a cleverly simple geometric construction that yielded normal
forms for codimension-1 saddle-node foliations, but only in the nongeneric
case where half the orbital modulus is nontrivial (\emph{convergent
saddle-nodes}, admitting two analytic separatrices through the singularity).
Loray's normal forms generalized the ones stated by J.~\noun{Écalle}
in~\cite{Ecal} (see the paper by D.~\noun{Sauzin}~\cite{Sauzin}
for precise statements and proofs regarding the resurgent approach
to saddle-node classification). In Écalle's terminology, convergent
saddle-nodes are called \emph{unilateral}, and save for that case
no general normal forms were given. Later, Écalle refined his theory
and techniques to propose a way of building a preferred representative
in the analytic class of a given resonant vector field~\cite{EcalReal},
the \emph{canonical-spherical synthesis}. Although uniqueness is reached,
this approach does not provide an explicit family of vector fields
that can be written down.

In joint work with R.~\noun{Schäfke,} an altogether different approach
was used in~\cite{SchaTey} to recover Loray normal forms, while
extending them from foliations to vector fields and generalizing them
to higher-codimension saddle-nodes. Based on a holomorphic fixed-point
method, it was later reused with C.~\noun{Rousseau} to encompass
the parametric case in~\cite{RT2}, in order to provide normal forms
for convergent saddle-node bifurcations, while at the same time Loray's
construction was ported to parametric families. Yet it was not possible
to drop the nongeneric assumption regarding the ``unilaterality''
of the orbital modulus. The reason behind this difficulty is explained
later, let us for now simply state that the remedy lies in introducing
a parameter, playing the same role as Écalle's \emph{twist} in the
twisted resurgent monomials that serve as building blocks for the
canonical-spherical synthesis. The trick was already used in~\cite{TeySphere}
to provide normal forms for general germs of a parabolic line biholomorphism
which is tangent to the identity. The latter paper was written with
the clear aim of overcoming the problem and porting the technique
to general planar resonant vector fields, and I encountered Écalle's
work on twisted monomials during the final stage of its redaction. 

The present paper is a blend of holomorphic fixed-point and twist
parameter, and it achieves the task of finding a general explicit
normal form family for resonant planar saddle vector fields.

\subsection{Statement of the main result}

Consider a planar holomorphic vector field $Z$ near some isolated
stationary point (or \textbf{singularity}), which we conveniently
locate at $\left(0,0\right)$ so that $Z\left(0,0\right)=0$. Its
Jacobian matrix at that point admits two eigenvalues, $\lambda_{1}$
and $\lambda_{2}$, at least one of which we assume nonzero (the origin
is a nondegenerate singular point of $Z$), say $\lambda_{2}\neq0$.
The \textbf{eigenratio} $\lambda:=\frac{\lambda_{1}}{\lambda_{2}}$
encodes an important part of the dynamics. It is well known for instance
that if $\lambda\notin\rr$, then there exists a local biholomorphic
mapping $\Psi~:~\neigh[2]\to\neigh[2]$, a property that we write
$\Psi\in\diff$, such that the pullback vector field
\begin{align*}
\Psi^{*}Z & :=\tx D\Psi^{-1}\left(Z\circ\Psi\right)
\end{align*}
is linear\emph{:} $\Psi^{*}Z\left(x,y\right)=\lambda_{1}x\pp x+\lambda_{2}y\pp y$.
We say that $Z$ is \textbf{analytically conjugate} to its linear
part (or analytically linearizable).

Of course when $\lambda\in\rr$ it may happen that $Z$ is not analytically
linearizable but if $\lambda>0$, then $Z$ is analytically conjugate
to a polynomial vector field~\cite{Dulac}. The difficult cases arise
when $\lambda\leq0$, and the really difficult cases (the ones that
still seem out of reach) occur when $\lambda$ is a negative irrational.
In the sequel we suppose that $\lambda\in\qq_{<0}$. 
\begin{defn}
We describe the class of \textbf{resonant }vector fields $Z$ and
their underlying resonant foliations $\fol Z$, assuming none of which
can be put in a linear form by conjugacy. We say that $Z$ admits
a \textbf{$p:q$ saddle} singularity at that point ($Z$ is a $p:q$
resonant vector field) if its eigenratio is $\lambda=-\frac{p}{q}$
for coprime positive integers $p$ and $q$. If $\lambda=0$ then
$Z$ is a \textbf{saddle-node} vector field.
\end{defn}

\begin{rem}
There exists a deep link between resonant saddles and saddle-nodes,
as we explain in Section~\ref{subsec:Summability}.
\end{rem}

The pioneering works of H.~\noun{Poincaré} and H.~\noun{Dulac} eventually
yield the formal classification of all $p:q$ resonant vector fields:
a codimension-$k$ vector field $Z$, for $k\in\zz_{>0}$, is always
formally conjugate to one of the \textbf{formal normal forms} \textbf{$P\left(u\right)\widehat{X}_{0}$
}where $u:=x^{q}y^{p}$ is called the \textbf{resonant monomial},
$P$ is a polynomial of degree at most $k$ in the variable $u$ with
$P\left(0\right)\neq0$, and 
\begin{eqnarray}
\widehat{X}_{0}\left(x,y\right) & := & u^{k}x\pp x+\left(1+\mu u^{k}\right)\left(\lambda x\pp x+y\pp y\right),~~\mu\in\cc.\label{eq:std_formal_NF}
\end{eqnarray}
 By this we mean that there exists an invertible formal power series
$\Psi=\left(\Psi_{1},\Psi_{2}\right)$, with $\Psi_{j}\in\frml{x,y}$,
such that $\Psi^{*}Z=P\widehat{X}_{0}$. This form is unique up to
the action of linear changes of variables $\left(x,y\right)\mapsto\left(\alpha x,\beta y\right)\in\GL 2{\cc}$
with $\left(\alpha^{q}\beta^{p}\right)^{k}=1$. The couple $\left(k,\mu\right)$
is the \textbf{formal orbital modulus} coming from the Dulac-Poincaré
normal form\textbf{~}\cite{Dulac}, while $P$ is the additional
\textbf{formal temporal modulus} obtained by A.~\noun{Bruno}~\cite{Brjuno}.
The formal modulus $\left(k,\mu,P\right)$ is left unchanged under
formal changes of variables on $Z$: it is a (complete) formal invariant.

We introduce the functional space of germs of a holomorphic function
in two complex variables $u$ and $y$: 
\begin{eqnarray*}
\polg uy & := & \left\{ y\sum_{n=1}^{2k}u^{n}f_{n}\left(y\right)~~~:~f_{n}\in\germ y\right\} ,
\end{eqnarray*}
where $\germ y$ is the algebra of germs of a holomorphic function
at $0\in\cc$. Consider for every parameter $c>0$ (Écalle's twist)
the polynomial vector field 
\begin{align}
X_{0}\left(x,y\right) & :=u^{k}x\pp x+\left(c\left(1-u^{2k}\right)+\mu u^{k}\right)Y\left(x,y\right)\label{eq:formal_NF}
\end{align}
where 
\begin{align*}
Y\left(x,y\right) & :=-px\pp x+qy\pp y.
\end{align*}
This is the main result of the article.
\begin{namedthm}[Normalization Theorem]
Let $Z$ be a $p:q$ resonant vector field with formal modulus $\left(k,\mu,P\right)$.
There exist:
\begin{itemize}
\item a bound $c\left(Z\right)>1$ and, for each choice of the twist $c\geq c\left(Z\right)$,
\item two germs $G,R\in\polg uy$ ,
\item a local holomorphic change of coordinates $\Psi\in\diff$,
\end{itemize}
such that 
\begin{align}
\Psi^{*}Z=Z_{G,R} & :=\frac{P}{1+PG}X_{R}\label{eq:analytic_NF}\\
X_{R} & :=X_{0}+RY.\nonumber 
\end{align}
Moreover any two $Z_{G,R}$ and $Z_{\widetilde{G},\widetilde{R}}$
are analytically conjugate near $\left(0,0\right)$ if and only if
they are conjugate by some $\left(x,y\right)\mapsto\left(\alpha x,\beta y\right)\in\GL 1{\cc}$
with $\left(\alpha^{q}\beta^{p}\right)^{k}=1$.
\end{namedthm}
\begin{rem}
~
\begin{enumerate}
\item The reader should be aware at that point of a slight abuse of notation.
When plugging the variables $\left(x,y\right)$ in the expression
above, the monomial $u$ should be substituted with $u\left(x,y\right)$.
For instance if $\mu=0$, then
\begin{align*}
X_{0}\left(x,y\right) & =y^{pk}x^{qk+1}\pp x+c\left(1-x^{2qk}y^{2pk}\right)\left(-px\pp x+qy\pp y\right).
\end{align*}
\item The proof actually asserts that the $2k$ functions $y\mapsto f_{n}\left(y\right)$
appearing in $R$ and $G$ are holomorphic and bounded on the disc
$\left\{ y~:~\left|y\right|<2\right\} $.
\end{enumerate}
\end{rem}

\begin{rem}
~
\begin{enumerate}
\item In~\cite{Loray} it is proved that any germ of a saddle foliation
at $\left(0,0\right)$ with eigenratio $\lambda$ can be defined in
a convenient local analytic chart by a vector field of the form
\begin{eqnarray*}
x\pp x+\lambda\left(f\left(y\right)+x\right)y\pp y &  & ~~,~f\in1+y\germ y.
\end{eqnarray*}
Clearly this form is very simple, but it is not unique. The link between
this form and the normal forms presented here is unclear.
\item In~\cite{EcalReal} a methodological approach for ``canonical spherical
synthesis'' of \emph{e.g.} resonant foliations is proposed. Unfortunately
it is not possible to directly extract from it an explicit form for
the synthesized vector fields.
\item The form of $R$ and $G$ is satisfying and seems optimal in the sense
that there is as many free ($2k$) components in $R$ (\emph{resp}.
$G$) than there is in the orbital modulus (\emph{resp}. temporal
modulus). Although the mapping 
\begin{align*}
\text{modulus~~}\longmapsto\text{~~normal~form }
\end{align*}
 is certainly not as simple as in the case of unilateral moduli described
in~\cite{SchaTey}, where it is ``triangular'' and computable,
the works of Écalle may provide a path to find an explicit expression
for it.
\end{enumerate}
\end{rem}

\begin{defn}
We use the notations introduced in the Normalization Theorem.
\begin{enumerate}
\item The vector field $Z_{G,R}$ is called an \textbf{analytic normal form}
of $Z$.
\item The vector field $X_{R}$ is called an \textbf{analytic orbital normal
form} of $Z$. The name is justified by the fact that $\fol Z$ is
analytically conjugate to $\fol{X_{R}}$.
\end{enumerate}
\end{defn}

\subsection{Outline of the construction and structure of the article}

The proof of the Normalization Theorem is done in three steps and
relies on Martinet-Ramis orbital classification of resonant planar
foliations~\cite{MaRa-Res}, which is summarized in Section~\ref{sec:Martinet-Ramis}.
That general scheme has already been used successfully in~\cite{SchaTey,RT2},
although the technical intricacies differ from one case to the other
and require specific arguments. In particular in the present situation
we rely on results that have recently been obtained in~\cite{TeySphere}
for the realization of analytic class of parabolic germs. (Of course
the link between the class of a saddle foliation and its holonomy
is well known, but here we do not directly invoke such arguments.) 
\begin{lyxlist}{00.00.0000}
\item [{\textbf{Orbital~realization}}] Being given the Martinet-Ramis
modulus associated to a resonant vector field $Z$, we build a vector
field $X_{R}$ in normal form within the same formal class and with
the same orbital modulus. This is done in Section~\ref{sec:Orbital_realization}
by a fixed-point method involving a Cauchy-Heine transform solving
a nonlinear Cousin problem associated to a sectorial decomposition
of the $\left(u,y\right)$-space (Section~\ref{sec:Notations}).
The trickiest part in the construction is to find a model vector field
$X_{0}$ whose orbit space in the intersection of consecutive sectors
can be controlled. More precisely, by increasing the twist parameter
$c$ we are able to shrink the size of the orbit spaces so that they
become adapted to the given orbital modulus. This is explained in
Section~\ref{sec:Martinet-Ramis}. Without the introduction of the
twist parameter it seems very dubious to provide normal forms: if
one tries by another mean to reduce the size of the orbit space in
one intersection to make it fit within the disc of convergence of
a component of the modulus, then the size increases in the next intersection
and may spill out of that of the corresponding component. The notable
exception comes from unilateral moduli, where only one out of two
components are nontrivial and the same strategy as~\cite{SchaTey}
would work for resonant saddle vector fields.\\
The process yields a vector field $X_{R}$ where $R$ is holomorphic
on some «hollow» domain, which is not a neighborhood of the origin
but contains the tube $\cc\times\left\{ 1<\left|y\right|<2\right\} $.
By studying the growth of $x\mapsto R\left(x,y\right)$ for fixed
$y$ we deduce that $R$ has a polynomial form $\sum_{n=1}^{2k}u^{n}f_{n}\left(y\right)$,
while the shape of the hollow domain forces each $f_{n}$ to extend
holomorphically to the whole disc $\left\{ \left|y\right|<2\right\} $.
\item [{\textbf{Temporal~realization}}] So far we have found an analytic
change of coordinates bringing $Z$ to some $UX_{R}$ with $U\left(0,0\right)\neq0$.
Sending $UX_{R}$ to $Z_{G,R}=\frac{P}{1+PG}X_{R}$ is done by solving
the cohomological equation $X_{R}\cdot T=\frac{1}{U}-\frac{1}{P}-G$,
where $X_{R}\cdot T$ is the Lie (directional) derivative of the function
$T$ along $X_{R}$. Being given $U$ and $P$, there is a unique
choice of $G$ in normal form such that the equation has an analytic
solution $T$. To understand this we need to exhibit an explicit cokernel
for the derivation $X_{R}$, by providing a section of the period
operator associated to $X_{R}$. Here again the main tool is the Cauchy-Heine
transform (although it does not need to be iterated). This study is
performed in Section~\ref{sec:Period} but we give more details in
Section~\ref{subsec:Period_section} to come, since knowing the cokernel
is worthwhile and carries a lot of useful applications.
\item [{\textbf{Uniqueness~of~the~realization}}] So far the family of
vector fields $\left\{ Z_{G,R}\right\} $ has been proved versal,
thus it is natural to describe the automorphisms of the family to
study its universality. Once the diagonal action $\left(x,y\right)\mapsto\left(\alpha,\beta y\right)$
with $\left(\alpha^{q}\beta^{p}\right)^{k}=1$ has been factored out,
it remains to prove that the only automorphism which is tangent to
the identity is the identity itself. According to the discussion in
the previous item, the function $G$ is unique for a given orbital
class $X_{R}$, therefore we must prove that if two foliations in
normal form $\fol{X_{R}}$ and $\fol{X_{\widetilde{R}}}$ are conjugate
by some $\Psi$ then $\Psi=\id$. First, we show that $\Psi$ can
be assumed to preserve the resonant monomial, \emph{i.e.} $\Psi=\left(x\ee^{-pN},y\ee^{qN}\right)$
for some holomorphic germ $N$, then we relate the condition $\Psi^{*}\fol{X_{R}}=\fol{X_{\widetilde{R}}}$
to a cohomological equation involving $N$. The latter has only trivial
solutions, thanks to the description of the cokernel that we have
done. We give a full proof of the uniqueness statement in Section~\ref{sec:Uniqueness}.
\end{lyxlist}

\subsection{\label{subsec:Period_section}Cohomological equations, period operator
and its natural section}

Because $\left[X_{0},Y\right]=0$ we can follow the strategy laid
out in~\cite{Tey-SN}.
\begin{itemize}
\item $X_{0}$ is (formally/analytically) conjugate to $X_{R}$ if and only
if there exists a (formal/analytic) solution of the cohomological
equation
\begin{eqnarray*}
X_{R}\cdot N & = & -R.
\end{eqnarray*}
In that case a conjugacy is obtained as $\flow YN{~:~\left(x,y\right)\mapsto\left(x\exp\left(-pN\left(x,y\right)\right),y\exp\left(qN\left(x,y\right)\right)\right)}$,
the flow at time $N$ of the vector field $Y$.
\item $UX_{R}$ is (formally/analytically) conjugate to $VX_{R}$ if and
only if there exists a (formal/analytic) solution of the cohomological
equation
\begin{eqnarray*}
X_{R}\cdot T & = & \frac{1}{U}-\frac{1}{V}.
\end{eqnarray*}
In that case a conjugacy is obtained as $\flow{VX_{R}}T{}$.
\end{itemize}
Such equations are called \textbf{cohomological equations}\emph{.
}The obstructions to solve formally or analytically these equations
reasonably provides us with invariants of classification. 
\begin{namedthm}[Cohomological Theorem]
Let $X_{R}$ be an orbital normal form.
\begin{enumerate}
\item Consider the cohomological equation $X_{R}\cdot F=G$ with $G\in\germ{x,y}$.
\begin{enumerate}
\item There exists a formal solution $F\in\frml{x,y}$ if and only if the
Taylor expansion of $G$ at $\left(0,0\right)$ does not contain terms
$u^{n}$ for $n\in\left\{ 0,\ldots,k\right\} $ (in that case $F$
is unique). We write $\germ{x,y}_{>k}$ the space of all such germs,
and we assume in the following that $G$ belong to that space.
\item There exists a neighborhood $\Omega$ of $\left(0,0\right)$ on which
$G$ is holomorphic and bounded, a covering of $\Omega\backslash\left\{ xy=0\right\} $
into $2k$ ``sectors'' $\obj[j][\sharp]{\Omega}$, for $j\in\zsk$
and $\sharp\in\left\{ \zi,\iz\right\} $, together with $2k$ bounded
holomorphic functions $\obj F\in\holb[{\obj{\Omega}}]$, such that
$X_{R}\cdot\obj F=G$.
\item The difference $\obj[][\zi]F-\obj[][\iz]F$ (\emph{resp}. $\obj[j+1][\iz]F-\obj[][\zi]F$)
is constant on the leaves of $\fol{X_{R}}$ and tends to $0$ on $\left\{ xy=0\right\} $,
therefore it defines a holomorphic function $\obj[][-]f\in\germ{\frac{1}{h}}_{>0}$
(\emph{resp}. $\obj[][+]f\in\germ h_{>0}$) of the leaf coordinate
$h\in\cc$.
\end{enumerate}
\item We call \textbf{period operator} of $X_{R}$ the linear mapping
\begin{eqnarray*}
\per\,:\,G\in\germ{x,y}_{>k} & \longmapsto & \left(\obj[][-]f,\obj[][+]f\right)_{j\in\zsk}\in\left(\germ{\frac{1}{h}}_{>0}\times\germ h_{>0}\right)^{\times k}.
\end{eqnarray*}

\begin{enumerate}
\item The formal solution $F$ given in 1.(a) is a convergent power series
if and only if $\per\left(G\right)=0$.
\item The period operator is surjective. More precisely, being given $f\in\left(\germ{\frac{1}{h}}_{>0}\times\germ h_{>0}\right)^{\times k}$
there exists a unique $G\in\polg uy$ such that $\per\left(G\right)=f$. 
\end{enumerate}
\end{enumerate}
\end{namedthm}
\begin{defn}
The operator $\sec R~:~f\mapsto G$ defined in 2. of the Cohomological
Theorem is called the \textbf{natural section of the period operator}
of $X_{R}$.
\end{defn}

We may reformulate algebraically the previous theorem by saying that
the following sequence of linear maps is exact:
\[
\begin{array}{ccccccc}
 & \mbox{cst} &  & X_{R}\cdot &  & \per\\
\ww C & \longinj{}{} & \germ{x,y} & \longrightarrow & \germ{x,y}_{>k} & \longsurj{}{} & \left(\germ{\frac{1}{h}}_{>0}\times\germ h_{>0}\right)^{\times k}
\end{array}
\]
with a similar exact sequence at a formal level
\[
\begin{array}{ccccccc}
 & \mbox{cst} &  & X_{R}\cdot &  & \Pi_{k}\\
\ww C & \longinj{}{} & \frml{x,y} & \longrightarrow & \frml{x,y} & \longsurj{}{} & \pol u_{\leq k}
\end{array},
\]
where $\Pi_{k}~:~\sum_{n,m\in\zp}g_{n,m}x^{n}y^{m}\mapsto\sum_{\ell\leq k}g_{q\ell,p\ell}u^{\ell}$
is the canonical projection coming from the power series expansion.
We deduce from this theorem the following interpretation of the different
moduli involved (see Corollary~\ref{cor:invar_as_period}).
\begin{itemize}
\item The obstruction to solve formally $X_{R}\cdot\widehat{T}=\frac{1}{U}$
is located in $P:=\Pi_{k}\left(U\right)$ and that gives the formal
temporal modulus of Bruno.
\item The obstruction to solve analytically $X_{R}\cdot T=\frac{1}{U}-\frac{1}{P}$
is embodied the period $\per\left(\frac{1}{U}-\frac{1}{P}\right)$
and that gives the temporal modulus ($t$-shift) of Grintchy-Voronin.
\item The obstruction to solve analytically $X_{R}\cdot N=-R$ is the period
$\per\left(-R\right)$ and that provides the logarithmic form of Martinet-Ramis
orbital modulus.
\end{itemize}
Partial results pertaining to the Cohomological~Theorem (namely,
2.~(a)) were already obtained in~\cite{BerLor}.
\begin{rem}
Due to the particular structure of the leaf space of $\fol{X_{R}}$,
the period operator is not onto the whole space of germs $\left(\germ{\frac{1}{h}}_{>0}\times\germ h_{>0}\right)^{\times k}$.
Indeed, by taking a smaller neighborhood of $\left(0,0\right)$ the
size of the leaf space in the intersection of consecutive sectors
does not shrink to a point. To realize a given element of that space
as a period of $X_{R}$ it is probably necessary to take a larger
$c$ (and thus another $R$ while staying in the same orbital class).
Precise statements are given in Theorem~\ref{thm:natural_section}.
\end{rem}

As an application of the Cohomological Theorem, the same reasoning
as the one produced in~\cite{RT2} (that uses the natural section
of the period operator) allows to generalize a result of M.~\noun{Berthier}
and F.~\noun{Touzet}~\cite[Proposition 5.5]{BerTouze} to resonant
saddle foliations: a resonant saddle foliation admits a Liouvillian
first-integral if and only if its orbital normal form $X_{R}$ is
a Bernoulli vector field, that is $R\left(u,y\right)=y^{d}r\left(u\right)$
for some $d\in\zp$ and $r\in u\pol u_{<2k}$. We leave details to
the interested reader.

\subsection{\label{subsec:Summability}Summability and divergence}

The Cohomological Theorem offers in 1.(b) ``sectorial'' sums to
the (generally) divergent power series $F$. As will be made clear
in Section~\ref{sec:Period}, the divergence is concentrated in the
resonant monomial. This property was already underlined in~\cite{MaRa-Res}
for the orbital problem. In fact, when $G\in\germ{u,y}_{>k}$ one
can prove that the $\obj F$ come from holomorphic functions in the
variable $\left(u,y\right)\in\obj V\times\left\{ \left|y\right|<2\right\} $
where $u$ is replaced by $u\left(x,y\right)$ and $\sect$ is a traditional
sector in the $u$-variable. From Ramis-Sibuya's theorem (see \emph{e.g.~}\cite{LodRich}),
we can deduce that $F=\sum_{n\geq0}f_{n}\left(y\right)u^{n}$ where
the $f_{n}$ are holomorphic and bounded on the disc $D:=\left\{ \left|y\right|<2\right\} $
with $\norm[f_{n}]D=\OO{B^{n}\left(n!\right)^{\nf 1k}}$, \emph{i.e.}
that $F$ is transversely $k$-summable in the variable $u$.

All these facts can also be deduced from corresponding properties
already known for saddle-node vector fields. One may indeed observe
that the resonant saddle vector fields we obtain as normal forms do
come from saddle-node vector fields in the variables $\left(u,y\right)$.
This is made apparent by considering the foliation $\fol{X_{R}}$
as integrating the distribution of dual differential $1$-forms:
\begin{align*}
u^{k}x\dd y+\left(c\left(1-u^{2k}\right)+\mu u^{k}+y\sum_{n=1}^{2k}u^{n}f_{n}\left(y\right)\right)\left(px\dd y+qy\dd x\right).
\end{align*}
Recalling that $u=x^{q}y^{p}$, we deduce that $\fol{X_{R}}$ also
integrates the distribution given by
\begin{align}
\omega_{R}\left(u,y\right) & :=u^{k+1}\dd y+y\left(c\left(1-u^{2k}\right)+\mu u^{k}+y\sum_{n=1}^{2k}u^{n}f_{n}\left(y\right)\right)\dd u.\label{eq:NF_u-y}
\end{align}
This is exactly a saddle-node in the variables $\left(u,y\right)$
with formal invariant $\left(k,\mu\right)$. This correspondence is
well known for formal normal forms ($R:=0$), and we just established
it at an analytical level. 

\section{\label{sec:Notations}Notations and basic tools}

Since a lot of objects of different natures mix up (germs at $\left(0,0\right)$
of holomorphic objects, sectorial objects, Banach spaces of functions
\emph{etc}.), we must introduce numerous notations. This section provides
the reader with a glossary of notations and conventions we stick to
throughout the article. 
\begin{itemize}
\item It will be convenient to follow the convention 
\begin{align*}
0^{+1}:=0~~~ & \text{and}~~~0^{-1}:=\infty~.
\end{align*}
\item We fix a pair $\left(p,q\right)$ of coprime positive integers and
we define the associated \textbf{resonant monomial}
\begin{align*}
u\left(x,y\right) & :=x^{q}y^{p}.
\end{align*}
\item Most constructions take place in the variables $\left(u,y\right)$
and are pullbacked in the variables $\left(x,y\right)$ through the
\textbf{canonical embedding} 
\begin{align*}
\iota~ & :~\left(x,y\right)\longmapsto\left(u\left(x,y\right),y\right).
\end{align*}
In order to keep notations as light as possible, we write $u_{*}$
to stand for the value of $u\left(x,y\right)$ in expressions containing
$x$ and $y$, in order to distinguish it from the usage of $u$ as
a standalone symbol. We use a similar notation for functions. For
instance starting from $f~:~\left(u,y\right)\mapsto f\left(u,y\right)$
we write $f_{*}$ to stand for the function $\iota^{*}f=f\circ\iota$:
\begin{align*}
f_{*}~:~\left(x,y\right) & \longmapsto f\left(u\left(x,y\right),y\right).
\end{align*}
\end{itemize}

\subsection{\label{subsec:sectors}Sector-related notations}

We fix $k\in\zz_{>0}$ . Undoubtedly the biggest source of notational
heaviness comes from the decomposition of the $\left(x,y\right)$-space
and $u$-space into $2k$ sectors. This decomposition is classical
yet we wish to introduce notations that both contain all necessary
contextual information and embodies the underlying dynamical structure.
The vast majority of objects we introduce are collections $O$ of
$2k$ sectorial objects $\obj[][\bullet]O$ indexed by $j\in\zsk$
and $\bullet\in\left\{ +,-,\zi,\iz\right\} $. Here is the list of
the conventions that are always used in the rest of the article:
\begin{itemize}
\item $j$ is some (arbitrary) element of $\zsk$ ;
\item the symbol $\sharp$ is some (arbitrary) element of $\left\{ \zi~,~\iz\right\} $;
\item we simply write the collection $O=\left(\obj O\right)_{j\in\zsk,\,\sharp\in\left\{ \zi,\iz\right\} }$
as 
\begin{eqnarray*}
O & = & \left(\obj O\right),
\end{eqnarray*}
and the collection $O=\left(\obj[][\star]O\right)_{j\in\zsk,\,\star\in\left\{ -,+\right\} }$
as 
\begin{eqnarray*}
O & = & \left(\obj[][\pm]O\right).
\end{eqnarray*}
\end{itemize}
\begin{figure}[H]
\hfill{}\includegraphics[width=0.4\columnwidth]{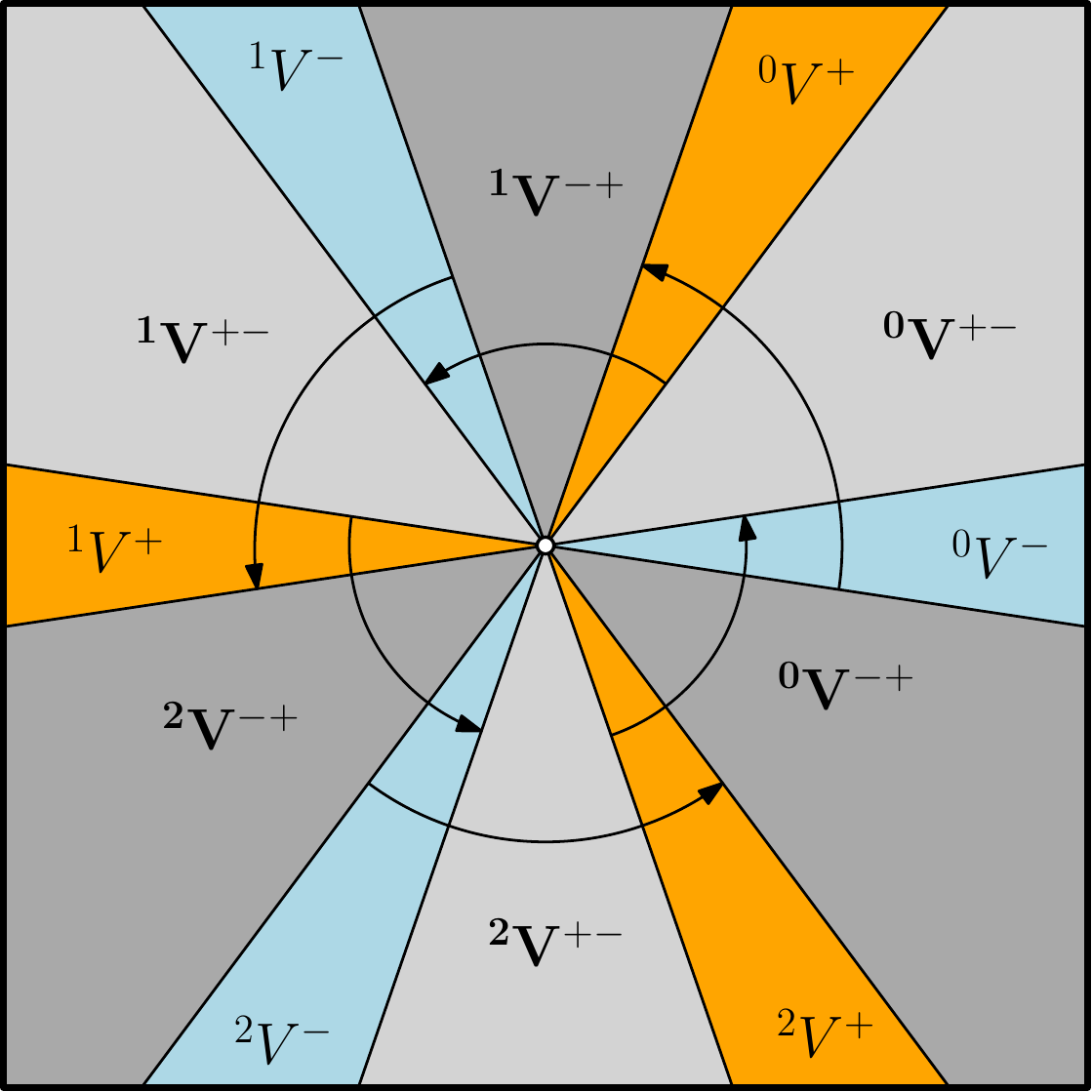}\hfill{}\includegraphics[width=0.4\columnwidth]{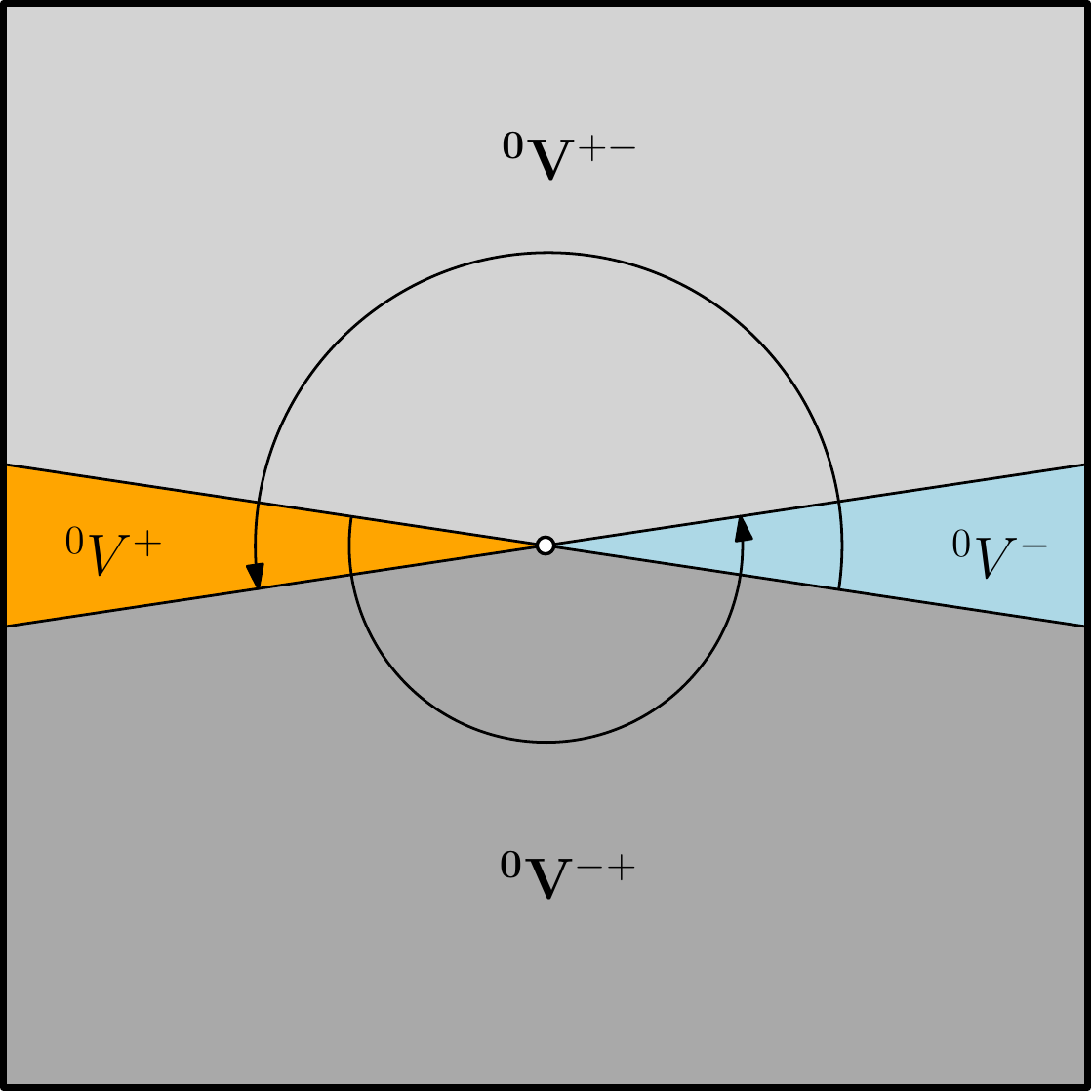}\hfill{}

\caption{\label{fig:sectors}The sectorial decomposition near $0$ in the case
$k=3$ (left) or $k=1$ (right).}
\end{figure}

\begin{defn}
\label{def_sectors}This should be read with the figure \ref{fig:sectors}
in mind. 
\begin{enumerate}
\item The\emph{ }\textbf{sectorial decomposition} of the $u$-space is the
collection of $2k$ open, infinite sectors $\left(\sect\right)$ defined
as
\begin{eqnarray*}
\sect[j][\zi] & := & \left\{ u\neq0\,:\,\left|\arg u-\pi\frac{2j+1}{2k}\right|<\frac{5\pi}{8k}\right\} ,\\
\sect[j][\iz] & := & \left\{ u\neq0\,:\,\left|\arg u+\pi\frac{2j+1}{2k}\right|<\frac{5\pi}{8k}\right\} .
\end{eqnarray*}
\item The intersection of pairwise successive sectors is either one of the
following sectorial domains (if $k>1$) 
\begin{align*}
\sect[][-] & :=\sect[][\zi]\cap\sect[][\iz],\\
\sect[][+] & :=\sect[j+1][\iz]\cap\sect[][\zi].
\end{align*}
We extend this definition for $k=1$ by considering the two connected
components $\sect[0][-]$ and $\sect[0][+]$ of $\sect[0][\zi]\cap\sect[0][\iz]$
under the condition that $\sect[0][\pm]$ contains $\pm\rr_{<0}$.
\item Let us define:
\begin{align*}
\ssect[][\zi] & :=\left\{ \left(u,y\right)\in\cc^{2}~:~\left|y\right|<2,~u\in\sect[][\zi]\right\} \\
\ssect[][\iz] & :=\left\{ \left(u,y\right)\in\cc^{2}~:~1<\left|y\right|<2,~u\in\sect[][\iz]\right\} ,
\end{align*}
as well as the corresponding intersections $\ssect[][\pm]$ build
in a similar fashion as in~2. The notation $\mathcal{V}$ denotes
the full collection $\left(\ssect\right)_{j,\sharp}$.
\item We pull-back these sectors by defining 
\begin{align*}
\ssect_{*} & :=\iota^{-1}\left(\ssect\right)=\left\{ \left(x,y\right)\in\cc^{2}~:~\iota\left(x,y\right)\in\ssect\right\} ,
\end{align*}
 and form the filled union
\begin{align*}
\mathcal{V}_{*} & :=\tx{int}\left(\overline{\bigcup_{j,\sharp}\ssect_{*}}\right).
\end{align*}
\end{enumerate}
\end{defn}

\begin{rem}
The domain $\mathcal{V}_{*}$ is the disjoint union of $\bigcup_{j,\sharp}\ssect_{*}$
on the one hand and $\left\{ \left(x,y\right)\in\mathcal{V}_{*}~:~xy=0\right\} $
on the other hand. It is not a neighborhood of $\left(0,0\right)$,
and each $\ssect_{*}$ is not connected whenever $\left(p,q\right)\neq\left(1,1\right)$.
\end{rem}

\subsection{Function spaces}

We write $\pol z_{d\geq\nu}$ the algebra of polynomials in $z$ of
degree at most $d$ and valuation at least $\nu$. We omit to write
$\nu$ whenever it equals $0$. The field $\cc$ may also be replaced
with other commutative rings.
\begin{defn}
\label{def_Banach_space}~
\begin{enumerate}
\item Let $\mathcal{D}\subset\ww C^{n}$ be a domain. We denote by $\holf[\mathcal{D}]$
the algebra of functions holomorphic on $\mathcal{D}$.
\item We define the Banach algebra $\holb[\mathcal{D}]$ of all $\cc$-valued
bounded holomorphic functions on $\mathcal{D}$ with continuous extension
to the closure $\overline{\mathcal{D}}$, equipped with the norm:
\begin{eqnarray*}
\norm[f]{\mathcal{D}} & := & \sup_{\mathbf{z}\in\mathcal{D}}\left|f\left(\mathbf{z}\right)\right|.
\end{eqnarray*}
\item Being given a finite collection $\mathcal{D}:=\left(\obj[][\pm]{\mathcal{D}}\right)$
of $2k$ domains of $\cc^{2}$, we denote by $\holb[\mathcal{D}]$
the product Banach space $\prod_{j,\pm}\holb[{\obj[][\pm]{\mathcal{D}}}]$
equipped with the product norm 
\begin{eqnarray*}
f=\left(\obj[][\pm]f\right)~,~~~~\norm[f]{\mathcal{D}} & := & \max_{j,\pm}\norm[{\obj[][\pm]f}]{\obj[][\pm]{\mathcal{D}}}.
\end{eqnarray*}
\item Let $D^{\pm}\subset\proj$ be a domain containing $0^{\pm1}$. We
define the Banach algebra $\holb[D]'$ of all $\cc$-valued bounded
holomorphic functions on $D$, admitting a continuous extension to
the closure $\overline{D}$ and vanishing at $0^{\pm1}$, equipped
with the norm
\begin{eqnarray*}
\norm[f]D' & := & \sup_{z\in D}\frac{\left|f\left(z\right)\right|}{\left|z\right|^{\pm1}}.
\end{eqnarray*}
If $D$ is a star-shaped domain centered at $0^{\pm1}$, then $\norm[f]D'\leq\norm[f']D$
.
\end{enumerate}
\end{defn}

\subsection{Vector fields}
\begin{itemize}
\item $Z$ is a resonant-saddle or saddle-node vector field near $\left(0,0\right)$.
The notation $X$ is in general reserved for vector fields with a
trivial temporal component.
\item $Z\cdot F$ stands for the \textbf{Lie derivative} along $Z$, acting
on $F\in\frml{x,y}$ or on $F\in\germ{x,y}$.
\item We let $\flow Zt{\left(x,y\right)}$ be the \textbf{flow} at time
$t$ of $Z$, \emph{i.e}. the local holomorphic solution of~\ref{eq:flow_system}
with initial condition $\left(x,y\right)$. It is locally holomorphic
in the variables $\left(x,y,t\right)$ taken sufficiently close to
$\left(0,0,0\right)$.
\item A \textbf{first-integral} $H$ of $Z$ is a holomorphic function such
that $Z\cdot H=0$.
\item $\left(k,\mu\right)\in\ww N_{>0}\times\ww C$ is the \textbf{formal
orbital modulus} of $Z$ while $P\in\pol u_{\leq k}$ with $P\left(0\right)\neq0$
is its \textbf{formal temporal modulus}. The complete formal modulus
is $\left(k,\mu,P\right)$.
\item The \textbf{formal orbital normal form} associated to the formal modulus
$\left(k,\mu,P\right)$ is the polynomial vector field depending on
the twist parameter $c>1$:
\begin{align}
X_{0}\left(x,y\right) & :=u_{*}^{k}x\pp x+\left(c\left(1-u_{*}^{2k}\right)+\mu u_{*}^{k}\right)Y\left(x,y\right)~\label{eq:orbital_formal_model}\\
Y\left(x,y\right) & :=-px\pp x+qy\pp y.\nonumber 
\end{align}
\end{itemize}

\section{\label{sec:Martinet-Ramis}Martinet-Ramis orbital modulus of a resonant
saddle}

Finding formal models for the dynamics of 2-dimensional vector fields
is easy enough. When these formal normalizations fail to be analytic,
one must perform a finer study to obtain the analytical classification.
In the orbital case, \emph{i.e.} that of foliations, this amounts
to endowing the leaf space with a holomorphic structure and describing
the analytic diffeomorphisms between these manifolds. The now-classical
strategy for resonant foliations is to build adapted sector-like areas
whose closure is a neighborhood of the singularity, and to find normalizing
sectorial maps conjugating the dynamics with that of the formal model
$\fol{X_{0}}$. 

For expository reasons, in this section we briefly explain how both
tasks are achieved for $1:1$ foliations, following the ideas of J.~\noun{Martinet}
and J.-P.~\noun{Ramis}~\cite{MaRa-SN,MaRa-Res} and introducing
some material needed later on. In Section~\ref{subsec:Realization_p:q}
we stress the slight modifications that are needed to make the general
theory for $p:q$ resonant foliations work. 

\subsection{\label{subsec:Model_orbit-space}Study of the formal model and making
of the sectors}

Here we investigate the global dynamical properties, for a fixed value
of $c>0$, of the vector field $X_{0}$ given by~(\ref{eq:orbital_formal_model}).
We are particularly interested in describing its orbit space, which
can be achieved through the study of the Liouvillian first-integral
\begin{align}
H\left(u,y\right) & :=y\widehat{H}\left(u\right),\nonumber \\
\widehat{H}\left(u\right) & :=u^{-\mu}\exp\frac{c\left(u^{-k}+u^{k}\right)}{k}.\label{eq:model_first-int}
\end{align}
By letting $X_{0}\cdot$ stand for the Lie directional derivative
along $X_{0}$, an elementary computation yields
\begin{align*}
X_{0}\cdot H_{*} & =0.
\end{align*}
(In fact $X_{0}$ is built as the dual vector field of the rational
$1$-form $\frac{\dd H}{H}$.) This identity tells us that level sets
of $H_{*}$ coincide with trajectories of $X_{0}$. That is, an equation
of a leaf of $\fol{X_{0}}$ is given by 
\begin{align*}
H_{*}\left(x,y\right) & =\cst.
\end{align*}
Yet, because $\widehat{H}$ is multivalued when $\mu\notin\zz$, some
care needs to be taken; in the sequel we use the determination of
the argument of $u$ on $\cc\backslash\rr_{\geq0}$ to compute the
actual value of $u^{-\mu}$: when crossing the boundary from $\sect[0][\iz]$
to $\sect[0][\zi]$ the value of $u^{-\mu}$ is multiplied with $\ee^{-2\ii\pi\mu}$.
We wish to distribute evenly this change over all sectors $\sect$.
\begin{defn}
\label{def:formal_model_first-integral}Set 
\begin{align*}
\sigma & :=\exp\frac{\ii\pi\mu}{k}.
\end{align*}
We define the \textbf{model }(sectorial) \textbf{first-integrals}
as $\hh[][][0]$ where: 
\begin{align*}
\hh[][][0]\left(u,y\right):= & \sigma^{n}y\times\widehat{H}\left(u\right)\,\,\text{with }n:=\begin{cases}
2j & \text{if }\sharp=\zi\\
2j-1 & \text{if }\sharp=\iz\,\text{and }j>0\\
2k-1 & \text{if }\sharp=\iz\,\text{and }j=0
\end{cases}.
\end{align*}
\end{defn}

The third item of the next lemma is the key property that allows the
rest of the construction to be worked out. The vector field $X_{0}$
has been designed so that it holds.
\begin{lem}
\label{lem:secto_first-int_NF}~
\begin{enumerate}
\item Let $\mathcal{V}$ be a small domain containing $\left(0,0\right)$
and take $M,\widetilde{M}\in\mathcal{V}\cap\ssect$. The identity
$\hh[][][0]_{*}\left(M\right)=\hh[][][0]_{*}\left(\widetilde{M}\right)$
holds if and only if there exists a leaf $\mathcal{L}$ of the restricted
foliation $\fol{X_{0}}|_{\mathcal{V}\cap\ssect}$ such that $M\in\mathcal{L}$
and $\widetilde{M}\in\mathcal{L}$.
\item $\hh[][][0]\left(\ssect\right)=\cc^{\times}$.
\item $\hh[][][0]\left(\ssect[][\pm]\right)$ is a punctured neighborhood
of $0^{\pm1}$. More precisely, there exists $A=A\left(k,\mu\right)>0$
and $c\left(k,\mu\right)>0$ such that if $c>c\left(k,\mu\right)$,
then:
\begin{align*}
\norm[\left(~_{0}^{j}\mathcal{H}^{\sharp}\right)^{\pm1}]{\ssect[][\pm]} & \leq A\ee^{-\nf ck}.
\end{align*}
\end{enumerate}
\end{lem}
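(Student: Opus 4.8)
The strategy is to unwind the explicit formula for $\widehat{H}(u)=u^{-\mu}\exp\frac{c(u^{-k}+u^{k})}{k}$ on each infinite sector $\sect$ and track how its image behaves. For item (1), I would start from the already-established identity $X_0\cdot H_*=0$: this shows $\hh[][][0]_*$ is constant along trajectories of $X_0$, so the ``if'' direction is immediate once one checks that the multivalued ambiguity of $u^{-\mu}$ has been frozen by restricting to a single sector $\ssect$ (this is exactly the point of the sectorial rescaling $\sigma^n$ in Definition~\ref{def:formal_model_first-integral}: on each $\ssect$ the function $\hh[][][0]$ is single-valued and holomorphic). For the ``only if'' direction, I would argue that $\hh[][][0]$ is a \emph{submersion} on $\ssect$ away from $\{xy=0\}$ — compute $\dd\hh[][][0]$ and note it is nonvanishing since $y\mapsto y$ contributes a $\dd y$ term and $\widehat H(u)$ never vanishes — so its fibers are exactly $1$-dimensional leaves; since a leaf of $\fol{X_0}|_{\mathcal V\cap\ssect}$ through $M$ is connected and contained in a single fiber, and the fiber is connected (on the convex-in-suitable-coordinates sector), $M$ and $\widetilde M$ lying in the same fiber forces them onto a common leaf.

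For item (2), I would pass to the ``straightening'' coordinate on the infinite sector: writing $w:=\frac{c(u^{-k}+u^{k})}{k}-\mu\log u$ (a primitive of $\frac{\dd H}{H}$ up to the $y$-factor, well-defined on each $\sect$ since $\sharp$ picks one determination of $\log u$), the map $u\mapsto w$ is a biholomorphism from $\sect$ onto a domain of $\cc$ containing a full right (or left) half-plane, because on the $5\pi/(8k)$-wide sector one of $u^{-k}$ or $u^{k}$ dominates and sends the sector onto (a set containing) a half-plane $\{\re{\cdot}>\text{const}\}$. Then $\widehat H(\sect)=\exp(w(\sect))$ contains $\exp$ of a half-plane, which is all of $\cc^\times$; tensoring with the surjective-onto-$\cc$ factor $y\in\{|y|<2\}$ (resp.\ the annulus $1<|y|<2$ for $\sharp=\iz$) still gives $\hh[][][0](\ssect)=\cc^\times$, since multiplying a full punctured plane by any nonzero number is again the punctured plane.

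For item (3) — the crucial estimate — I restrict to the intersection $\ssect[][\pm]$, which in the $u$-variable is a \emph{narrow} sector straddling the appropriate ray where neither the argument of $u$ nor of $u^{-1}$ is near $0$; there $\re{u^{-k}+u^{k}}$ is bounded \emph{below} by a positive constant times $|u|^{-k}+|u|^{k}\ge 2$ uniformly, hence on $\sect[][-]$ (where $|u|$ is small, say) one gets $\re{u^{-k}}\ge \delta|u|^{-k}$, so $|\widehat H(u)|=|u|^{-\re\mu}e^{-\im\mu\arg u}\exp(\tfrac ck\re{u^{-k}+u^k})$; wait — I must be careful about signs. The point is that $\sec{}[][\pm]$ is the intersection where $\hh[][][0]$ is \emph{small}, i.e.\ $\re{u^{\mp k}}$ is large and the $\mp1$ power makes the exponent negative: I choose the determination so that $(\hh[][][0])^{\pm1}=\sigma^{\pm n}y^{\pm1}u^{\mp\mu}\exp(\mp\tfrac ck(u^{-k}+u^k))$, and on $\ssect[][\pm]$ the dominant term in the exponent is $\mp\tfrac ck\cdot(\text{something}\ge 1)$, so $|(\hh[][][0])^{\pm1}|\le |y|^{\pm1}|u|^{\mp\re\mu}e^{|\im\mu|\pi/k}\,e^{-c/k}\cdot e^{O(1)}$. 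Absorbing the bounded prefactors (using $|y|<2$, and that $|u|$ ranges over a bounded range on which $|u|^{\mp\re\mu}$ is controlled — here one may shrink $\mathcal V$, but the estimate is stated on the infinite sectors so really one uses that on $\sec{}[][\pm]$ the two terms $u^{-k},u^k$ together keep $\re{u^{-k}+u^k}$ bounded below by a constant independent of $|u|$) into a single constant $A=A(k,\mu)$ gives the claimed bound $\norm[(\hh[][][0])^{\pm1}]{\ssect[][\pm]}\le A e^{-c/k}$ for $c$ larger than some threshold $c(k,\mu)$; the statement that the image is a \emph{punctured} neighborhood of $0^{\pm1}$ then follows by combining this sup-bound with item (2)'s surjectivity argument applied near the relevant boundary ray, or more directly by noting $w\mapsto e^{\mp w}$ maps the half-plane image of $\sec{}[][\pm]$ onto a punctured disc.

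The main obstacle I anticipate is item (3): one has to choose the \emph{geometry} of the narrow intersection sectors $\sec{}[][\pm]$ precisely so that on them $\re{u^{-k}+u^k}$ has a \emph{definite sign with the right magnitude} — this is the ``key property the vector field $X_0$ has been designed so that it holds'' mentioned before the lemma, and it is exactly where the twist parameter $c$ enters to shrink the orbit space. The bookkeeping of the $2k$-fold rotation (the factors $\sigma^n$ and the special case $j=0$, $\sharp=\iz$ carrying the full monodromy $\sigma^{2k-1}$ so that $\ee^{-2\ii\pi\mu}$ is spread evenly) must be matched against the boundary identifications of the $\sect$, and one must verify the estimate is uniform in the index $j$ — which it is, by the rotational symmetry $u\mapsto \ee^{\ii\pi/k}u$ of the construction. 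Everything else is elementary complex analysis on sectors.
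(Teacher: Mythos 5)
The weak point is item~1 (the ``only if'' direction), and it propagates into your item~2. You reduce the claim to the assertion that each fiber of $\hh[][][0]$ meets $\mathcal{V}\cap\ssect$ in a connected set, but that assertion \emph{is} the nontrivial content of the statement, and the parenthetical appeal to convexity does not prove it: a level set $\{\hh[][][0]=h\}$ is the graph $y=h/(\sigma^{n}\widehat{H}(u))$ over the portion of the $u$-sector where the constraint on $|y|$ is satisfied, and whether that trace is connected is governed by the sublevel sets of $|\widehat{H}|$, i.e.\ by the behavior of $\re{u^{k}+u^{-k}}$ on $\sect$. Here the model genuinely differs from the classical Martinet--Ramis integral $\exp(u^{-k}/k)$: the function $u^{k}+u^{-k}$ has critical points exactly on the unit circle, and some of them lie inside every infinite sector $\sect$. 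This is precisely why the paper, after citing Martinet--Ramis for the standard model, shrinks $\mathcal{V}$ so that $\norm[u]{\mathcal{V}}<1$ before transferring the argument; a submersion argument (fibers are smooth curves) cannot detect this, and without the restriction on $|u|$ the fiber-connectedness you assert is unjustified. The same critical points invalidate your item~2 as written: $u\mapsto w=\frac{c}{k}(u^{-k}+u^{k})-\mu\log u$ has vanishing derivative near the $2k$-th roots of unity lying in $\sect$, so it is not a biholomorphism of the full infinite sector; moreover the exponential of a half-plane is a punctured disc or a neighborhood of $\infty$, not $\cc^{\times}$, and on the $\ssect[][\iz]$-type sectors, where $1<|y|<2$, multiplying such an image by $y$ does not fill $\cc^{\times}$. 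Surjectivity should instead exploit, as the paper does, the essential singularity of $\widehat{H}$ at $0$ together with the range swept by $u^{k}+u^{-k}$ on the sector, so that $\widehat{H}$ already attains values of every sufficiently small and every sufficiently large modulus with all arguments.

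Your item~3, by contrast, is essentially the paper's argument: on the narrow intersection sectors $\sect[][\pm]$ the arguments of $u^{\pm k}$ stay within $\pi/8$ of $0$ or $\pi$, so $\re{u^{k}+u^{-k}}$ has a definite sign and modulus at least $2\cos(\pi/8)>1$ uniformly in $|u|$, the resulting double-exponential decay absorbs the polynomial factor $|u|^{\mp\re{\mu}}$, and the $y$-factor is controlled by $1<|y|<2$ on the intersections. The paper outsources the $u$-estimate to \cite[Corollary 4.5]{TeySphere} and handles $y$ exactly as you propose; once your sign bookkeeping is fixed, this part goes through.
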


\begin{proof}
Most of the assertions can be found in~\cite{MaRa-Res}. Especially~1.
is proved in~\cite[p593, p598]{MaRa-Res} for the usual first-integral
$\widehat{H}_{0}\left(x,y\right):=yu_{*}^{-\mu}\exp\nf{u_{*}^{-k}}k$
of the standard model $\widehat{X_{0}}$ (see~(\ref{eq:std_formal_NF})).
Using the fact that the critical points of $u^{k}\mapsto u^{k}+u^{-k}$
lie on the unit circle in the $u$-variable, we deduce our claim by
requiring that the size of $\mathcal{V}$ be so small as to ensure
$\norm[u]{\mathcal{V}}<1$. Item~2. comes from the fact that $u\in\sect\longmapsto\widehat{H}\left(u\right)$
admits an essential singularity at $0$ while the values reached by
$u^{k}+u^{-k}$ cover a punctured neighborhood of $0$.

The estimates appearing in~3. follow from elementary calculus. In~\cite[Corollary 4.5]{TeySphere}
a value for $c\left(k,\mu\right)$ is determined and a bound $\norm[\widehat{H}]{\sect[][\pm]}^{\pm1}<\mathfrak{m}\ee^{-\nf c{2k}}$
is proved for some explicit $\mathfrak{m}=\mathfrak{m}\left(k,\mu\right)$.
The cited paper deals with the case $k=1$ and the factor $u^{-\mu}$
is slightly different, but the general case follows from what has
been carried out there; details are left to the reader. If $\left|y\right|\geq1$
and $u\in\cc$, then 
\begin{align*}
\left|\frac{1}{\hh[][][0]\left(u,y\right)}\right| & =\frac{1}{\left|y\right|}\left|\sigma^{n}\widehat{H}\left(u\right)\right|^{-1}\leq\left|\sigma\right|^{-n}\left|\widehat{H}\left(u\right)\right|^{-1}~,~n\in\left\{ 0,\ldots,2k-1\right\} .
\end{align*}
A similar bound can be established when $\left|y\right|\leq2$ and
$u\in\cc$ since $\left|\hh[][][0]\left(u,y\right)\right|=\left|y\right|\left|\sigma^{n}\widehat{H}\left(u\right)\right|\leq2\left|\sigma\right|^{n}\left|\widehat{H}\left(u\right)\right|$
for some $n\in\left\{ 0,\ldots,2k-1\right\} $. The proof is complete.
\end{proof}
We can interpret~1. of the Lemma by saying that the values $h$ of
$\hh[][][0]_{*}$ provide a natural coordinate on the sectorial orbit
space of $X_{0}$ near $\left(0,0\right)$, and that~2. makes the
sectorial orbit space a punctured sphere $\cbar\backslash\left\{ 0,\infty\right\} $.
The discarded values $0$ and $\infty$ correspond to the two separatrices
$\left\{ xy=0\right\} \backslash\left\{ \left(0,0\right)\right\} $.
Taking~3. into account, we deduce that the orbit space of $X_{0}$
outside $\left\{ xy=0\right\} $ is obtained by identifying the $2k$
successive spheres about their poles by linear maps, since the choices
of determination of $H$ we made over the intersections $\sect[][\pm]$
imply:
\begin{align*}
\left(\forall\left(x,y\right)\in\ssect[j][-]_{*}\right)~~~~~\hh[][\zi][0]_{*}\left(x,y\right) & =\sigma\times\hh[][\iz][0]_{*}\left(x,y\right),\\
\left(\forall\left(x,y\right)\in\ssect[j][+]_{*}\right)~~\hh[j+1][\iz][0]_{*}\left(x,y\right) & =\sigma\times\hh[][\zi][0]_{*}\left(x,y\right).
\end{align*}
J.~\noun{Martinet} and J.-P.~\noun{Ramis} called this configuration
the \emph{chapelet de sphères} (rosary of spheres), which we prefer
to call the \emph{orbital necklace} of $\fol{X_{0}}$ as in~\cite{RT2}.
The orbital modulus of Martinet-Ramis is obtained in the case of a
general resonant foliation by replacing the linear polar identifications
with nonlinear perturbations.

\subsection{Sectorial normalization and sectorial first-integral}

Start with a $1:1$ resonant vector field $X_{R}=X_{0}+RY$ with $R$
holomorphic and $R\left(0,0\right)=0$ (following~\cite{Dulac,Dulac2}
any $1:1$ resonant saddle foliation can be brought into that form
by choosing suitable local analytic coordinates). According to~\cite[Theorem 6.2.1]{MaRa-Res},
there exists a neighborhood $\mathcal{V}$ of $\left(0,0\right)$
and a collection of functions $\mathcal{N}=\left(\obj{\mathcal{N}}\right)$
with $\obj{\mathcal{N}}\in\holb[\left\{ \left(x,y)\right)\in\mathcal{V}~:~u_{*}\in\sect\right\} ]$
such that, if one defines $\obj{\Psi}:=\flow Y{\obj{\mathcal{N}}}{}$,
then 
\begin{align*}
\left(\obj{\Psi}\right)^{*}X_{0} & =X_{R}.
\end{align*}

\begin{rem}
This result is a byproduct of Martinet-Ramis synthesis theorem. We
do not need it in our present study, we simply invoke it for the purpose
of our exposition of their classification. We revisit this assertion
in Section~\ref{sec:Period} by providing it with a more geometric
flavor.
\end{rem}

Because $X_{0}\cdot\hh[][][0]_{*}=0$ we have $X_{R}\cdot\left(\hh[][][0]_{*}\circ\obj{\Psi}\right)=0$,
where $\hh[][][0]_{*}$ is the first-integral of the formal model
$X_{0}$ defined in Section~\ref{subsec:Model_orbit-space}. Let
us describe in more details these sectorial first-integrals of $X_{R}$,
since they provide a natural coordinate on the sectorial leaf space
of $\fol{X_{R}}$.
\begin{lem}
\label{lem:secto_norma}The functions \textup{$\obj{\mathcal{N}}$
satisfy the following properties.}
\begin{enumerate}
\item $\hh[][][0]_{*}\circ\obj{\Psi}=\hh[][][0]_{*}\exp\obj{\mathcal{N}}$. 
\item $X_{R}\cdot\obj{\mathcal{N}}=-R$.
\item If $X_{R}$ is in normal form then $\obj{\mathcal{N}}=\obj N_{*}$
for some sectorial function in $\left(u,y\right)$-space.
\end{enumerate}
\end{lem}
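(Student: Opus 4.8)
The plan is to establish the three items in order; (1) and~(2) are short computations resting on the single fact that $Y=-px\pp x+qy\pp y$ annihilates the resonant monomial, $Y\cdot u=0$, whereas (3) carries the real content. Throughout recall $p=q=1$. For~(1) I would substitute directly: $\obj{\Psi}=\flow Y{\obj{\mathcal N}}{}$ maps $\left(x,y\right)$ to $\left(x\exp\left(-\obj{\mathcal N}\right),y\exp\obj{\mathcal N}\right)$ and, since $Y\cdot u=0$, leaves $u$ literally fixed, $u\circ\obj{\Psi}=u_{*}$; hence in $\hh[][][0]_{*}=\sigma^{n}\,y\,\widehat{H}\left(u_{*}\right)$ the factors $\sigma^{n}$ and $\widehat{H}\left(u_{*}\right)$ are untouched --- the same branch of $\widehat{H}$ being used, as $u$ does not move --- while $y$ gets multiplied by $\exp\obj{\mathcal N}$, which is exactly $\hh[][][0]_{*}\circ\obj{\Psi}=\hh[][][0]_{*}\exp\obj{\mathcal N}$. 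For~(2) I would feed~(1) into the identity $X_{R}\cdot\left(\hh[][][0]_{*}\circ\obj{\Psi}\right)=0$ recalled just before the statement: the Leibniz rule, together with $X_{0}\cdot\hh[][][0]_{*}=0$ and the elementary identity $Y\cdot\hh[][][0]_{*}=\hh[][][0]_{*}$ (again from $Y\cdot u=0$ and $Y\cdot y=y$), turns it into $\left(\hh[][][0]_{*}\exp\obj{\mathcal N}\right)\left(X_{R}\cdot\obj{\mathcal N}+R\right)=0$; since the first factor is holomorphic and vanishes only along $\left\{ y=0\right\}$, dividing it out yields $X_{R}\cdot\obj{\mathcal N}=-R$.

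For item~(3) the starting point is that, with $X_{R}=X_{0}+RY$ in normal form, $X_{R}\cdot u=u_{*}^{k+1}$ and $X_{R}\cdot y=y\left(c\left(1-u_{*}^{2k}\right)+\mu u_{*}^{k}+R\right)$ depend on $\left(x,y\right)$ only through $\left(u_{*},y\right)$; hence $X_{R}$ is $\iota$-related to the holomorphic vector field $\widetilde{X}_{R}:=u^{k+1}\pp u+y\left(c\left(1-u^{2k}\right)+\mu u^{k}+R\left(u,y\right)\right)\pp y$ on the $\left(u,y\right)$-plane, that is $\tx D\iota\cdot X_{R}=\widetilde{X}_{R}\circ\iota$. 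This $\widetilde{X}_{R}$ is a saddle-node at the origin of Poincar\'e rank $k$, having the invariant axes $\left\{ u=0\right\}$ and $\left\{ y=0\right\}$ as separatrices, with formal model the descent $\widetilde{X}_{0}$ of $X_{0}$ and first integral $y\widehat{H}\left(u\right)$. Applying the Martinet--Ramis sectorial normalization for saddle-nodes~\cite{MaRa-SN} over each sectorial domain $\ssect$ --- which, by the construction of Section~\ref{subsec:Model_orbit-space}, are precisely the $2k$ sectors in $u$ adapted to the rank $k$ --- produces a bounded holomorphic $\obj N$, normalized to be tangent to the identity along both separatrices, with $\widetilde{X}_{R}\cdot\obj N=-R$. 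Then $\obj N_{*}:=\obj N\circ\iota$ satisfies $X_{R}\cdot\obj N_{*}=\left(\widetilde{X}_{R}\cdot\obj N\right)\circ\iota=-R$ on $\ssect_{*}$ by $\iota$-relatedness, and vanishes on $\iota^{-1}\left(\left\{ uy=0\right\} \right)=\left\{ xy=0\right\}$; in fact $\flow Y{\obj N_{*}}{}$ conjugates $X_{0}$ to $X_{R}$, since $\iota\circ\flow Y{\obj N_{*}}{}=\flow{\widetilde{Y}}{\obj N}{}\circ\iota$ with $\widetilde{Y}=y\pp y$ the descent of $Y$.

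It then remains to identify $\obj{\mathcal N}$ with $\obj N_{*}$, which I would do by a uniqueness argument. Both functions solve $X_{R}\cdot F=-R$ on (the common part of) $\ssect_{*}$ and vanish on $\left\{ xy=0\right\}$ --- the sectorial normalizers of~\cite{MaRa-Res} being likewise taken tangent to the identity along the separatrices --- so their difference $D$ is a first integral of $\fol{X_{R}}|_{\ssect_{*}}$. By item~(1) and Lemma~\ref{lem:secto_first-int_NF}, whose items~1 and~2 transport through the conjugacy $\obj{\Psi}$, the function $g:=\hh[][][0]_{*}\circ\obj{\Psi}=\hh[][][0]_{*}\exp\obj{\mathcal N}$ is a holomorphic coordinate on the leaf space of $\fol{X_{R}}|_{\ssect_{*}}$ which surjects onto $\cc^{\times}$, so $D=\theta\circ g$ for some holomorphic $\theta$ on $\cc^{\times}$. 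Now $g$ reaches the two punctures $0$ and $\infty$ of $\cc^{\times}$ only as $\left(x,y\right)$ approaches $\left\{ xy=0\right\}$ --- the essential singularity of $\widehat{H}$ at $u=0$ being responsible for this --- and $D$ tends to $0$ there; hence $\theta$ extends holomorphically to $\proj$ with $\theta\left(0\right)=\theta\left(\infty\right)=0$, and being bounded holomorphic on $\proj$ it vanishes identically. Therefore $D\equiv0$, i.e. $\obj{\mathcal N}=\obj N_{*}=\left(\obj N\right)_{*}$, which is item~(3).

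The real obstacle is item~(3), and within it two points demand care. First, one must verify that the Martinet--Ramis sectorial normalizers --- both for the resonant saddle $X_{R}$ in~\cite{MaRa-Res} and for the descended saddle-node $\widetilde{X}_{R}$ in~\cite{MaRa-SN} --- are indeed the ones normalized to be tangent to the identity along the separatrices, since this is exactly the normalization that makes the uniqueness argument bite. Second, the sectors $\ssect$ are fibered over the disc $\left\{ \left|y\right|<2\right\}$ and are not neighborhoods of $\left(0,0\right)$; moreover, once cut down to the domain where $\obj{\mathcal N}$ actually lives, they become cusp-like regions clustering near $\left\{ xy=0\right\}$ in the $\left(u,y\right)$-picture. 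Hence the assertions that ``$\obj N$ extends holomorphically across the separatrices, vanishing there'' and that ``$g$ surjects onto $\cc^{\times}$, reaching its punctures only near $\left\{ xy=0\right\}$'' must be checked against the precise shape of these domains; this is the bookkeeping the full proof has to supply.
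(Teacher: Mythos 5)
Items 1 and 2 of your proposal are exactly the paper's computations (flow of $Y$ fixes $u$, hence multiplies $y$ by $\exp\obj{\mathcal{N}}$; then logarithmic differentiation using $X_{0}\cdot\hh[][][0]_{*}=0$ and $Y\cdot\hh[][][0]_{*}=\hh[][][0]_{*}$), and they are correct. For item 3 you take a genuinely different route. The paper does not argue this point intrinsically at all: its proof is a pointer to Corollary~\ref{cor:orbital_saddle}~1, i.e.\ it reads ``$X_{R}$ in normal form'' as ``$X_{R}$ produced by the synthesis of Section~\ref{sec:Orbital_realization}'', where the sectorial normalizing data is \emph{by construction} a collection $N\in\mathcal{A}$ of functions of $\left(u,y\right)$ on the full fibered sectors $\ssect$, so that one may simply take $\obj{\mathcal{N}}:=\obj N_{*}$. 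You instead descend $X_{R}$ along $\iota$ to the saddle-node dual to~(\ref{eq:NF_u-y}), apply the classical saddle-node sectorial normalization in the $\left(u,y\right)$ variables, and identify the pullback with $\obj{\mathcal{N}}$ by a Liouville-type uniqueness argument for bounded sectorial first integrals (difference factors through the leaf coordinate, whose image is $\cc^{\times}$, and has vanishing limits at the punctures). This is a legitimate and more self-contained argument, and it proves something slightly stronger (an exact identification rather than ``can be so chosen''); its price is precisely the two points you flag, which are not mere bookkeeping in this paper's setting.

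Two caveats deserve explicit mention. First, $\obj{\mathcal{N}}$ is only determined up to composition with sectorial symmetries of $X_{0}$, i.e.\ up to adding a bounded first integral $\theta\circ\hh[][][0]$; your exact identity $\obj{\mathcal{N}}=\obj N_{*}$ therefore only holds once both sides are pinned down by the asymptotic-to-the-formal-transform normalization (whose $u^{0}$-coefficient is a constant that must be set to $0$), and the vanishing of $D$ near $\left\{ xy=0\right\} $ as well as the surjectivity of the leaf coordinate must be checked on the shrunk domain $\mathcal{V}\cap\ssect_{*}$ (both do hold, the latter via the essential singularity of $\widehat{H}$ at $u=0$). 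Second, and more substantially, the saddle-node sectorial normalization only furnishes $\obj N$ for $\left|y\right|$ small, whereas the paper's sectors $\ssect[][\zi]$ reach $\left|y\right|<2$ and the $\ssect[][\iz]$ live over the annulus $1<\left|y\right|<2$; so your $\obj N$ is not an element of $\mathcal{A}$ without an additional extension argument in $y$. This is exactly what the paper's downstream use of the statement requires, and why it proves item 3 by pointing at its own construction rather than at Martinet--Ramis; as a proof of the lemma as literally stated (existence of \emph{some} sectorial function of $\left(u,y\right)$ on the common local domain), your argument goes through.
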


\begin{proof}
~
\begin{enumerate}
\item Because $\obj{\Psi}\left(x,y\right)$ is given by the flow of $Y$
starting from $\left(x,y\right)$ and with time $\obj{\mathcal{N}}\left(x,y\right)$,
we have:
\begin{align*}
\obj{\Psi}\left(x,y\right) & =\left(x\exp\left(-\obj{\mathcal{N}}\left(x,y\right)\right),y\exp\left(\obj{\mathcal{N}}\left(x,y\right)\right)\right).
\end{align*}
In particular $u\circ\obj{\Psi}=u$, which gives the conclusion.
\item From the identities
\begin{eqnarray*}
Y\cdot u & = & 0\\
Y\cdot y & = & y,
\end{eqnarray*}
we derive the fact that $Y\cdot\hh[][][0]_{*}=\hh[][][0]_{*}$. Since
$X_{0}\cdot\hh[][][0]_{*}=0$ we conclude, by taking logarithmic derivatives:
\begin{eqnarray*}
\frac{\obj X\cdot\hh[][][N]}{\hh[][][N]} & = & \obj X\cdot\left(\log\hh[][][0]_{*}+\obj N\right)\\
 & = & R+\obj X\cdot\obj N=0
\end{eqnarray*}
as required.
\item See Corollary~\ref{cor:orbital_saddle}~1.
\end{enumerate}
\end{proof}
Hence, we are led to give the following definition.
\begin{defn}
\label{def_first-integral}~
\begin{enumerate}
\item Define the Banach space of functions in the variables $\left(u,y\right)$:
\begin{eqnarray*}
\obj{\mathcal{A}} & := & \holb[{\ssect}],
\end{eqnarray*}
as well as the product algebra $\mathcal{A}:=\prod_{j,\sharp}\obj{\mathcal{A}}$.
\item Let $N\in\mathcal{A}$ be a collection of $2k$ sectorial, bounded
and holomorphic functions. We define 
\begin{eqnarray*}
\hh & := & \hh[][][0]\times\exp\obj N\,.
\end{eqnarray*}
 The collection $\left(\hh_{*}\right)$ is called the \textbf{canonical
sectorial first-integral}\emph{ }associated to $N$.
\end{enumerate}
\end{defn}

Because of the choices made for $\hh[][][0]$, we have
\begin{align*}
\hh & \in\holf[{\ssect}].
\end{align*}
It is straightforward to show that the conclusions of Lemma~\ref{lem:secto_first-int_NF}
hold for these first-integrals but for the presence of the perturbations
$\obj N$. For the sake of brevity, the next proposition is only written
down for foliations in normal form~(\ref{eq:analytic_NF}), but it
can be adapted in a straightforward manner to encompass the case of
a general resonant vector field $X_{0}+RY$. We leave the details
to the reader.
\begin{prop}
\label{prop:secto_first-int}Let $N\in\mathcal{A}$.
\begin{enumerate}
\item Let $\mathcal{V}$ be a small domain containing $\left(0,0\right)$
and take $M,\widetilde{M}\in\mathcal{V}\cap\ssect$. Assume that $N$
comes from a resonant foliation $\fol{X_{R}}$ as in Lemma~\ref{lem:secto_norma}.
The relation $\hh_{*}\left(M\right)=\hh_{*}\left(\widetilde{M}\right)$
holds if and only if there exists a leaf $\mathcal{L}$ of the restricted
foliation $\fol{X_{R}}|_{\ssect}$ such that $M\in\mathcal{L}$ and
$\widetilde{M}\in\mathcal{L}$.
\item $\hh\left(\ssect\right)=\cc^{\times}$.
\item $\hh[][][0]\left(\ssect[][\pm]\right)$ is a punctured neighborhood
of $0^{\pm1}$. More precisely, there exists $A=A\left(k,\mu\right)>0$
and $c\left(k,\mu\right)>0$ such that if $c>c\left(k,\mu\right)$,
then:
\begin{align*}
\norm[\left(~_{N}^{j}\mathcal{H}^{\sharp}\right)^{\pm1}]{\ssect[][\pm]} & \leq A\ee^{-\nf ck+\norm[N]{\ssect}}.
\end{align*}
\end{enumerate}
\end{prop}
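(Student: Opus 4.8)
The plan is to reduce everything to Lemma~\ref{lem:secto_first-int_NF} by treating $\obj N$ as a bounded perturbation of the model first-integral $\hh[][][0]$, exactly as the statement suggests. First I would establish item~2. Since $\hh = \hh[][][0]\exp\obj N$ with $\obj N$ holomorphic and bounded on $\ssect$, the factor $\exp\obj N$ is a nonvanishing holomorphic unit on $\ssect$, so $\hh$ never vanishes and $\hh(\ssect)\subseteq\cc^\times$. For surjectivity onto $\cc^\times$, I would argue that $\hh[][][0]$ already covers $\cc^\times$ (Lemma~\ref{lem:secto_first-int_NF}.2) and that multiplication by a bounded unit cannot destroy surjectivity: fixing $y$ with $1<\left|y\right|<2$ and using that $u\mapsto\widehat H(u)$ has an essential singularity at $0$ inside each $\sect$, the Great Picard theorem (or directly the fact that $u^k+u^{-k}$ covers a punctured neighbourhood of every point) gives that $u\mapsto\hh(u,y)$ still attains every value of $\cc^\times$; alternatively one invokes openness together with the fact that $\log\obj N$ is bounded while $\log\hh[][][0]$ is unbounded in every direction, so the image is both open and closed in $\cc^\times$, hence all of $\cc^\times$.

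Item~3 is the quantitative heart and follows by combining the estimate of Lemma~\ref{lem:secto_first-int_NF}.3 with the trivial bound $\left|\exp\obj N\right|^{\pm1}\leq\exp\norm[\obj N]{\ssect}\leq\exp\norm[N]{\ssect}$ on $\ssect[][\pm]\subset\ssect$, giving immediately
\[
\norm[(~_{N}^{j}\mathcal{H}^{\sharp})^{\pm1}]{\ssect[][\pm]}\le \norm[(~_{0}^{j}\mathcal{H}^{\sharp})^{\pm1}]{\ssect[][\pm]}\,\ee^{\norm[N]{\ssect}}\le A\,\ee^{-\nf ck+\norm[N]{\ssect}},
\]
with the same $A=A(k,\mu)$ and threshold $c(k,\mu)$. (Note the statement as printed writes $\hh[][][0]$ on the left of item~3, which should read $\hh$; with that correction the bound is exactly what the computation yields.)

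Item~1 is the part I expect to be the main obstacle, because it is where the dynamical content lives rather than mere estimates. The strategy is to transport the corresponding statement for the model $X_0$ (Lemma~\ref{lem:secto_first-int_NF}.1) through the sectorial normalizing maps $\obj\Psi=\flow Y{\obj{\mathcal N}}{}$ supplied by Lemma~\ref{lem:secto_norma}. Concretely, by Lemma~\ref{lem:secto_norma}.1 we have $\hh[][][0]_*\circ\obj\Psi=\hh[][][0]_*\exp\obj{\mathcal N}=\hh_*$ on a suitable small $\mathcal V\cap\ssect$, so $\hh_*=\hh[][][0]_*\circ\obj\Psi$; since $\obj\Psi$ is a biholomorphism preserving the resonant monomial $u$ and conjugating $\fol{X_R}$ to $\fol{X_0}$, it maps leaves of $\fol{X_R}|_{\ssect}$ bijectively onto leaves of $\fol{X_0}|_{\ssect}$. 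Therefore $\hh_*(M)=\hh_*(\widetilde M)$ iff $\hh[][][0]_*(\obj\Psi(M))=\hh[][][0]_*(\obj\Psi(\widetilde M))$ iff (by Lemma~\ref{lem:secto_first-int_NF}.1) $\obj\Psi(M)$ and $\obj\Psi(\widetilde M)$ lie on a common leaf of $\fol{X_0}|_{\ssect}$ iff $M$ and $\widetilde M$ lie on a common leaf of $\fol{X_R}|_{\ssect}$. The care points I would address are: shrinking $\mathcal V$ so that $\obj\Psi$ is defined and univalent on $\mathcal V\cap\ssect$ and so that Lemma~\ref{lem:secto_first-int_NF}.1 applies to the image (using $\norm[u]{\mathcal V}<1$, which is preserved since $u\circ\obj\Psi=u$), and checking that "lies on a leaf of the restricted foliation" is genuinely preserved — i.e. that $\obj\Psi$ does not glue together pieces of distinct leaves, which holds because it is a biholomorphism of $\mathcal V\cap\ssect$ onto its image. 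Since the present use is purely expository, I would then remark (as the text does) that the same argument, carried out with the general $\obj{\mathcal N}$ attached to $X_0+RY$ rather than only the normal-form case, gives the analogous statement, and leave those routine adaptations to the reader.
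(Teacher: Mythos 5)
Your proposal is correct and takes essentially the same route as the paper, which in fact offers no separate proof of Proposition~\ref{prop:secto_first-int}: it merely notes that the conclusions of Lemma~\ref{lem:secto_first-int_NF} persist in the presence of the bounded perturbations $\exp\obj N$, which is exactly the reduction you carry out (item~3 by the trivial bound on $\exp\left(\pm\obj N\right)$, item~1 by transporting leaves through the sectorial normalizations $\obj{\Psi}$ of Lemma~\ref{lem:secto_norma}). One cosmetic caveat: for item~2 the Great Picard theorem does not literally apply on a sector (the singularity at $u=0$ is not isolated) and closedness of the image in $\cc^{\times}$ is not obvious, so you should rely on your other suggested argument --- the one the paper itself uses for the model --- namely that $u^{k}+u^{-k}$ on each sector of opening $\frac{5\pi}{4k}$ reaches, modulo $\frac{2\ii\pi k}{c}\zz$, every sufficiently large value, so $\widehat{H}\exp\obj N$ still attains every value of $\cc^{\times}$.
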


\subsection{Orbital necklaces and Martinet-Ramis orbital modulus}

\begin{figure}[H]
\hfill{}\includegraphics[width=0.5\columnwidth]{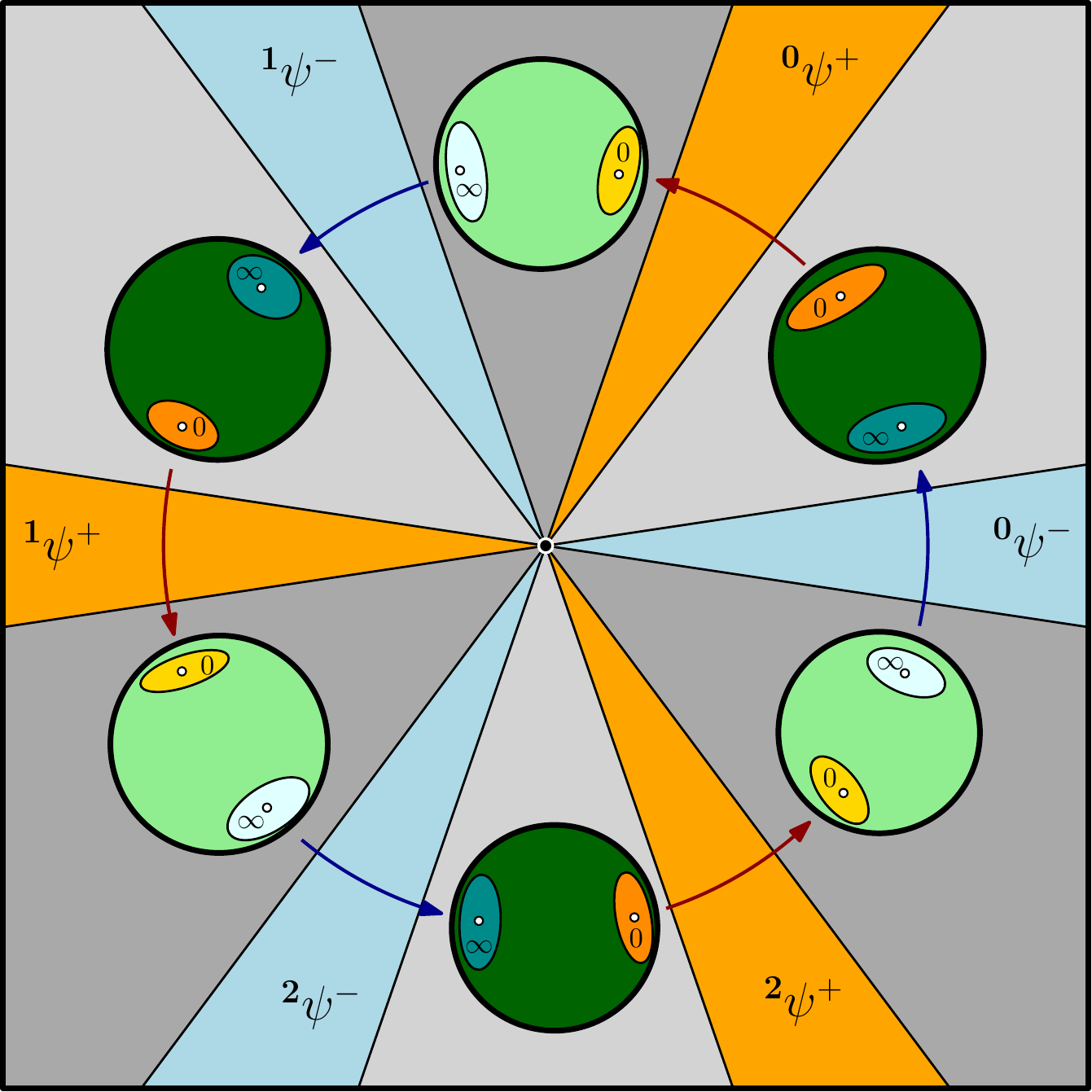}\hfill{}

\caption{An orbital necklace.}

\end{figure}

\begin{defn}
\label{def:MR_mod}~
\begin{enumerate}
\item An \textbf{orbital necklace} of order $k$ is the manifold (in general,
non-Hausdorff) obtained by gluing $2k$ Riemann spheres $\cbar$ near
their poles $0$ and $\infty$ by members of a collection of transition
maps $\left(\obj[][\pm]{\psi}\right)$, consisting in $2k$ germs
of a diffeomorphism near $0^{\pm1}$, one after the other in the circular
order on the indices given by Definition~\ref{def_sectors}.
\item We can choose a linear coordinate on each sphere in such a way that
each $\obj[][\pm]{\psi}$ is tangent to the identity but the last
one $\obj[k-1][+]{\psi}$, which in turn is tangent to a linear map
$h\mapsto\ee^{2\ii\pi\alpha}h$ for some $\alpha\in\quot{\cc}{\zz}$.
Let us call $\alpha$ the \textbf{residue} of the necklace.
\item The \textbf{Martinet-Ramis orbital modulus} of (a germ of) a $1:1$
resonant foliation $\fol{}$ is the orbital necklace defined by $\left(\obj[][\pm]{\psi}\right)$
obtained in the following way. Choose a neighborhood $\mathcal{V}$
of $\left(0,0\right)$ on which $\fol{}$ admits a holomorphic representative.
Pick a point $M\in\ssect[][\pm]$; depending on the considered intersection
$\ssect[][+]$ or $\ssect[][-]$, and according to Proposition~\ref{prop:secto_first-int},
there exist unique corresponding values $h^{\zi},h^{\iz}\in\cc$ of
the respective sectorial first-integrals. Set
\begin{align}
\obj[][-]{\psi}\left(h^{\iz}\right):=h^{\zi}\text{ or } & \obj[][+]{\psi}\left(h^{\zi}\right):=h^{\iz}.\label{eq:gluing_MR_mod}
\end{align}
\end{enumerate}
\end{defn}

\begin{rem}
~
\begin{enumerate}
\item By making another choice of the spherical coordinates we may assume
that this residue is distributed evenly between all transition mappings
$\obj{\psi}$, as explained below for Martinet-Ramis modulus.
\item Following Proposition~\ref{prop:secto_first-int}~1., the mapping
$h\mapsto\obj[][\pm]{\psi}\left(h\right)$ coming form a resonant
foliation is injective on the domain $\hh\left(\ssect[][\pm]\right)$.
Moreover, the choice of the leaf coordinate $\hh[][][0]$ (and of
the nonzero number $\sigma$) made in Definition~\ref{def:formal_model_first-integral}
implies
\begin{align}
\obj[][+]{\psi}\left(h\right) & =h\left(\sigma+\OO h\right)\label{eq:orbital_necklace_identifications}\\
\obj[][-]{\psi}\left(h\right) & =h\left(\sigma+\OO{\nf 1h}\right),\nonumber 
\end{align}
 so that $\mu$ is the residue of the corresponding orbital necklace
and it is distributed evenly.
\end{enumerate}
\end{rem}

By construction if $\mathcal{O}$ is an orbital necklace, then $\diff[\mathcal{O}]\simeq\GL 1{\cc}\times\zsk\times\zsk[2]$.
Indeed, once the obvious action of $\zsk\times\zsk[2]$ by translation
of the indices $j\mapsto j+\theta$ and exchange of polarity $+\leftrightarrow-$
is factored out, an element $\psi\in\diff[\mathcal{O}]$ must define
an automorphism of each sphere fixing both poles. Each such mapping
takes the form $h\mapsto ah$, and the nonzero constant $a$ must
be the same on each sphere for the global conformal structure to be
preserved. We are now ready to state the following fundamental classification
theorem upon which the present work is based.
\begin{namedthm}[Martinet-Ramis Theorem~\cite{MaRa-Res}]
The space of all equivalence classes (up to local analytic changes
of coordinates) of germs of a $1:1$ resonant saddle foliation belonging
to a given formal class $\left(k,\mu\right)$, is isomorphic to the
space of orbital necklaces of order $k$ and residue $\mu$ up to
the action of $\GL 1{\cc}\times\zsk\times\zsk[2]$.
\end{namedthm}

\section{\label{sec:Orbital_realization}Building a resonant saddle with prescribed
orbital necklace}

With hindsight, the heuristic for building the orbital modulus as
an orbital necklace seems rather natural: the analytic class of a
foliation is determined by the conformal structure of its leaf space.
What is less intuitive to grasp is how to solve the inverse problem
(or synthesis problem, or realization problem) for foliations.
\begin{problem*}
Being given an (abstract) orbital necklace, to prove that it comes
from a $p:q$ resonant foliation.
\end{problem*}
What J.~\noun{Martinet} and J.-P.~\noun{Ramis} did to solve the
orbital inverse problem was to use a powerful geometric black-box,
Newlander-Niremberg theorem, at the expense of loosing their grip
on an explicit realization. We propose here a process which is rather
explicit and use simple analytic ingredients. Before delving into
the details of the construction, let us first outline how it is performed
by following the strategy used in~\cite{SchaTey,RT2} and how the
twist parameter intervenes as in~\cite{TeySphere}. Here again we
perform the construction for $1:1$ resonant saddles, the minor differences
with the general $p:q$ case are being explained in Section~\ref{subsec:Realization_p:q}.

\bigskip{}

First, we express the transition maps provided by an orbital necklace
$\left(\obj[][\pm]{\psi}\right)$ as a collection of logarithmic data
$\left(\obj[][\pm]{\varphi}\right)$ consisting in $2k$ germs of
a holomorphic function near $0^{\pm1}$ such that: 
\begin{eqnarray}
\obj[][\pm]{\psi}\left(h\right) & = & \sigma h\exp\obj[][\pm]{\varphi}\left(h\right)\,\,\,\,,\,\obj[][\pm]{\varphi}\left(0^{\pm1}\right)=0.\label{eq:orbit_invar_expo}
\end{eqnarray}
Then the gluing conditions~(\ref{eq:gluing_MR_mod}), that are written
over $\ssect[][\pm]$ as:
\begin{align*}
\hh[][\zi]\left(u,y\right) & =\sigma\times\hh[][\iz]\left(u,y\right)\,\,\,\,\,\text{on }\ssect[][-],\\
\hh[j+1][\iz]\left(u,y\right) & =\sigma\times\hh[][\zi]\left(u,y\right)\,\,\,\,\,\text{on }\ssect[][+],
\end{align*}
amounts to solving the nonlinear Cousin problem
\begin{align*}
\begin{cases}
\obj[][\zi]N-\obj[][\iz]N & =\obj[][-]{\varphi}\circ\hh[][\iz]\\
\obj[j+1][\iz]N-\obj[][\zi]N & =\obj[][+]{\varphi}\circ\hh[][\zi]
\end{cases} & \tag{\ensuremath{\star}}
\end{align*}
where $N:=\left(\obj N\right)\in\mathcal{A}$ (Definition~\ref{def_first-integral})
is an unknown collection of sectorial functions. By solving~$\eqtag$
we realize the abstract necklace dynamics as transitions between canonical
first-integrals, yet still abstract at this stage. The fact that these
sectorial first-integrals do come from a concrete holomorphic foliation
is guaranteed by the following lemma.
\begin{lem}
\label{lem:vector_field_reconstruction}Let a logarithmic data $\varphi$
of an orbital necklace be given. Then there exists $c_{O}=c_{O}\left(k,\mu,\varphi\right)>0$
such that for all $c>c_{O}$ the following assertions hold. Assume
that $N$ solves~$\eqtag$ on the sectorial decomposition $\mathcal{V}$
as in Definition~\ref{def_sectors}. 
\begin{enumerate}
\item Each vector field 
\begin{eqnarray*}
\obj X & := & X_{0}-\frac{X_{0}\cdot\obj N_{*}}{1+Y\cdot\obj N_{*}}Y
\end{eqnarray*}
is holomorphic on $\ssect_{*}$ and admits $\hh_{*}$ for first-integral. 
\item The collection $\left(\obj X\right)$ is the restriction to sectors
of a $1:1$ resonant vector field $X$ holomorphic on the domain $\mathcal{V}_{*}$.
\end{enumerate}
\end{lem}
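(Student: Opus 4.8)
The plan is to verify assertion 1 by a direct computation, and assertion 2 by a gluing argument that shows the sectorial vector fields $\obj X$ agree on overlaps $\ssect[][\pm]_*$, hence patch to a single holomorphic vector field on $\mathcal{V}_*$; the removability of the singular locus $\{xy=0\}$ will be handled separately, and this is where the twist parameter $c>c_O$ enters.

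For assertion 1, observe that $\obj X$ is obtained from $X_0$ by adding a multiple of $Y$, namely $\obj X = X_0 + \obj g\, Y$ with $\obj g := -\dfrac{X_0\cdot\obj N_*}{1+Y\cdot\obj N_*}$. First I would check that $\obj g$ is holomorphic on $\ssect_*$: the numerator $X_0\cdot\obj N_*$ is holomorphic there since $\obj N$ is, and the denominator $1+Y\cdot\obj N_*$ is nonvanishing provided the sectors are taken small enough (shrink $\mathcal{V}$ so that $\norm[Y\cdot N]{\ssect}<1$, using boundedness of $N$ and of its derivatives on slightly larger sectors via Cauchy estimates). Then I would verify that $\hh_*$ is a first-integral of $\obj X$. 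By Definition~\ref{def_first-integral}, $\hh = \hh[][][0]\times\exp\obj N$, so $\obj X\cdot\hh_* = \hh_*\big(\obj X\cdot(\log\hh[][][0]_* + \obj N_*)\big)$. Now $X_0\cdot\hh[][][0]_*=0$ and $Y\cdot\hh[][][0]_* = \hh[][][0]_*$ (shown exactly as in the proof of Lemma~\ref{lem:secto_norma}, from $Y\cdot u=0$ and $Y\cdot y=y$), hence $\obj X\cdot\log\hh[][][0]_* = \obj g$. Combining, $\obj X\cdot(\log\hh[][][0]_*+\obj N_*) = \obj g\,(1+Y\cdot\obj N_*) + X_0\cdot\obj N_* = 0$ by the very definition of $\obj g$. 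This proves assertion 1.

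For assertion 2, the key point is that on each overlap $\ssect[][\pm]_*$ the two sectorial first-integrals are related by the fixed linear map $h\mapsto\sigma h\exp\obj[][\pm]{\varphi}(h)$ appearing in the gluing conditions, which solving $\eqtag$ guarantees: e.g.\ on $\ssect[][-]_*$ we have $\hh[][\zi]_* = \sigma\,\hh[][\iz]_*\exp\big(\obj[][-]{\varphi}\circ\hh[][\iz]_*\big)$. Since both $\hh[][\zi]_*$ and $\hh[][\iz]_*$ are first-integrals of $\obj[][\zi]X$ and $\obj[][\iz]X$ respectively, and since a $1:1$ resonant vector field is determined up to a holomorphic unit by its foliation and then exactly by its first-integral (both $\obj X$ have $X_0$ as their $u$-preserving part, so no units are introduced), the equality of foliations $\fol{\obj[][\zi]X}=\fol{\obj[][\iz]X}$ on $\ssect[][-]_*$ upgrades to $\obj[][\zi]X = \obj[][\iz]X$ there — concretely, because the relation between the first-integrals is a biholomorphism of the leaf coordinate, the two functions $\obj g$ computed from $\obj[][\zi]N$ and $\obj[][\iz]N$ coincide on the overlap. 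Hence the $\obj X$ glue to a holomorphic vector field $X$ on $\bigcup_{j,\sharp}\ssect_*$, and it is $1:1$ resonant because each $\obj X$ differs from $X_0$ by a bounded multiple of $Y$ that tends to $0$ on the separatrices.

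The main obstacle — and the reason $c>c_O$ is needed — is extending $X$ holomorphically across $\{xy=0\}$ to obtain holomorphy on all of $\mathcal{V}_*$. Here I would invoke Proposition~\ref{prop:secto_first-int}~3: choosing $c$ large forces $\norm[\hh^{\pm1}]{\ssect[][\pm]}\leq A\ee^{-c/k+\norm[N]{\ssect}}$, so that the logarithmic data $\obj[][\pm]{\varphi}$, which are only germs near $0^{\pm1}$, are actually evaluated on the small discs $\hh(\ssect[][\pm]_*)$ lying well inside their domains of convergence; this makes the fixed-point solution $N$ of $\eqtag$ bounded with controlled norm, which in turn keeps $\obj g$ bounded on $\ssect_*$ with $\obj g\to 0$ along $\{xy=0\}$. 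The glued $X$ is then bounded on $\mathcal{V}_*\setminus\{xy=0\}$ and, since $\{xy=0\}$ has complex codimension one and $\mathcal{V}_*$ is (the interior of the closure of) a full neighborhood union of sectors, Riemann's removable singularity theorem in several variables yields a holomorphic extension to $\mathcal{V}_*$. (The threshold $c_O$ is the maximum of the $c(k,\mu)$ from Proposition~\ref{prop:secto_first-int} and whatever bound makes the Cauchy--Heine fixed-point argument for $\eqtag$ converge with the $\varphi$'s in the picture.) The verification that the extension is still of the form $X_0+RY$ with $R$ bounded holomorphic on the hollow domain, and the polynomial-in-$u$ structure of $R$, is deferred to the detailed treatment in Section~\ref{sec:Orbital_realization}.
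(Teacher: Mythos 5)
Your proposal follows the paper's own route quite closely: assertion~1 is the same logarithmic-derivative computation (the paper simply refers back to Lemma~\ref{lem:secto_norma}~2), assertion~2 uses the gluing relations coming from~$\eqtag$ to identify the two sectorial coefficients of $Y$ on each overlap (the paper makes this explicit by computing $\obj[][\iz]X\cdot\hh[][\zi]_{*}$ in two ways, which is just a more computational version of your ``common foliation, equal action on $u$, hence equal vector fields'' argument, valid on the dense set where $\dd\hh[][\zi]_{*}\neq0$ and then everywhere by the identity theorem), and the extension across $\left\{ xy=0\right\} $ is the same Riemann removable-singularity argument from local boundedness, with the finer structure of $R$ deferred to Corollary~\ref{cor:orbital_saddle}, exactly as in the paper.

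The one step whose justification as you wrote it would fail is the nonvanishing of the denominator $1+Y\cdot\obj N_{*}$. There are no ``slightly larger sectors'' on which $N$ is known to be defined and bounded: the $u$-sectors are fixed infinite sectors, and for the $\iz$-sectors the $y$-domain is the annulus $1<\left|y\right|<2$, so shrinking leaves no interior room; moreover Cauchy estimates starting only from $\norm[N]{\mathcal{V}}\leq1$ cannot yield $\norm[Y\cdot\obj N_{*}]{\ssect}<1$. The mechanism the paper uses is different: the quantitative derivative bounds of the Cauchy--Heine transform (Theorem~\ref{thm:Cauchy-Heine}, item~4), which scale like $c^{-2}$, force $\norm[y\ppp Ny]{\mathcal{V}}\leq\frac{1}{2}$ once $c>c_{O}$ is large --- this is precisely one of the places where the twist parameter is needed, and it is recorded in Corollary~\ref{cor:orbital_saddle}~1. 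Your later remark that a large $c$ keeps the fixed-point solution $N$ ``bounded with controlled norm'' gestures in the right direction, but the relevant quantity is not $\norm[N]{\mathcal{V}}$ itself; it is the bound on $Y\cdot\obj N_{*}$, i.e.\ on the logarithmic $y$-derivative, which must be imported from the estimates attached to the construction of $N$ rather than deduced from boundedness alone.
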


Once this vector field $X$ is synthesized we recognize it is actually
in the required normal form by bounding its growth as $u\to\infty$
for fixed $y$, see Corollary~\ref{cor:orbital_saddle}.
\begin{proof}
Define 
\begin{eqnarray*}
\obj R & := & -\frac{X_{0}\cdot\obj N_{*}}{1+Y\cdot\obj N_{*}}.
\end{eqnarray*}
Taking $c$ big enough ensures that $\obj R$ is holomorphic on $\ssect_{*}$
by forcing $\norm[Y\cdot\obj N_{*}]{\ssect}<1$, so that $\obj X=X_{0}+\obj R\times Y$
is holomorphic on $\ssect_{*}$ too. The claim is discussed in Corollary~\ref{cor:orbital_saddle}~1.
\begin{enumerate}
\item This is the same proof as that of Lemma~\ref{lem:secto_norma}~2. 
\item Because of Riemann's removable singularity theorem, each $\obj R$
(which is locally bounded near points of $\left\{ xy=0\right\} $,
see Corollary~\ref{cor:orbital_saddle}~1) is the restriction of
a function $R$ holomorphic on $\mathcal{V}_{*}$ if and only if $\obj[][\zi]R=\obj[][\iz]R$
on $\ssect[][-]_{*}$ and $\obj[j+1][\iz]R=\obj[][\zi]R$ on $\ssect[][+]_{*}$.
On the one hand, since $\hh[][\iz]_{*}$ is a first-integral of $\obj[][\iz]X$,
we have: 
\begin{eqnarray*}
\obj[][\iz]X\cdot\hh[][\zi]_{*} & = & \sigma\obj[][\iz]X\cdot\left(\hh[][\iz]\exp\obj[][-]{\varphi}\circ\hh[][\iz]_{*}\right)=0,
\end{eqnarray*}
while on the other hand we compute directly (after taking logarithmic
derivatives):
\begin{eqnarray*}
\obj[][\iz]X\cdot\hh[][\zi]_{*} & = & \hh[][\zi]_{*}\times\left(X_{0}\cdot\obj[][\zi]N_{*}+\obj[][\iz]R\times\left(1+Y\cdot\obj[][\zi]N_{*}\right)\right),
\end{eqnarray*}
as indeed $X_{0}\cdot\hh[][][0]_{*}=0$ and $Y\cdot\hh[][][0]_{*}=\hh[][][0]_{*}$.
Both identities considered together imply that $X_{0}\cdot\obj[][\zi]N_{*}+\obj[][\iz]R\times\left(1+Y\cdot\obj[][\zi]N_{*}\right)=0$,
which can be rewritten as $\obj[][\zi]R=\obj[][\iz]R$ on $\ssect[][-]_{*}$.
The proof in the other intersection is similar.
\end{enumerate}
\end{proof}
Solving~$\eqtag$ requires a refinement of the Cauchy-Heine transform
in order to recover functions $\obj N$ whose pairwise difference
in consecutive sectors is precisely $\obj[][\pm]{\varphi}$. Yet this
statement is imprecise since the collection $N$ itself determines
the first-integral $\hh$ that must be used to evaluate the right-hand
side $\obj[][\pm]{\varphi}\circ\hh$. The solution of the problem
must therefore be obtained as a fixed-point. 

In order to prove that this fixed-point method is well defined, and
to ascertain its convergence, we need to control the size of the neighborhoods
$\hh[][][N]\left(\ssect[][\pm]\right)$ of $0^{\pm1}$ so that they
fit within the disc of convergence of the corresponding $\obj{\varphi}$
and that everything takes place in a Banach space. This control is
gained through the twist parameter $c>0$, as follows from the estimates
of Lemma~\ref{lem:iteration} and Proposition~\ref{prop:contraction}.
This fact can already be surmised from the bounds of Proposition\ref{prop:secto_first-int}~3.

The rest of the section regards giving precise proofs and statements
leading to the resolution of~$\eqtag$, and Section~\ref{subsec:Orbital_realization}
concludes this section by providing a proof of the orbital part of
the Main~Theorem. 

\subsection{\label{subsec:Cauchy-Heine}Cauchy-Heine transform}
\begin{defn}
\label{def_adapted_sectors}We name $\mathcal{V}$ the collection
of sectors $\left(\ssect\right)$ in the variables $\left(u,y\right)$
as in Definition~\ref{def_sectors}. Let $c>c\left(k,\mu\right)$
be given as in Lemma~\ref{lem:secto_first-int_NF}.
\begin{enumerate}
\item A collection $\Delta=\left(\obj[][\pm]{\Delta}\right)$ of $2k$ star-shaped
domains of $\proj$ will be called \textbf{admissible} if:
\begin{itemize}
\item $\obj[][\xp]{\Delta}$ is a star-shaped domain centered at $0$,
\item $\obj[][\xm]{\Delta}$ is a star-shaped domain centered at $\infty$.
\end{itemize}
\item We say that $N=\left(\obj N\right)\in\mathcal{A}$ (Definition~\ref{def_first-integral})
is \textbf{adapted} to an admissible collection $\Delta$ if $\obj[][\pm]{\Delta}$
contains a disk centered at $0^{\pm1}$ of radius at least $\frac{2A}{\ee^{\nf ck}}\exp\norm[N]{\mathcal{V}}$.
\end{enumerate}
\end{defn}

Here is the cornerstone of the construction.
\begin{thm}
\label{thm:Cauchy-Heine}There exists a constant $c_{0}=c_{0}\left(k,\mu\right)\geq c\left(k,\mu\right)$
such that the upcoming statements hold for every fixed $c>c_{0}$.
Let $\Delta$ be an admissible collection as well as some $N\in\mathcal{A}$
adapted to $\Delta$. Take any collection $f=\left(\obj[][\pm]f\right)\in\holb[\Delta]'$.
There exists a constant $K=K\left(k,\mu\right)>0$, as well as a unique
collection $\Sigma\left(N,f\right)=\left(\obj{\Sigma}\right)\in\mathcal{A}$
such that the following properties hold. 
\begin{enumerate}
\item For all $j\in\zsk$ we have 
\begin{eqnarray*}
\obj[j+1][\iz]{\Sigma}-\obj[][\zi]{\Sigma} & = & \obj[][+]f\circ\hh[][\zi]\,\,\,\,\,\,\mbox{on }\ssect[][+],\\
\obj[j][\zi]{\Sigma}-\obj[][\iz]{\Sigma} & = & \obj[][-]f\circ\hh[][\iz]\,\,\,\,\,\,\mbox{on }\ssect[][-].
\end{eqnarray*}
\item $f\mapsto\Sigma\left(N,f\right)$ is a linear continuous map with
\begin{eqnarray*}
\norm[\Sigma\left(N,f\right)]{\mathcal{V}} & \leq & \frac{K}{c^{2}}\norm[f]{\Delta}'\ee^{\norm[N]{\mathcal{V}}};
\end{eqnarray*}
we recall that $\mathcal{V}$ is the collection of sectors $\left(\ssect\right)$.
\item $\obj{\Sigma}\left(u,y\right)=\OO{\sqrt{u}}$ as $u\to0$.
\item Moreover, with obvious notations:
\begin{eqnarray*}
\norm[u\frac{\partial\Sigma}{\partial u}]{\mathcal{V}} & \leq & \frac{K}{c^{2}}\norm[f']{\Delta}\ee^{\norm[N]{\mathcal{V}}}\left(1+\norm[u\frac{\partial N}{\partial u}]{\mathcal{V}}\right),\\
\norm[y\frac{\partial\Sigma}{\partial y}]{\mathcal{V}} & \leq & \frac{K}{c^{2}}\norm[f']{\Delta}\ee^{\norm[N]{\mathcal{V}}}\left(1+\norm[y\frac{\partial N}{\partial y}]{\mathcal{V}}\right),
\end{eqnarray*}
\end{enumerate}
\end{thm}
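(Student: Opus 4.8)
The plan is to build $\Sigma(N,f)$ by a Cauchy–Heine-type integral transform adapted to the necklace structure, following the scheme of \cite{SchaTey,RT2} but with the crucial twist-dependent estimates coming from Lemma~\ref{lem:secto_first-int_NF}~3. The idea is: the prescribed jumps $\obj[][+]f\circ\hh[][\zi]$ and $\obj[][-]f\circ\hh[][\iz]$ are bounded holomorphic functions on the intersections $\ssect[][\pm]$, and by the estimate $\norm[(\hh[][][0])^{\pm1}]{\ssect[][\pm]}\leq A\ee^{-c/k+\norm[N]{\mathcal{V}}}$ they take values in a disc of radius $\frac{2A}{\ee^{c/k}}\exp\norm[N]{\mathcal V}$ about $0^{\pm1}$, which is where adaptedness of $N$ to $\Delta$ guarantees that $\obj[][\pm]f$ is defined. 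Since $\obj[][\pm]f\in\holb[\Delta]'$ vanishes at $0^{\pm1}$, one has the pointwise bound $|\obj[][\pm]f\circ\hh[][\pm]|\leq\norm[f]{\Delta}'\cdot|\hh[][\pm]|^{\pm1}\leq\norm[f]{\Delta}'A\ee^{-c/k+\norm[N]{\mathcal V}}$; this is the source of the $\ee^{\norm[N]{\mathcal V}}$ factor in the estimates and, after one more gain from the integration, of the $\frac1{c^2}$ — see below.

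\textbf{Construction.} First I would write down the transform explicitly. For each index $(j,\sharp)$ define $\obj{\Sigma}$ as a sum of $2k$ Cauchy-type integrals, one over each "edge" of the necklace, each of the form $\frac{1}{2\ii\pi}\int_{\gamma}\frac{g(\xi)}{\xi-u}\,\dd\xi$ where $g$ is the appropriate jump $\obj[][\pm]f\circ\hh[][\pm]$ (viewed as a function of $u$ with $y$ a parameter) and $\gamma$ is a path in the relevant intersection sector going from $0$ out to the boundary of $\mathcal V$ and back, with a suitable choice of which edges contribute to which sector so that the alternating-sum jump relations in item~1 come out by the Plemelj/Sokhotski formula. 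The standard bookkeeping here is identical to the $1:1$ case treated in the literature; the pull-back to $(x,y)$-space via $\iota$ and the restriction to $|y|<2$ (resp. $1<|y|<2$) is routine because $\iota$ only changes the $u$-coordinate. Holomorphy of $\obj{\Sigma}$ on $\ssect$ and continuity up to closure follow from the usual properties of Cauchy integrals of continuous functions, and the $\OO{\sqrt u}$ vanishing at $u=0$ of item~3 is arranged, as in \cite{TeySphere}, by routing the paths through the point $0$ and using that the integrand vanishes there together with the $\sqrt{\cdot}$ behaviour of the necklace geometry (the critical points of $u^{k}+u^{-k}$ on the unit circle force the sectors to overlap in a way that produces this exponent). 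Linearity in $f$ is immediate from linearity of the integral; uniqueness follows because the difference of two solutions has no jumps, hence glues to a bounded holomorphic function on a punctured neighbourhood of $(0,0)$ which vanishes at $u=0$, forcing it to be $0$.

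\textbf{The estimates (items 2 and 4).} This is where the twist parameter earns its keep and where I expect the only real work to lie. Bounding $\norm[\Sigma(N,f)]{\mathcal V}$ amounts to bounding each Cauchy integral $\frac1{2\ii\pi}\int_\gamma\frac{g(\xi)}{\xi-u}\dd\xi$. One gains one factor of $1/c$ from $\norm[g]{}\leq A\,\norm[f]{\Delta}'\ee^{-c/k+\norm[N]{\mathcal V}}$ — but $\ee^{-c/k}$ beats any power of $c$, so really one extracts $\ee^{-c/k}\leq \frac{K_1}{c^2}$ for $c$ large and the remaining $\ee^{\norm[N]{\mathcal V}}$ is carried along; a second factor is the length-versus-distance estimate for $\int_\gamma\frac{\dd\xi}{\xi-u}$, which is $\OO{1}$ after the contour is chosen to stay a definite angular distance from $u\in\ssect$ (this is exactly what the $\frac{5\pi}{8k}$ half-widths in Definition~\ref{def_sectors} buy us, and it is the geometric heart of the matter). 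For item~4 I would differentiate under the integral sign: $u\,\partial_u$ applied to $\frac{g(\xi)}{\xi-u}$ gives $\frac{u\,g(\xi)}{(\xi-u)^2}$, which is still integrable along $\gamma$ with a bound of the same shape, while $y\,\partial_y$ hits $g=\obj[][\pm]f\circ\hh[][\pm]$ by the chain rule, producing $(\obj[][\pm]f)'(\hh[][\pm])\cdot y\,\partial_y\hh[][\pm]$; since $\hh[][\pm]=\hh[][][0]\exp\obj[][\pm]N$ and $y\,\partial_y\log\hh[][][0]=1$ (because $Y\cdot\hh[][][0]=\hh[][][0]$), this is $(\obj[][\pm]f)'(\hh[][\pm])\cdot\hh[][\pm]\cdot(1+y\,\partial_y\obj[][\pm]N)$, whence the $\norm[f']{\Delta}$ and the factor $(1+\norm[y\partial_yN]{\mathcal V})$; the $u$-derivative bound is analogous using $u\,\partial_u\log\hh[][][0]$, which is $\OO{1+|u|^{2k}}=\OO1$ on $\mathcal V$. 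In every case the exponentially small $\ee^{-c/k}$ absorbs all polynomial losses in $c$ and leaves the claimed $\frac{K}{c^2}$. The main obstacle, then, is purely the careful contour choice and the uniformity of all constants in $y\in\overline{D}$ and in the admissible data — a bookkeeping challenge rather than a conceptual one, since the conceptual gain (shrinking $\hh(\ssect[][\pm])$ by raising $c$) is already secured by Lemma~\ref{lem:secto_first-int_NF}.
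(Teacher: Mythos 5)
Your overall scheme is the paper's: with $y$ as a parameter, integrate the jump data $\obj[][\pm]f\circ\hh$ against a Cauchy kernel along rays bounding the intersection sectors, use adaptedness plus the bound $\norm[{\left(\hh\right)^{\pm1}}]{\ssect[][\pm]}\leq A\ee^{-\nf ck+\norm[N]{\mathcal{V}}}$ to control the composite, get the jumps from the Cauchy formula, and obtain item~4 by letting the derivatives hit $f\circ\hh$ (your chain-rule computation producing $\norm[f']{\Delta}\left(1+\norm[y\frac{\partial N}{\partial y}]{\mathcal{V}}\right)$ matches the paper's). But there is a genuine gap at item~3: you take the plain kernel $\frac{1}{2\ii\pi}\,\frac{1}{\xi-u}$ and claim the $\OO{\sqrt{u}}$ vanishing is ``arranged by routing the paths through $0$'' and by the ``$\sqrt{\cdot}$ behaviour of the necklace geometry''. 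That is not where the exponent comes from and the plain kernel does not give it: as $u\to0$ a Cauchy--Heine integral $\frac{1}{2\ii\pi}\int_{\Gamma}\frac{g(z)}{z-u}\,\dd z$ tends to the generically nonzero constant $-\frac{1}{2\ii\pi}\int_{\Gamma}\frac{g(z)}{z}\,\dd z$ (the integrand being exponentially flat at the endpoints, this limit exists), so your $\obj{\Sigma}$ is merely $\OO 1$, item~3 fails, and with it the normalization that makes $\Sigma\left(N,f\right)$ unique. The paper builds the factor in by hand, using the kernel $\frac{\sqrt{u}}{\sqrt{z}\left(z-u\right)}$ in~(\ref{eq:CH_definition}); this is precisely what yields $\obj{\Sigma}=\OO{\sqrt{u}}$ (the $\frac{1}{\sqrt{z}}$ being harmless by flatness of the integrand), and it is also what gives the transform its good behaviour under $u\mapsto\frac1u$, which is needed later in Section~\ref{sec:Uniqueness}. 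Your uniqueness argument is fine once item~3 holds, but it hinges on exactly the normalization your construction does not provide.

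A secondary but real defect is your contour: ``from $0$ out to the boundary of $\mathcal{V}$ and back'' describes a finite loop, whereas the sectors $\sect$ are infinite in the $u$-variable and the jump identities of item~1 must hold on all of $\ssect[][\pm]$. The paper integrates along the infinite half-lines $\obj[][\pm]{\Gamma_{\pm}}$ from $0$ to $\infty$ bounding $\sect[][\pm]$, with convergence at both ends coming from the exponential flatness of $\obj[][\pm]f\circ\hh$ (using $f^{\pm}\left(h\right)=\OO h$ at $0^{\pm1}$); a finite out-and-back contour would only create the prescribed jump across a bounded piece of the intersection. Your estimate strategy (sup of the composite of size $A\norm[f]{\Delta}'\ee^{-\nf ck+\norm[N]{\mathcal{V}}}$, exponential smallness absorbing all polynomial losses to give $\frac{K}{c^{2}}$, care only needed when $u$ approaches the integration rays) is consistent with how the paper, following \cite{SchaTey,TeySphere}, obtains items~2 and~4, so that part is a matter of bookkeeping as you say; the kernel normalization and the infinite contours are the substantive corrections to make.
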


\begin{proof}
~
\begin{enumerate}
\item and~2. Here the $y$-variable plays the role of a parameter and is
supposed to be fixed. The functions $\left(\obj{\Sigma}\right)$ are
built by integrating $\obj[][\pm]f\circ\hh$ against some kernel we
describe below and along half-lines $\obj[][\pm]{\Gamma_{\pm}}$ bounding
the sectors $\sect[][\pm]$ that are provided with the orientation
$0\to\infty$, as in Figure.~\ref{fig:C-H_contour}. For the sake
of simplicity we only deal with the case $k=1$ (and drop the index
$j$ altogether), the general case resulting from an immediate adaptation
of what has been done in~\cite[Theorem 2.5]{SchaTey}.

Being given $f=\left(f^{+},f^{-}\right)$ meeting the hypothesis,
we define 
\begin{align}
\Sigma^{\zi}\left(u,y\right) & :=\frac{\sqrt{u}}{2\ii\pi}\int_{\Gamma_{-}^{-}}\frac{f^{-}\circ\hh[~][\iz]\left(z,y\right)}{\sqrt{z}\left(z-u\right)}\dd z\label{eq:CH_definition}\\
 & -\frac{\sqrt{u}}{2\ii\pi}\int_{\Gamma_{-}^{+}}\frac{f^{+}\circ\hh[~][\zi]\left(z,y\right)}{\sqrt{z}\left(z-u\right)}\dd z\nonumber \\
\Sigma^{\iz}\left(u,y\right) & :=\frac{\sqrt{u}}{2\ii\pi}\int_{\Gamma_{+}^{-}}\frac{f^{-}\circ\hh[~][\iz]\left(z,y\right)}{\sqrt{z}\left(z-u\right)}\dd z\\
 & -\frac{\sqrt{u}}{2\ii\pi}\int_{\Gamma_{+}^{+}}\frac{f^{+}\circ\hh[~][\zi]\left(z,y\right)}{\sqrt{z}\left(z-u\right)}\dd z.\nonumber 
\end{align}
Clearly these integrals are:
\begin{itemize}
\item well defined since $N$ is adapted to $\Delta$ and $\left|\hh[][][N]\left(z,y\right)\right|^{\pm1}\leq\frac{A}{\ee^{c}}\exp\norm[N]{}$
if $z\in\sect[~][\pm]$;
\item absolutely convergent because of the flatness of the exponential term
coming from $f^{\pm}\circ\hh[~]$, since $f^{\pm}\left(h\right)=\OO h$.
\end{itemize}
Properties~1. and~2. have been established in~\cite[Proposition 4.11]{TeySphere}.
The main point of the argument is the following: if $\left(u,y\right)\in\ssect[~][+]$,
then the Cauchy formula yields 
\begin{align*}
\Sigma^{\zi}\left(u,y\right)-\Sigma^{\iz}\left(u,y\right)= & \frac{\sqrt{u}}{2\ii\pi}\oint_{\Gamma_{-}^{-}-\Gamma_{+}^{-}}\frac{f^{-}\circ\hh[~][\iz]\left(z,y\right)}{\sqrt{z}\left(z-u\right)}\dd z\\
 & +\frac{\sqrt{u}}{2\ii\pi}\oint_{\Gamma_{+}^{+}-\Gamma_{-}^{+}}\frac{f^{+}\circ\hh[~][\zi]\left(z,y\right)}{\sqrt{z}\left(z-u\right)}\dd z\\
= & 0+2\ii\pi\times\frac{\sqrt{u}}{2\ii\pi}\frac{f^{+}\circ\hh[~][\zi]\left(z,y\right)}{\sqrt{u}}\\
= & f^{+}\circ\hh[~][\zi]\left(u,y\right),
\end{align*}
with a similar identity involving $f^{-}$ when $u\in V^{-}$. Of
course one must apply the Cauchy formula on a compact contour and
take a limit, but the flatness of the integrand ensures the process
actually works out nicely. Analogous details are dealt with in~\cite[Theorem 2.5]{SchaTey}.\\

\item[3.] Because $\hh$ is $1$-flat at $0$ and $\infty$, the contribution
of $\frac{1}{\sqrt{z}}$ to the integral is irrelevant: we would have
shown that the integral without that term is bounded. Hence $\Sigma$
is $\OO{\sqrt{u}}$.\\
We deduce from this asymptotic bound the fact that $\Sigma$ is unique.
Indeed if $\widetilde{\Sigma}\in\mathcal{A}$ is another collection
satisfying 1. and $\widetilde{\Sigma}=\OO{\sqrt{u}}$, then for fixed
$y$ the sectorial functions $\obj C~:~u\mapsto\obj{\Sigma}\left(u,y\right)-\obj{\widetilde{\Sigma}}\left(u,y\right)$
coincide on consecutive intersections, hence are sectorial restrictions
of a holomorphic function $C$ on $\cc^{\times}$. Since $C$ is bounded
it extends to an entire function thanks to Riemann's theorem on removable
singularity, thus a constant according to Liouville's theorem. But
this constant vanishes because $\Sigma$ and $\widetilde{\Sigma}$
are $\OO{\sqrt{u}}$, therefore $\Sigma=\widetilde{\Sigma}$.
\item[4.] These estimates follow in exactly the same manner as their counterpart
in~\cite[Theorem 2.5]{SchaTey}.
\end{enumerate}
\end{proof}
\begin{figure}[H]
\hfill{}\includegraphics[width=0.5\columnwidth]{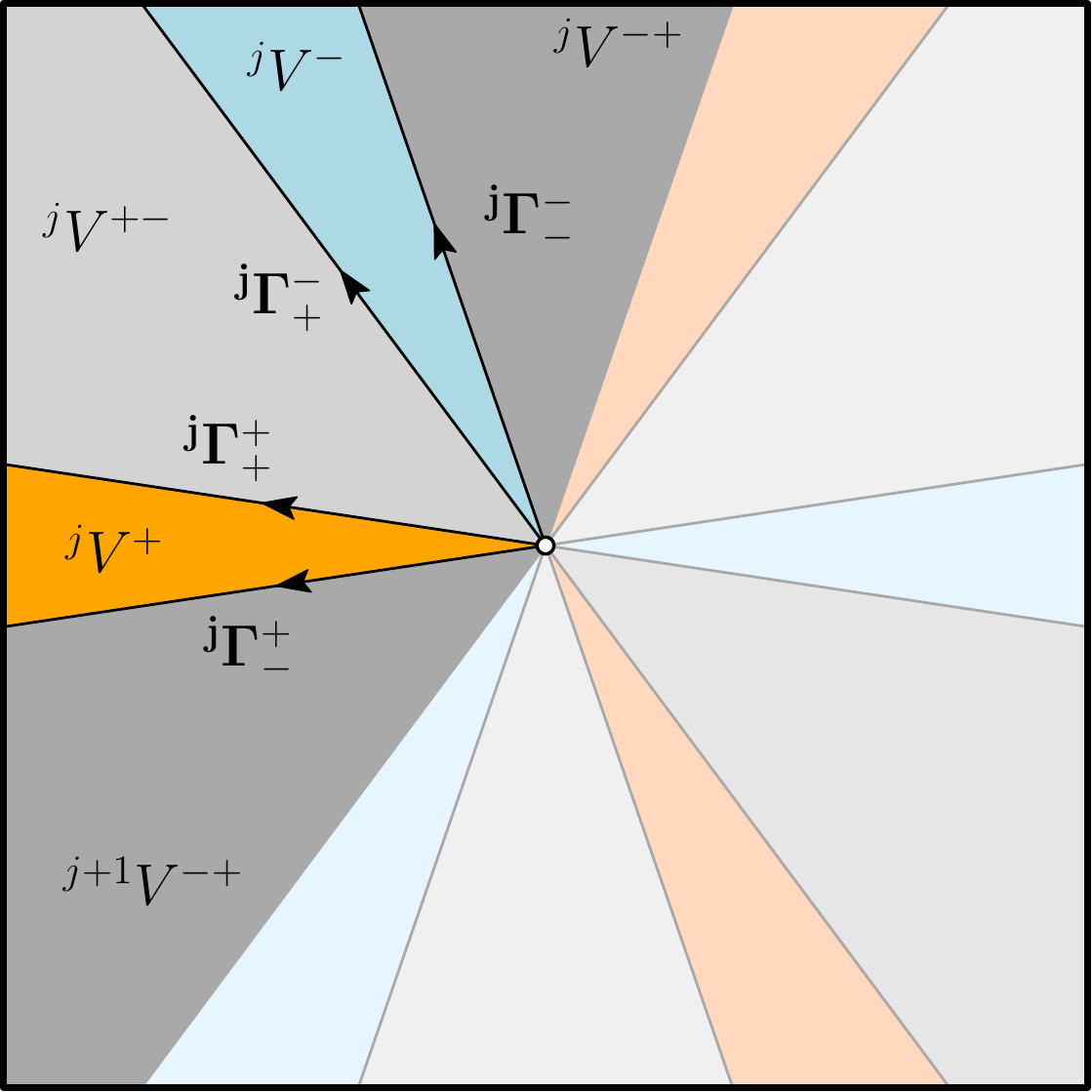}\hfill{}

\caption{\label{fig:C-H_contour}Contours used for the Cauchy-Heine transform.}
\end{figure}

\begin{rem}
\label{rem:inversion}The choice of the normalizing factor $\frac{\sqrt{u}}{\sqrt{z}}$
in the definition~(\ref{eq:CH_definition}) of $\obj{\Sigma}$ has
been done to ensure a ``nice'' behavior under the involution $\tau~:~u\mapsto\frac{1}{u}$.
This has been discussed in~\cite[Section 4.3, Proposition 4.11]{TeySphere},
and we get back to that fact in Section~\ref{sec:Uniqueness}.
\end{rem}

\subsection{\label{subsec:Fixed-point}The fixed-point method}

Let $\varphi:=\left(\obj{\varphi}\right)$ be the logarithmic data
associated to an orbital necklace and let $\Delta$ be an admissible
collection of open discs centered at $0^{\pm1}$ such that $\overline{\obj{\Delta}}$
lies in the domain of convergence of $\obj{\varphi}$. Consider the
partial map built from Theorem~\ref{thm:Cauchy-Heine}:
\begin{align*}
\Lambda:=\Sigma\left(\bullet,\varphi\right)~:~N & \longmapsto\Sigma\left(N,\varphi\right)~,
\end{align*}
which is defined on the space of all functions $N$ adapted to $\Delta$.
Starting from the collection $N_{0}:=\left(0\right)\in\mathcal{A}$,
which is of course adapted to $\Delta$ if $c$ is large enough, we
can compute $N_{1}:=\Lambda\left(N_{0}\right)\in\mathcal{A}$. We
wish to iterate that map to obtain a sequence $\left(N_{n}\right)_{n}$
lying in the Banach space $\mathcal{A}$.
\begin{lem}
\label{lem:iteration}Let $\mathcal{B}$ be the unit ball of $\mathcal{A}$.
There exists $c_{1}=c_{1}\left(k,\mu,\varphi\right)\geq c_{0}\left(k,\mu\right)$
such that for all $c>c_{1}$, every element of $\mathcal{B}$ is adapted
to $\Delta$ and $\Lambda$ is a self-map of $\mathcal{B}$.
\end{lem}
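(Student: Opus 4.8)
The plan is to show that for $c$ sufficiently large, $\Lambda=\Sigma(\bullet,\varphi)$ maps the unit ball $\mathcal{B}$ of $\mathcal{A}$ into itself, the two points being (i) every $N\in\mathcal{B}$ is adapted to $\Delta$ so that $\Lambda(N)$ is even defined, and (ii) $\norm[\Lambda(N)]{\mathcal{V}}\leq1$. Both are consequences of Theorem~\ref{thm:Cauchy-Heine} together with the fact that the bounds there decay like $\nf1{c^{2}}$, so the strategy is essentially bookkeeping of constants.

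For (i): by Definition~\ref{def_adapted_sectors}, $N$ is adapted to $\Delta$ as soon as each $\obj[][\pm]{\Delta}$ contains a disc about $0^{\pm1}$ of radius at least $\frac{2A}{\ee^{\nf ck}}\exp\norm[N]{\mathcal{V}}$. Since $\Delta$ is fixed (its closure lies in the domain of convergence of $\varphi$), there is some fixed $\rho>0$ with each $\obj[][\pm]{\Delta}\supset\{|h-0^{\pm1}|<\rho\}$; and for $N\in\mathcal{B}$ we have $\exp\norm[N]{\mathcal{V}}\leq\ee$, so it suffices to require $\frac{2A\ee}{\ee^{\nf ck}}\leq\rho$, i.e. $c\geq k\log\frac{2A\ee}{\rho}=:c_{1}'(k,\mu,\varphi)$. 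Enlarging this threshold to also dominate $c_{0}(k,\mu)$ from Theorem~\ref{thm:Cauchy-Heine} handles adaptedness for the whole ball at once.

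For (ii): apply Theorem~\ref{thm:Cauchy-Heine}.2 with $f:=\varphi$ (each $\obj[][\pm]{\varphi}$ is holomorphic and bounded on $\overline{\obj[][\pm]{\Delta}}$, and vanishes at $0^{\pm1}$ by~(\ref{eq:orbit_invar_expo}), so $\varphi\in\holb[\Delta]'$). This gives
\[
\norm[\Lambda(N)]{\mathcal{V}}=\norm[\Sigma(N,\varphi)]{\mathcal{V}}\leq\frac{K}{c^{2}}\norm[\varphi]{\Delta}'\ee^{\norm[N]{\mathcal{V}}}\leq\frac{K\ee}{c^{2}}\norm[\varphi]{\Delta}'
\]
for $N\in\mathcal{B}$, where $K=K(k,\mu)$ and $\norm[\varphi]{\Delta}'$ is a fixed finite constant depending only on $\varphi$ and $\Delta$. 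Hence $\norm[\Lambda(N)]{\mathcal{V}}\leq1$ as soon as $c^{2}\geq K\ee\norm[\varphi]{\Delta}'$, i.e. $c\geq\sqrt{K\ee\norm[\varphi]{\Delta}'}=:c_{1}''(k,\mu,\varphi)$. Setting $c_{1}(k,\mu,\varphi):=\max\{c_{0}(k,\mu),c_{1}'(k,\mu,\varphi),c_{1}''(k,\mu,\varphi)\}$ yields the claim: for $c>c_{1}$, every element of $\mathcal{B}$ is adapted to $\Delta$, $\Lambda$ is defined on $\mathcal{B}$, and $\Lambda(\mathcal{B})\subset\mathcal{B}$.

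There is no serious obstacle here — the entire content was front-loaded into Theorem~\ref{thm:Cauchy-Heine}, whose estimate~2 carries exactly the $\nf1{c^{2}}$ gain needed to beat the fixed constants $K$, $A$ and $\norm[\varphi]{\Delta}'$. The only mild care point is that one must keep the exponential factor $\ee^{\norm[N]{\mathcal{V}}}$ under control, which is automatic on the \emph{unit} ball; this is precisely why the lemma is stated for $\mathcal{B}$ and not for a ball of arbitrary radius, and it foreshadows that the subsequent contraction estimate (Proposition~\ref{prop:contraction}) will likewise be obtained by taking $c$ large.
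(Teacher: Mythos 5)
Your proof is correct and follows essentially the same route as the paper's: adaptedness of the unit ball is reduced to $\frac{2A\ee}{\ee^{\nf ck}}\leq\rho$ (giving a threshold of the form $k\ln\frac{2A\ee}{\rho}$, which in fact corrects a small slip in the paper, where the factor $\ee$ appears in the denominator), and the self-map property follows from the $\nf K{c^{2}}$ estimate of Theorem~\ref{thm:Cauchy-Heine}~2 with $f:=\varphi$, yielding the same threshold $\sqrt{K\ee\norm[\varphi]{\Delta}'}$. Your explicit check that $\varphi\in\holb[\Delta]'$ and the inclusion of $c_{0}$ in the maximum are harmless additions the paper leaves implicit.
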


This lemma ensures that the sequence $\left(N_{n}\right)_{n}$ is
well defined and lies in $\mathcal{B}$.
\begin{proof}
Let $0<\rho$ be less than the minimum of all the radii of convergence
of $\obj{\varphi}$. We need to ensure that, for all $c$ and all
$N\in\mathcal{B}$: 
\begin{align*}
\frac{2A}{\ee^{\nf ck}}\exp\norm[N]{\mathcal{V}} & \leq\rho,
\end{align*}
where $\mathcal{V}$ is the collection of fibered sectors $\left(\ssect\right)$.
Assume that 
\begin{align*}
\norm[N]{\mathcal{V}} & \leq1,
\end{align*}
so that $N$ is adapted to $\Delta$ whenever 
\begin{align*}
c & >k\ln\left(\frac{2A}{\ee\rho}\right).
\end{align*}
Because of Theorem~\ref{thm:Cauchy-Heine}~2. one has
\begin{align*}
\norm[\Lambda\left(N\right)]{\mathcal{V}} & \leq\frac{K\ee}{c^{2}}\norm[\varphi]{\Delta}'.
\end{align*}
By taking $c>\sqrt{K\ee\norm[\varphi]{\Delta}'}$ we therefore ensure
that $\norm[\Lambda\left(N\right)]{\mathcal{V}}\leq1$, as expected.
Hence we may define $c_{1}:=\max\left\{ \sqrt{K\ee\norm[\varphi]{\Delta}'},k\ln\left(\frac{2A}{\ee\rho}\right)\right\} $.
\end{proof}
\begin{prop}
\label{prop:contraction}There exists $c_{2}=c_{2}\left(k,\mu,\varphi\right)\geq c_{1}$
such that, for all $c>c_{2}$ the mapping $\Lambda$ is a $\frac{1}{2}$-contracting
self-map of $\mathcal{B}$. In particular, the sequence $\left(N_{n}\right)_{n}$
converges towards the unique fixed point $N$ of $\Lambda|_{\mathcal{B}}$.
\end{prop}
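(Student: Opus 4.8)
The plan is to prove the contraction estimate directly from the linear bound in Theorem~\ref{thm:Cauchy-Heine}~2., exploiting the fact that $\Lambda$ is built by composing the (linear, continuous) Cauchy-Heine operator $\Sigma(\bullet,\varphi)$ with the substitution $N\mapsto\varphi\circ\hh[][][N]$. The key observation is that $\Lambda(N)-\Lambda(\widetilde N)=\Sigma\bigl(N,\,\varphi\circ\hh[][][N]-\varphi\circ\hh[][][\widetilde N]\bigr)$ is not quite right since $\Sigma$ also depends on $N$ through the adaptedness; more precisely one writes $\Lambda(N)-\Lambda(\widetilde N)=\bigl(\Sigma(N,\varphi)-\Sigma(\widetilde N,\varphi)\bigr)$ and must estimate each term. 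First I would use that, for $N,\widetilde N\in\mathcal B$, both are adapted to $\Delta$ (by Lemma~\ref{lem:iteration}), so the integral formula~(\ref{eq:CH_definition}) makes sense with either first-integral, and the difference of the two Cauchy-Heine integrals is again a Cauchy-Heine-type integral of the integrand difference $\obj[][\pm]\varphi\circ\hh[][][N]-\obj[][\pm]\varphi\circ\hh[][][\widetilde N]$ along the same contours.

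The main step is then a Lipschitz estimate for the substitution map. Since $\hh[][][N]=\hh[][][0]\exp\obj N$ and by Proposition~\ref{prop:secto_first-int}~3. (or Lemma~\ref{lem:secto_first-int_NF}~3.) one has $\norm[\hh[][][N]]{\ssect[][+]}\leq A\ee^{-\nf ck+1}$ on the relevant intersections (and symmetrically for the reciprocal on the $-$ side), the values of $\hh[][][N]$ stay in a disc of radius $\rho/2$, say, well inside the domain of convergence of $\obj[][\pm]\varphi$, on which $\obj[][\pm]\varphi$ is Lipschitz with some constant $L=L(\varphi)$ for the $\norm[\bullet]\Delta'$-type norm; combined with $\bigl|\ee^{\obj N}-\ee^{\obj[][][\widetilde N]}\bigr|\leq\ee\,\bigl|\obj N-\obj[][][\widetilde N]\bigr|$ valid on $\mathcal B$, this gives $\norm[\obj[][\pm]\varphi\circ\hh[][][N]-\obj[][\pm]\varphi\circ\hh[][][\widetilde N]]{}'\leq L'\,A\,\ee^{-\nf ck}\,\norm[N-\widetilde N]{\mathcal V}$ for a constant $L'$ depending only on $k,\mu,\varphi$. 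Feeding this into Theorem~\ref{thm:Cauchy-Heine}~2. yields
\begin{align*}
\norm[\Lambda(N)-\Lambda(\widetilde N)]{\mathcal V} & \leq\frac{K}{c^{2}}\,L'\,A\,\ee^{-\nf ck}\,\ee\,\norm[N-\widetilde N]{\mathcal V},
\end{align*}
and since the right-hand coefficient tends to $0$ as $c\to\infty$, one picks $c_{2}\geq c_{1}$ so large that it is $\leq\tfrac12$ for all $c>c_{2}$. This makes $\Lambda|_{\mathcal B}$ a $\tfrac12$-contraction of the complete metric space $\mathcal B$ into itself (self-map by Lemma~\ref{lem:iteration}), and the Banach fixed-point theorem delivers a unique fixed point $N\in\mathcal B$ to which $(N_{n})_{n}$ converges geometrically.

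The step I expect to be the main obstacle is the careful handling of the dependence of the Cauchy-Heine integrals on $N$ through the contour-independent integrand: one must check that replacing $\hh[][][N]$ by $\hh[][][\widetilde N]$ does not move the integration half-lines $\obj[][\pm]{\Gamma_{\pm}}$ (they are fixed, depending only on the sectors $\sect[][\pm]$) and that the flatness of $\obj[][\pm]\varphi\circ\hh$ — hence absolute convergence — is preserved \emph{uniformly} over $N\in\mathcal B$, so that the difference of integrals really is the integral of the difference with no boundary contribution. This is where the adaptedness condition of Definition~\ref{def_adapted_sectors} and the uniform exponential bound $\norm[\hh[][][N]]{}\leq A\ee^{-\nf ck+1}$ do the work; once uniformity is secured the rest is the routine Lipschitz-in-the-argument computation, identical in spirit to the estimates of~\cite[Theorem 2.5]{SchaTey} and~\cite[Proposition 4.11]{TeySphere}, and I would simply refer to those for the analytic details rather than reproduce them.
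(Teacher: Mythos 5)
Your argument is correct, and it is in substance the proof that the paper chooses not to write out: the paper's own proof of Proposition~\ref{prop:contraction} is a one-line citation of \cite[Proposition 4.13]{TeySphere} (the nonparametric case $y:=1$ of Theorem~\ref{thm:Cauchy-Heine}), plus the remark that the bounded parameter $\left|y\right|<2$ changes nothing. What you do differently is to reconstruct that contraction estimate directly from the explicit formula~(\ref{eq:CH_definition}): since the integration half-lines are fixed and independent of $N$, the difference $\Lambda\left(N\right)-\Lambda\left(\widetilde{N}\right)$ is the Cauchy--Heine integral of $\obj[][\pm]{\varphi}\circ\hh[][][N]-\obj[][\pm]{\varphi}\circ\hh[][][{\widetilde{N}}]$, which you bound pointwise by the Lipschitz constant of $\obj[][\pm]{\varphi}$ on a disc containing the values of both first-integrals (guaranteed by adaptedness, Lemma~\ref{lem:iteration}) times $\left|\hh[][][0]\right|^{\pm1}\ee\,\norm[N-\widetilde{N}]{\mathcal{V}}$; running the same estimate that produces Theorem~\ref{thm:Cauchy-Heine}~2. then yields a contraction factor of order $\OO{c^{-2}}$ (your extra factor $A\ee^{-\nf ck}$ is a bonus, not needed), hence $\leq\frac{1}{2}$ for $c$ large, and the Banach fixed-point theorem on the closed ball $\mathcal{B}$ concludes. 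The one imprecision is the phrase ``feeding this into Theorem~\ref{thm:Cauchy-Heine}~2.'': that statement, as written, bounds $\Sigma\left(N,f\right)$ only for integrands of the exact form $f\circ\hh[][][N]$ with a single $N$ and $f\in\holb[\Delta]'$, so it cannot be invoked verbatim for a difference of two compositions; one must, as your own first paragraph correctly frames it, redo the estimate at the level of the integral using the pointwise bound above. This is harmless, and both routes ultimately rest on the same computation carried out in \cite{TeySphere} and \cite{SchaTey}; yours has the merit of being self-contained modulo those deferred analytic details, while the paper's is shorter at the price of opacity.
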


\begin{proof}
The existence of this bound follows immediately from~\cite[Proposition 4.13]{TeySphere}.
Although the cited result is for the nonparametric version of Theorem~\ref{thm:Cauchy-Heine}
(\emph{i.e.} for $y:=1$), it gives the conclusion here too since
$\left|y\right|$ takes values bounded by $2$.
\end{proof}

\subsection{\label{subsec:Orbital_realization}Proof of the orbital realization}

Let us summarize what we have done so far. Starting from an orbital
necklace in a formal orbital class $\left(k,\mu\right)$, we consider
its logarithmic nonlinear part $\varphi=\left(\obj{\varphi}\right)$.
By taking $c>c_{2}$ as in Proposition~\ref{prop:contraction}, the
sequence $\left(\Lambda^{\circ n}\left(0\right)\right)_{n\geq0}$
converges towards a fixed-point $N$ of $\Lambda$, \emph{i.e.} a
solution in $\mathcal{A}$ of~$\left(\star\right)$. In order to
complete the orbital realization and recover $X_{R}$ in the expected
class, we wish to apply Lemma~\ref{lem:vector_field_reconstruction}
and next to prove that $R\in\mathcal{C}_{*}$, where
\begin{eqnarray*}
\mathcal{C}_{*} & := & \left\{ F~:~F\left(x,y\right)=y\sum_{n=1}^{2k}u_{*}^{n}f_{n}\left(y\right)~,~f_{n}\in\holb[\left\{ \left|y\right|<2\right\} ]\right\} .
\end{eqnarray*}
For this we need the following corollary, which settles the orbital
realization for $1:1$ resonant saddles.
\begin{cor}
\label{cor:orbital_saddle}There exists $c_{O}=c_{O}\left(k,\mu,\varphi\right)\geq c_{2}$
such that for all $c>c_{O}$ the following assertions hold.
\begin{enumerate}
\item Each function $\obj R:=-\frac{X_{0}\cdot\obj N_{*}}{1+Y\cdot\obj N_{*}}$
belongs to $\holf[\ssect_{*}]$. Moreover this function is locally
bounded near $\left\{ xy=0\right\} $.
\item These functions are actually the restrictions to sectors of a function
$R\in\mathcal{C}_{*}$.
\end{enumerate}
\end{cor}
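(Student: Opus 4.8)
The goal is to upgrade the sectorial solution $N=\left(\obj N\right)$ of $\eqtag$, produced as a fixed point of $\Lambda$ in the previous subsection, to a genuine resonant saddle $X_R$ in normal form, i.e. to show that the resulting $R$ lies in $\mathcal{C}_{*}$. The plan is to treat the two items in sequence. For item~1, I would start from the explicit formula $\obj R = -\frac{X_0\cdot\obj N_*}{1+Y\cdot\obj N_*}$. Holomorphy on $\ssect_*$ is immediate once the denominator is nonvanishing, and that is guaranteed by Lemma~\ref{lem:iteration} and Proposition~\ref{prop:contraction}: for $c>c_2$ the fixed point $N$ lies in the unit ball $\mathcal{B}$, and moreover — crucially — the estimates of Theorem~\ref{thm:Cauchy-Heine}~4 control $\norm[u\partial_u N]{\mathcal{V}}$ and $\norm[y\partial_y N]{\mathcal{V}}$; applying these bounds to the fixed-point identity $N=\Sigma(N,\varphi)$ yields a bound of order $\OO{c^{-2}}$ on the directional derivatives, so enlarging $c$ forces $\norm[Y\cdot\obj N_*]{\mathcal{V}}<1$. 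Hence $1+Y\cdot\obj N_*$ is a unit on $\ssect_*$ and $\obj R\in\holf[\ssect_*]$. The same derivative estimates, together with the flatness $\obj N=\OO{\sqrt u}$ from Theorem~\ref{thm:Cauchy-Heine}~3 (so $X_0\cdot\obj N_*$ and $Y\cdot\obj N_*$ are $\OO{\sqrt{u_*}}$, hence tend to $0$ as one approaches $\left\{xy=0\right\}$ from inside a sector), show that $\obj R$ is locally bounded near $\left\{xy=0\right\}$; this is exactly what is needed to invoke Riemann's removable singularity theorem in Lemma~\ref{lem:vector_field_reconstruction}~2.

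For item~2, Lemma~\ref{lem:vector_field_reconstruction} already tells us that the $\obj R$ glue to a single function $R$ holomorphic on $\mathcal{V}_*$ (the matching conditions $\obj[][\zi]R=\obj[][\iz]R$ on $\ssect[][-]_*$ and $\obj[j+1][\iz]R=\obj[][\zi]R$ on $\ssect[][+]_*$ were checked there using that the $\hh[][\zi]_*$ are first-integrals of the $\obj X$), provided $c>c_O\geq c_2$. So it remains to identify the shape of $R$. The plan is to fix $y$ with $1<|y|<2$ and study the growth of $u\mapsto R(u,y)$, where now $R$ is viewed — via the sectorial functions $\obj N$, which by Lemma~\ref{lem:secto_norma}~3 (see the forward reference to this very corollary) come from functions in the $(u,y)$ variables — as a function holomorphic on the union of the infinite sectors $\sect$ in the $u$-plane intersected with $\left\{1<|y|<2\right\}$, that is, on a full punctured neighborhood of $u=\infty$ as well as of $u=0$ in $\cc^\times$. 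Because $\obj N=\OO{\sqrt u}$ at $0$ and (by the involution symmetry of the Cauchy-Heine construction, Remark~\ref{rem:inversion}) $\OO{1/\sqrt u}$ at $\infty$, the numerator $X_0\cdot\obj N_*$ and denominator $1+Y\cdot\obj N_*$ have controlled polynomial-times-flat growth, and one reads off that $u\mapsto R(u,y)$ extends meromorphically across both $u=0$ and $u=\infty$ with a pole of order at most $2k$ at infinity and a zero at $0$ — the order $2k$ being dictated by the $c(1-u^{2k})$ term in $X_0$. Therefore $R(u,y)=\sum_{n=1}^{2k}u^n f_n(y)$ for each fixed $y$, and the coefficients $f_n$ depend holomorphically on $y$ by Cauchy's formula; the factor $y$ out front comes from the vanishing of $\obj N$ (hence of $R$) on $\left\{y=0\right\}$, inherited from the form of the $\hh[][][0]$. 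The disc $\left\{|y|<2\right\}$ of holomorphy for each $f_n$ then follows because the construction was carried out over $\left\{|y|<2\right\}$ throughout and the $f_n$, being Laurent coefficients in $u$ of a function holomorphic on $\sect\times\left\{|y|<2\right\}$, inherit that domain.

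\textbf{Main obstacle.} The delicate point is item~2, specifically the passage from "each sectorial $\obj R$ is holomorphic and bounded near $\left\{xy=0\right\}$'' to "the glued $R$, as a function of $u$ alone for fixed $y$, is a polynomial of degree exactly $2k$ in $u$ vanishing at $u=0$.'' One has to be careful that the relevant object genuinely lives in the $u$-variable — this is where Lemma~\ref{lem:secto_norma}~3 (i.e. that $\obj{\mathcal{N}}=\obj N_*$ descends to a function of $(u,y)$) is indispensable, and it is itself only justified by this corollary, so the argument must be organized to avoid circularity, presumably by establishing the $(u,y)$-descent directly from the integral formula~(\ref{eq:CH_definition}) (whose integrand depends on $z$ and $y$ only, with $u$ entering through $\sqrt u/(z-u)$, manifestly holomorphic in $u$). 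Once that is in place, controlling the behavior at $u=\infty$ — which relies on the $\tau: u\mapsto 1/u$ symmetry of the Cauchy-Heine kernel noted in Remark~\ref{rem:inversion} and on the structure of $X_0$ near $u=\infty$ — to pin down the pole order as at most $2k$ is the real content; the holomorphy in $y$ and the overall factor of $y$ are comparatively routine.
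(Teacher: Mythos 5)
Your treatment of item~1 and the skeleton of item~2 (gluing via Lemma~\ref{lem:vector_field_reconstruction}, entirety in $u$ for fixed $y$ in the annulus $\left\{ 1<\left|y\right|<2\right\} $, growth $\OO{u^{2k}}$ at $u=\infty$ forcing a polynomial of degree at most $2k$ in $u$, then killing $f_{0}$ and extracting the factor $y$ from flatness) do follow the paper's route. Two caveats there: the decay $N=\OO{1/\sqrt{u}}$ at infinity is not stated anywhere (Remark~\ref{rem:inversion} does not supply an estimate), and in any case what the degree bound needs is control of the \emph{derivatives} $u\frac{\partial N}{\partial u}$ and $y\frac{\partial N}{\partial y}$ at infinity, not of $N$ itself; the paper gets by with the mere sup-norm bounds of Theorem~\ref{thm:Cauchy-Heine}~4, which already give $\OO{u^{2k}}$, so your flatness claim is both unsupported and unnecessary.

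The genuine gap is your last step, the extension of the coefficients $f_{n}$ from the annulus to the disc. Your justification --- ``the construction was carried out over $\left\{ \left|y\right|<2\right\} $ throughout and the $f_{n}$, being Laurent coefficients in $u$ of a function holomorphic on a sector times $\left\{ \left|y\right|<2\right\} $, inherit that domain'' --- fails on both counts. By Definition~\ref{def_sectors} the $\iz$-sectorial domains only carry $1<\left|y\right|<2$, so for $\left|y\right|\leq1$ the glued $R$ is holomorphic only for $u$ in the union of the $\zi$-sectors, which covers roughly half of the directions: no Cauchy integral over a full $u$-circle is available there, polynomiality in $u$ is only known for $y$ in the annulus, and Laurent coefficients cannot be read off a single sector. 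This is exactly the point the paper works hardest on (the ``shape of the hollow domain''): it fixes $x=\ee^{\ii\theta}$ on the unit circle, notes that $y\mapsto R\left(\ee^{\ii\theta},y\right)$ is holomorphic on a $y$-sector $V_{\theta}$ of the punctured disc because the $\zi$-sectors do allow $\left|y\right|<2$, and lets $\theta$ run over $\left[0,2\pi\right]$ so the $V_{\theta}$ cover $\left\{ 0<\left|y\right|<2\right\} $, whence $f_{n}\in\holb[\left\{ \left|y\right|<2\right\} ]$. (Equivalently, one may evaluate at $2k+1$ points $u_{l}$ inside one $\zi$-sector, where the full $y$-disc is available, and recover $yf_{n}\left(y\right)$ from the $R\left(u_{l},\cdot\right)$ by a Vandermonde inversion.) So your ``comparatively routine'' assessment inverts the difficulty: the pole order at infinity is a two-line estimate, while the $y$-extension is the real content. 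On the positive side, your worry about circularity with Lemma~\ref{lem:secto_norma}~3 is well spotted, and your remedy is correct: the $\left(u,y\right)$-descent of $N$ is built into the integral formula~(\ref{eq:CH_definition}), which is manifestly a function of $\left(u,y\right)$, so no forward reference is actually needed.
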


\begin{proof}
First notice that by construction $\obj N\in\holb[{\sect\times D^{\sharp}}],$where
$D^{\zi}=\left\{ y~:~\left|y\right|<2\right\} $ and $D^{\iz}=\left\{ y~:~1<\left|y\right|<2\right\} $.
The intersection of the latter domains is the annulus 
\begin{align*}
C & :=\left\{ 1<\left|y\right|<2\right\} .
\end{align*}

For the sake of readability let us drop all indices. Observe that
\begin{align*}
x\ppp{N_{*}}x\left(x,y\right) & =u\ppp Nu\left(u\left(x,y\right),y\right)\\
y\ppp{N_{*}}y\left(x,y\right) & =u\ppp Nu\left(u\left(x,y\right),y\right)+\ppp Ny\left(u\left(x,y\right),y\right).
\end{align*}
In particular $Y\cdot N_{*}=\left(y\ppp Ny\right)_{*}$ and $X_{0}\cdot N_{*}=\left(u^{k+1}\ppp Nu+y\left(1+\mu u^{k}\right)\ppp Ny\right)_{*}$.
\begin{enumerate}
\item Since $\norm[N]{\mathcal{V}}\leq1$ (Lemma~\ref{lem:iteration}),
the estimates of Theorem~\ref{thm:Cauchy-Heine}~4. imply that 
\begin{eqnarray*}
\frac{\norm[y\ppp Ny]{\mathcal{V}}}{1+\norm[y\ppp Ny]{\mathcal{V}}} & \leq & \frac{K\ee}{c^{2}}\norm[\varphi']{\Delta}.
\end{eqnarray*}
If $c$ is large enough, then the right-hand side is smaller than
$\frac{1}{3}$ so that
\begin{align*}
\norm[y\ppp Ny]{} & \leq\frac{1}{2}.
\end{align*}
Therefore $Y\cdot N_{*}$ has a $\mathcal{V}_{*}$-norm less than
$\frac{1}{2}$ and $1+Y\cdot N_{*}$ cannot vanish on $\ssect_{*}$.
Moreover the latter modulus is bounded from below by $\frac{1}{2}$,
which gives the conclusion.
\item The fact that the sectorial functions $\obj R$ glue to some $R\in\holf[\mathcal{V}_{*}]$
has been explained in Lemma~\ref{lem:vector_field_reconstruction}.
The construction makes clear that $R=Q_{*}$ where
\begin{align*}
Q & :=\frac{u^{k+1}\ppp Nu+\left(c\left(1-u^{2k}\right)+\mu u^{k}\right)y\ppp Ny}{1+y\ppp Ny}\in\holf[\cc\times C].
\end{align*}
Let us bound the growth of $u\mapsto Q\left(u,y\right)$ as $u\to\infty$
with $y\in C$ fixed. We have:
\begin{align*}
\left|Q\left(u,y\right)\right| & \leq2\left|u\right|^{k}\left|u\ppp Nu\left(u,y\right)\right|+2\left(c\left|1-u^{2k}\right|+\mu\left|u\right|^{k}\right)\left|y\ppp Ny\left(u,y\right)\right|\\
 & \leq\left|u\right|^{k}\frac{2K\ee}{c^{2}}\norm[\varphi']{\Delta}+c\left(1+\left|u\right|^{2k}\right)+\left|\mu\right|\left|u^{k}\right|.
\end{align*}
As a conclusion $u\mapsto Q\left(u,y\right)$ is $\OO{u^{2k}}$, hence
$Q\in\holf[C]\left[u\right]_{\leq2k}$.\\
So far we have reached the point where $R\left(x,y\right)=\sum_{n=0}^{2k}f_{n}\left(y\right)\left(xy\right)^{n}$
for some functions $f_{n}\in\holb[C]$. But the construction actually
yields a function $R$ which is holomorphic on $\mathcal{V}_{*}$,
and this domain contains in addition to $\cc\times C$ a part corresponding
to $\left\{ \left(x,y\right)\in\mathcal{V}_{*}~:~\left|y\right|<2\text{ and }u\in\sect[][\zi]\right\} $.
For instance if we fix $x:=1$ then $R\left(1,y\right)$ is holomorphic
on the sector $V_{0}:=\left\{ 0<\left|y\right|<2,~\left|k\arg y-\pi\frac{2j+1}{2}\right|<\frac{5\pi}{8}\right\} $.
In particular each $f_{n}$ is holomorphic and bounded on $C\cup V_{0}$.
But as $x:=\ee^{\ii\theta}$ for $\theta\in\left[0,2\pi\right]$ moves
along the unit circle, we deduce that $y\mapsto R\left(\ee^{\ii\theta},y\right)$,
and thus each $f_{n}$, is holomorphic on every sector $V_{\theta}:=\left\{ 0<\left|y\right|<2,~\left|k\arg y+k\theta-\pi\frac{2j+1}{2}\right|<\frac{5\pi}{8}\right\} $,
whose union covers $\left\{ 0<\left|y\right|<2\right\} $. As a conclusion
$f_{n}$ is holomorphic and bounded on the whole punctured disc $\left\{ 0<\left|y\right|<2\right\} $,
therefore $f_{n}\in\holb[\left\{ \left|y\right|<2\right\} ]$.\\
Because $R=-X_{R}\cdot N_{*}$ (Lemma~\ref{lem:secto_norma}) and
since $\ppp Nu\left(u,y\right)=\OO y$ in the sector $\sect[0][+-]\times D^{+-}$
(see~\ref{eq:CH_definition}), we may take $y:=0$ for fixed $u$
to conclude $Q\left(u,0\right)=0$, which shows $f_{n}\left(0\right)=0$.\\
In a similar fashion we wish to evaluate $Q\left(0,y\right)=c\frac{y\ppp Ny}{1+y\ppp Ny}\left(0,y\right)$.
But from Theorem~\ref{thm:Cauchy-Heine}~3. we have $y\ppp Ny\left(u,y\right)=\OO{\sqrt{u}}$
as $u\to0$, which finally gives $Q\left(0,y\right)=0$. This completes
the proof.
\end{enumerate}
\end{proof}

\subsection{\label{subsec:Realization_p:q}The case of a $p:q$ resonant saddle}

The general case of a $p:q$ saddle foliation follows exactly the
same steps as for $1:1$ saddles, and most of the necessary results
hold \emph{verbatim}. The main modification is the necessity to use
a convenient version of the model first-integral, namely:
\begin{align*}
H\left(u,y\right) & =y^{\nf 1q}u^{-\nf{\mu}q}\exp\frac{c\left(u^{-k}+u^{k}\right)}{qk}\\
u_{*} & =x^{q}y^{p}.
\end{align*}
The fact that $H$ is multivalued in the $y$-variable is not an obstacle
since the pullback sectors $\ssect_{*}$ are simply connected. 

All subsequent arguments work in the same way.

\section{\label{sec:Period}Period operator and its natural section}

This section is devoted to the proof of the Cohomological Theorem.
From now on we work within the domain $\mathcal{U}_{*}:=\cc\times\left\{ y~:~\left|y\right|<2\right\} $
on which we consider a holomorphic vector field in normal form 
\begin{eqnarray*}
X_{R} & := & xu_{*}^{k}\pp x+\left(1+\mu u_{*}+R\right)Y
\end{eqnarray*}
build in the previous section (Corollary~\ref{cor:orbital_saddle}).
We denote by 
\begin{align*}
\obj{\mathcal{U}} & :=\left\{ \left(x,y\right)\in\mathcal{U}_{*}~:~u\left(x,y\right)\in\sect,~\left|y\right|<2\right\} 
\end{align*}
the corresponding sectors and their pairwise intersections $\obj[][\pm]{\mathcal{U}}$.
We also let $\fol R$ stand for the induced holomorphic foliation
on $\mathcal{U}_{*}$.

Being given a germ $G$ of a holomorphic function at $\left(0,0\right)$,
that is $G\in\germ{x,y}$, we outline how to solve 
\begin{eqnarray}
X_{R}\cdot F & = & G\label{eq:eq-h}
\end{eqnarray}
 in order to provide the sectorial normalization of Lemma~\ref{lem:secto_norma}
with a more geometrical flavor, since the functions $\obj N$ involved
in the sectorial normalization of $X_{R}$ are solutions of the equation
$X_{R}\cdot N=-R$.
\begin{thm}
\label{thm:cohomog}Consider the cohomological equation~(\ref{eq:eq-h})
with $G\in\germ{x,y}$.
\begin{enumerate}
\item There exists a formal solution $F\in\frml{x,y}$ if and only if the
Taylor expansion of $G$ at $\left(0,0\right)$ does not contain terms
$u^{n}$ for $n\in\left\{ 0,\ldots,k\right\} $. We write $\germ{x,y}_{>k}$
the space of all such germs.
\item There exists a neighborhood $\Omega$ of $\left(0,0\right)$ on which
$G$ is holomorphic and bounded, such that to each $\left(x,y\right)\in\Omega\cap\obj{\mathcal{U}}$
one can attach a path 
\begin{align*}
\obj{\gamma}\left(x,y\right)~ & :~[0,+\infty[\to\Omega\cap\obj{\mathcal{U}}
\end{align*}
 tangent to $X_{R}$ that starts at $\left(x,y\right)$ and accumulates
on $\left(0,0\right)$. 
\item The parametric integral $\obj F:=\int_{\obj{\gamma}}G\frac{\dd u}{u^{k+1}}$
is convergent and defines a holomorphic function $\obj F\in\holb[{\Omega\cap\obj{\mathcal{U}}}]$
solving~(\ref{eq:eq-h}) if and only if $G\in\germ{x,y}_{>k}$.
\end{enumerate}
\end{thm}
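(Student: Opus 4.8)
The three assertions are naturally handled in order, since each builds on the previous one. For~(1), the plan is to work with the operator $X_R\cdot$ on formal power series and to exploit that $X_R = X_0 + RY$ with $X_0\cdot = u^k x\,\partial_x + (c(1-u^{2k})+\mu u^k)\,Y\cdot$ and $R\in\polg uy$ having valuation $\geq 1$ in $u$ (so $R$ contributes only to strictly positive $u$-degrees). Writing $G=\sum_{n,m} g_{n,m}x^n y^m$ and $F=\sum_{n,m} \varphi_{n,m}x^n y^m$, one computes that $X_R\cdot$ acts triangularly with respect to the filtration by $u$-degree along resonant rays (recall $Y\cdot x^ny^m = (qm-pn)x^ny^m$, which vanishes exactly on the resonant monomials $u^\ell$), and that the term $u^k x\,\partial_x$ raises the $u$-degree by $k$. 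The key point is that on the line spanned by the $u^\ell$ the operator $Y\cdot$ kills the leading contribution, so matching coefficients of $u^\ell$ for $\ell=0,\dots,k$ forces $g_{q\ell,p\ell}=0$; conversely, if all these vanish one solves recursively for the $\varphi_{n,m}$, the recursion being uniquely solvable because the "diagonal'' coefficient $qm-pn$ is nonzero off the resonant rays and the $u^k$-shift handles the resonant rays themselves. This reproves the exactness statement already displayed in the excerpt (the formal exact sequence with $\Pi_k$), so I would simply invoke that bookkeeping; the only care needed is to check that $R$ does not spoil the triangularity, which follows from $R$ having $u$-valuation $\geq 1$.

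\textbf{Construction of the trajectories.} For~(2), the plan is to use the model first-integral structure. On each sector $\obj{\mathcal U}$ the foliation $\fol R$ has the sectorial first-integral $\hh_*$ from Definition~\ref{def_first-integral}, which by Proposition~\ref{prop:secto_first-int} is flat at $\{xy=0\}$; along a leaf, a convenient time-coordinate is furnished by the resonant monomial $u$ itself, since $X_R\cdot u = X_0\cdot u + R\,(Y\cdot u) = u\cdot u^k = u^{k+1}$ (using $Y\cdot u=0$ and $X_0\cdot u = u^{k+1}$, which is exactly why the integrand in~(3) is $G\,\dd u/u^{k+1}$). Thus a trajectory of $X_R$ through $(x,y)$, reparameterized by $u$, is a curve $u\mapsto(x(u),y(u))$ with $\dd y/\dd u$ controlled by the vector field; the real path $\obj\gamma$ is obtained by letting $u$ run along a ray in $\sect$ toward $0$ (possible since $\sect$ is a genuine sector in the $u$-plane and the sectorial normalization $\obj\Psi$ conjugates $\fol R$ on $\obj{\mathcal U}$ to $\fol{X_0}$, whose leaves one understands explicitly via $\widehat H$). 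Shrinking $\Omega$ so that $\|u\|_{\mathcal V}$ is small and using the flatness of $\hh_*$, the leaf stays in $\obj{\mathcal U}$ and accumulates on $(0,0)$ as $u\to 0$ along that ray. I would spell this out by pulling everything back through $\obj\Psi$ to the model, where the assertion is a direct computation with $\widehat H(u)=u^{-\mu}\exp\frac{c(u^{-k}+u^k)}{k}$: the leaf $\{y\widehat H(u)=h\}$ is a graph $y=h/\widehat H(u)$ over the ray, and $|h/\widehat H(u)|\to 0$ as $u\to 0$ by the flatness estimate of Lemma~\ref{lem:secto_first-int_NF}(3).

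\textbf{The integral and the solvability criterion.} For~(3), set $\obj F(x,y):=\int_{\obj\gamma(x,y)} G\,\frac{\dd u}{u^{k+1}}$. Since $X_R\cdot u = u^{k+1}$, along the path $\dd u/u^{k+1}$ is precisely $\dd t$ for the natural complex time $t$, so formally $X_R\cdot\obj F = G$ by differentiating under the integral sign (the endpoint contributes $G$, the moving base-point $(0,0)$ contributes nothing since the integrand is integrable there). The substance is the convergence: near $u=0$ one has $|G\,\dd u/u^{k+1}|\sim |G|\,|u|^{-k-1}|\dd u|$, and writing $G=\sum g_{n,m}x^ny^m$ with $y$ bounded on the leaf and $x^ny^m$ expressed via $u$ and the leaf coordinate, the only monomials producing a nonintegrable $|u|^{-k-1+\ell}$ singularity (for $\ell=0,\dots,k$) are exactly the resonant ones $u^\ell$ with $\ell\le k$ — which is the condition $G\in\germ{x,y}_{>k}$. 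Hence the integral converges (and $\obj F$ is bounded, holomorphic on $\Omega\cap\obj{\mathcal U}$, by Morera/dominated convergence with the parameter $(x,y)$) if and only if $G\in\germ{x,y}_{>k}$; the "only if'' direction follows because if some $g_{q\ell,p\ell}\ne 0$ with $\ell\le k$ the integral diverges logarithmically or worse, matching the obstruction found in~(1). I expect the main obstacle to be the careful bookkeeping in step~(3): one must show the flatness of $\hh_*$ (equivalently, the rapid decay of $y$ along the leaf as $u\to0$) genuinely tames all the non-resonant terms of $G$ uniformly in the base point, so that the convergence is locally uniform and the differentiation under the integral is legitimate; this is where the estimates of Theorem~\ref{thm:Cauchy-Heine} and Lemma~\ref{lem:secto_first-int_NF}(3) do the real work, exactly as in the analogous saddle-node computations.
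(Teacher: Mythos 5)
Your part (1) is essentially the paper's argument (obstruction read off from $Y\cdot x^ay^b=(bq-ap)x^ay^b$ plus the degree-$k$ shift of $u^kx\pp x$), so no issue there. The genuine gap is the construction of the asymptotic path in (2), and it propagates into (3). A leaf is \emph{not} a graph over a $u$-ray that tends to $(0,0)$: reducing to the model via $\obj{\Psi}$, if you let $u\to0$ along a ray where $\Re(u^{-k})\to+\infty$ then indeed $y=h/\widehat{H}(u)$ is exponentially flat, but then $x=(u/y^{p})^{1/q}\to\infty$, so the path leaves every neighborhood $\Omega$ of the origin; along a ray where $\Re(u^{-k})\to-\infty$ one gets $y\to\infty$; and along the neutral rays $|y|\sim|u|^{\Re\mu}$, so accumulation at $(0,0)$ would hold only for special values of $\mu$. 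No radial choice of $u$ works. The path used in the paper is genuinely spiraling: it comes from the orbit of the parabolic holonomy of the separatrix, concretely $y(t)=y_{*}(1+t)^{-\alpha}\ee^{\ii t}$ with $0<\alpha<\frac{1}{pk}$, so that $y$ winds around $0$ while $|y|$ decreases slowly, $x(t)\to0$ precisely because $\alpha<\frac{1}{pk}$, and $u(t)\sim\cst\,t^{-1/k}$ tends to $0$ tangentially to a neutral direction while remaining in $\sect$.

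This changes the nature of the convergence argument in (3). Your claim that, thanks to the flatness of $y$ along the leaf, every non-resonant monomial is absolutely integrable and only $u^{\ell}$, $\ell\le k$, obstructs convergence, fails: along the correct path one finds $\gamma^{*}\bigl(x^{a}y^{b}\frac{\dd u}{u^{k+1}}\bigr)\sim\cst\, t^{\frac{\alpha}{q}(ap-bq)-\frac{a}{kq}}\ee^{-\ii qt(ap-bq)}\dd t$, and already for $G=x$ (which lies in $\germ{x,y}_{>k}$) the exponent lies strictly between $-1$ and $0$, so the integral is \emph{not} absolutely convergent; it converges only conditionally, by Dirichlet's test, because of the oscillatory factor $\ee^{-\ii qt(ap-bq)}$ — an ingredient entirely absent from your argument (and along your ray-path the integral for $G=x$ diverges outright, since $x\to\infty$ there). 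The resonant monomials $u^{n}$ give integrands $\sim t^{-n/k}$, integrable if and only if $n>k$, which is where $G\in\germ{x,y}_{>k}$ enters. Two smaller points: $X_{R}\cdot u=qu^{k+1}$ (your factor is off by $q$, harmless), and the holomorphy of $\obj F$ on $\Omega\cap\obj{\mathcal{U}}$ is obtained in the paper from the simple connectivity (incompressibility) of the sectorial leaves, not merely from dominated convergence.
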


The proof of this result is given in Section~\ref{subsec:cohomog_eq}. 
\begin{rem}
We wish to underline that~3. is another instance of a phenomenon
observed in~\cite{Tey-EqH} for saddle-node vector fields: if $\eta$
is a meromorphic time-form of a vector field $X$ (that is $\eta\left(X\right)=1$),
then integrals of $G\eta$ along asymptotic paths converge if and
only if $X\cdot F=G$ admits a formal solution $F$. We do not know
if it is true when $X$ is a quasi-resonant saddle (irrational eigenratio)
or has a nonreduced singularity at $\left(0,0\right)$.
\end{rem}

The difference of two consecutive sectorial solutions $\obj F$ is
a first-integral of $X_{R}$ over the pairwise intersections of the
corresponding sectors, and the fact that they do not agree measure
how far they are from being the restriction of a holomorphic function
on $\Omega$. (Indeed if those bounded functions were to agree on
all intersections, then we would apply Riemann's theorem on removable
singularity.) Besides, each such pairwise difference factors through
$\hh_{*}$, as explained below.
\begin{lem}
\label{lem:ring_first-int}Any first-integral $\phi\in\holf[{\Omega\cap\obj[][\pm]{\mathcal{U}}}]$
of $X_{R}$ factors through $\hh[][\pm\mp]_{*}$: there exists $f\in\holf[{\obj[][\pm]{\Delta}}]$
such that
\begin{eqnarray*}
\phi & = & f\circ\hh[][\pm\mp]_{*},
\end{eqnarray*}
where $\obj[][\pm]{\Delta}:=\hh[][\pm\mp]_{*}\left(\Omega\cap\obj[][\pm]{\mathcal{U}}\right)$.
Moreover if $\phi$ is bounded, then $f$ also is.
\end{lem}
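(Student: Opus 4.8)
The plan is to exploit the fact, established in Lemma~\ref{lem:secto_norma} and Proposition~\ref{prop:secto_first-int}, that $\hh[][\pm\mp]_{*}$ is a first-integral of $X_{R}$ on each sector and that its fibers are exactly the leaves of the restricted foliation $\fol{X_{R}}$. Concretely, fix one of the two intersection types and work on the connected set $\Omega\cap\obj[][\pm]{\mathcal{U}}$. Since $\hh[][\pm\mp]_{*}$ is holomorphic there and submersive away from $\left\{ xy=0\right\} $ (its logarithmic derivative along $Y$ being $\hh[][\pm\mp]_{*}$ itself), the image $\obj[][\pm]{\Delta}:=\hh[][\pm\mp]_{*}\left(\Omega\cap\obj[][\pm]{\mathcal{U}}\right)$ is open, and by Proposition~\ref{prop:secto_first-int}~3 it is a punctured neighborhood of $0^{\pm1}$ — actually I should keep it as whatever open set the image is, and only later note it fills in at the pole. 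The first step is therefore to define $f$ pointwise by $f\left(h\right):=\phi\left(M\right)$ for any $M$ with $\hh[][\pm\mp]_{*}\left(M\right)=h$; this is well defined precisely because $\phi$, being a first-integral of $X_{R}$, is constant along the leaves, which by Proposition~\ref{prop:secto_first-int}~1 coincide with the fibers of $\hh[][\pm\mp]_{*}$.

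Next I would check that $f$ so defined is holomorphic. The clean way is local: near any point $M_{0}$ with $\hh[][\pm\mp]_{*}\left(M_{0}\right)=h_{0}\neq 0^{\pm1}$, the map $\hh[][\pm\mp]_{*}$ has a nonvanishing differential (one can complement it by, say, the coordinate $u$ to get a local biholomorphism from a neighborhood of $M_{0}$), so there is a local holomorphic section $s$ of $\hh[][\pm\mp]_{*}$, and $f=\phi\circ s$ is holomorphic near $h_{0}$. Since the fibers are connected this local definition agrees with the global one, so $f\in\holf[{\obj[][\pm]{\Delta}\setminus\{0^{\pm1}\}}]$ and $\phi=f\circ\hh[][\pm\mp]_{*}$ on $\Omega\cap\obj[][\pm]{\mathcal{U}}$ minus the separatrix. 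The identity $\phi=f\circ\hh[][\pm\mp]_{*}$ then extends to all of $\Omega\cap\obj[][\pm]{\mathcal{U}}$ by continuity (both sides are holomorphic and agree off a proper analytic subset).

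The remaining point — and the one I expect to be the only genuine subtlety — is holomorphy of $f$ at the puncture $0^{\pm1}$, and the boundedness claim. If $\phi$ is merely holomorphic without a boundedness hypothesis, one cannot in general fill in the pole, so the statement should be read as: $f$ is holomorphic on $\obj[][\pm]{\Delta}$ understood as the punctured-at-$0^{\pm1}$ image, unless $\phi$ is bounded. Under the boundedness hypothesis $\norm[\phi]{\Omega\cap\obj[][\pm]{\mathcal{U}}}<\infty$, we get $\norm[f]{\obj[][\pm]{\Delta}}<\infty$ immediately from the pointwise definition, and then Riemann's theorem on removable singularities applied to the isolated point $0^{\pm1}$ (using that $\hh[][\pm\mp]_{*}$ is onto a punctured neighborhood of $0^{\pm1}$, so $0^{\pm1}$ is interior to the closure) upgrades $f$ to a bounded holomorphic function on the full neighborhood $\obj[][\pm]{\Delta}\cup\{0^{\pm1}\}$. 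I would phrase the conclusion so that "$f$ also is bounded" carries exactly this content. The only care needed is to make sure the two intersection cases $\obj[][-]{\mathcal{U}}$ (where $\hh[][\iz]_{*}$ is used, pole at $\infty$) and $\obj[][+]{\mathcal{U}}$ (where $\hh[][\zi]_{*}$ is used, pole at $0$) are handled with the matching first-integral, which is exactly what the notation $\hh[][\pm\mp]_{*}$ encodes; everything else is symmetric under the involution $u\mapsto 1/u$ recalled in Remark~\ref{rem:inversion}.
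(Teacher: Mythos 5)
Your proof is correct and follows essentially the same route as the paper: $\phi$ is constant on the leaves, the fibers of $\hh[][\pm\mp]_{*}$ on the intersection coincide with those leaves (Lemma~\ref{lem:secto_norma}/Proposition~\ref{prop:secto_first-int}, after possibly shrinking $\Omega$), so $f$ is well defined, holomorphic via local sections of the submersion $\hh[][\pm\mp]_{*}$, and bounded whenever $\phi$ is --- which is exactly the paper's one-line argument that $\hh[][\pm\mp]_{*}$ is a coordinate on the sectorial leaf space. Note only that since $\obj[][\pm]{\Delta}$ is by definition the image, which omits $0^{\pm1}$, your extra Riemann-removability discussion at the pole goes beyond what the lemma claims (that extension, together with the vanishing at $0^{\pm1}$, is obtained later from flatness along $\left\{ xy=0\right\} $ in item~1.(c) of the Cohomological Theorem), and your aside that the logarithmic derivative of $\hh[][\pm\mp]_{*}$ along $Y$ is $\hh[][\pm\mp]_{*}$ itself should read that $Y\cdot\hh[][\pm\mp]_{*}=\hh[][\pm\mp]_{*}\left(1+Y\cdot\obj N_{*}\right)\neq0$, which still gives the submersivity you need.
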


\begin{proof}
The function $\phi$ is constant on the leaves of $\fol R$ and therefore
defines a holomorphic function on its space of leaves. The conclusion
follows from the fact that $\hh[][\pm\mp]_{*}$ is a coordinate on
the corresponding sectorial leaf space, as guaranteed by Lemma~\ref{lem:secto_norma}
(maybe at the expense of reducing the size of $\Omega$).
\end{proof}
Therefore one can build a linear operator
\begin{eqnarray*}
\per\,:\,G\in\germ{x,y}_{>k} & \longmapsto & \left(\obj[][\pm]f\right)
\end{eqnarray*}
such that
\begin{eqnarray*}
\obj[][\zi]F-\obj[][\iz]F & = & \obj[][-]f\circ\hh[][\iz]_{*}\\
\obj[j+1][\iz]F-\obj[][\zi]F & = & \obj[][+]f\circ\hh[][\zi]_{*}\,.
\end{eqnarray*}

\begin{defn}
\label{def_period}The previous operator $\per$ is called the \textbf{period
operator} associated to $X_{R}$. 
\end{defn}

\begin{rem}
~
\begin{enumerate}
\item The value of the period $\obj[][\pm]{\per}\left(G\right)\left(h\right)$
is obtained by computing the integral $\int_{\obj[][\pm]{\gamma}\left(h\right)}\frac{G}{}\frac{\dd x}{x}$,
where $\obj[][\pm]{\gamma}\left(h\right)$ is an asymptotic cycle
obtained by the concatenation of the asymptotic tangent paths described
in Theorem~\ref{thm:cohomog}~2. passing through $\left(x,y\right)\in\obj[][\pm]{\mathcal{U}}$
and such that $h=\hh[][\pm\mp]\left(x,y\right)$.
\item As it has been already pointed out above, the formal solution of $X_{R}\cdot F=G$
converges if and only if $\per\left(G\right)=0$.
\item When $y:=0$ \emph{resp.} $x:=0$, the formal solution $F\left(x,0\right)$
\emph{resp.} $F\left(0,y\right)$ is simply given by integrating $-pcx\pp xF\left(x,0\right)=G\left(x,0\right)$
\emph{resp}. $qcy\pp yF\left(0,y\right)=G\left(0,y\right)$, hence
it is analytic on $\left\{ xy=0\right\} $. As a consequence $\obj[][\pm]f\left(0^{\pm1}\right)=0$.
\end{enumerate}
\end{rem}

\begin{cor}
\label{cor:invar_as_period}Let a formal class $\left(k,\mu,P\right)$
be given. The complete modulus $\left(\obj[][\pm]{\psi},\obj[][\pm]f\right)$
of $Z_{G,R}$ can be expressed as periods along $X_{R}$, namely:
\begin{align*}
\left(\obj[][\pm]{\varphi}\right) & =\per\left(-R\right)\\
\left(\obj[][\pm]f\right) & =\per\left(\frac{1}{G}-\frac{1}{P_{*}}\right)
\end{align*}
where $\obj[][\pm]{\psi}=\sigma\id\exp\obj{\varphi}$.
\end{cor}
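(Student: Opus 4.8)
The proof amounts to unwinding the definition of each of the three functional moduli and matching it with the period operator of $X_R$. Denote by $G$ the holomorphic unit such that the orbital normalization brings $Z$ to $G\,X_{R}$, and by $P$ its formal temporal modulus (so $\frac{1}{P_{*}}$ is $\frac1P$ with the resonant monomial substituted); the orbital class of $\fol{X_{R}}$ — hence of $Z$ — is carried by $R$. In both parts the pattern is the same: exhibit bounded sectorial solutions of the relevant cohomological equation $X_{R}\cdot\bullet=\cdot$ (Theorem~\ref{thm:cohomog}), observe that their pairwise differences on the intersections $\obj[][\pm]{\mathcal{U}}$ of consecutive sectors are first-integrals of $X_{R}$ which, by Lemma~\ref{lem:ring_first-int}, factor through the canonical first-integrals $\hh[][\pm\mp]_{*}$, and read the resulting transition functions against Definition~\ref{def_period}.

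\emph{Orbital modulus.} The sectorial normalizations $\obj N$ of Lemma~\ref{lem:secto_norma} solve $X_{R}\cdot\obj N_{*}=-R$ and are the canonical sectorial solutions of Theorem~\ref{thm:cohomog} for $G:=-R$: both are bounded, both restrict on $\left\{ xy=0\right\}$ to the unique analytic solution there (indeed $R$ vanishes on $\left\{ xy=0\right\}$), and $\obj N=\OO{\sqrt u}$ by Theorem~\ref{thm:Cauchy-Heine}(3), so that their difference is a bounded sectorial first-integral that is forced to vanish. The gluing relations~$\eqtag$ satisfied by the $X_{R}$ we built read $\hh[][\zi]_{*}=\obj[][-]{\psi}\circ\hh[][\iz]_{*}$ on $\obj[][-]{\mathcal{U}}$ and $\hh[j+1][\iz]_{*}=\obj[][+]{\psi}\circ\hh[][\zi]_{*}$ on $\obj[][+]{\mathcal{U}}$, with $\obj[][\pm]{\psi}\left(h\right)=\sigma h\exp\obj[][\pm]{\varphi}\left(h\right)$ as in~\eqref{eq:orbit_invar_expo}; combining them with $\hh_{*}=\hh[][][0]_{*}\exp\obj N_{*}$ and the model identities $\hh[][\zi][0]_{*}=\sigma\,\hh[][\iz][0]_{*}$, $\hh[j+1][\iz][0]_{*}=\sigma\,\hh[][\zi][0]_{*}$ of Section~\ref{subsec:Model_orbit-space}, and taking logarithms, yields
\begin{align*}
\obj[][\zi]N_{*}-\obj[][\iz]N_{*}=\obj[][-]{\varphi}\circ\hh[][\iz]_{*}\qquad\text{and}\qquad\obj[j+1][\iz]N_{*}-\obj[][\zi]N_{*}=\obj[][+]{\varphi}\circ\hh[][\zi]_{*},
\end{align*}
the branch of the logarithm being pinned down by the vanishing of both sides on $\left\{ xy=0\right\}$. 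By Definition~\ref{def_period} this is exactly $\per\left(-R\right)=\left(\obj[][\pm]{\varphi}\right)$, and $\obj[][\pm]{\psi}=\sigma\id\exp\obj{\varphi}$ is the normalization recalled in Definition~\ref{def:MR_mod}.

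\emph{Temporal modulus.} By the formal exact sequence stated after the Cohomological Theorem and the invariance of the formal modulus, $P$ is the unique polynomial of degree $\leq k$ in $u$ with $P\left(0\right)\neq0$ for which $\frac{1}{G}-\frac{1}{P_{*}}\in\germ{x,y}_{>k}$, and this $P$ is Bruno's formal invariant. Theorem~\ref{thm:cohomog} then furnishes bounded sectorial solutions of $X_{R}\cdot T=\frac{1}{G}-\frac{1}{P_{*}}$, again restricting on $\left\{ xy=0\right\}$ to the analytic solution there; their pairwise differences on the $\obj[][\pm]{\mathcal{U}}$ are bounded first-integrals of $X_{R}$, hence factor as $\obj[][\pm]f\circ\hh[][\pm\mp]_{*}$ by Lemma~\ref{lem:ring_first-int}, and the collection $\left(\obj[][\pm]f\right)$ is by construction the Grintchy-Voronin $t$-shift of $Z$. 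By Definition~\ref{def_period} this is precisely $\left(\obj[][\pm]f\right)=\per\left(\frac{1}{G}-\frac{1}{P_{*}}\right)$.

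\emph{Main obstacle.} The delicate point, common to both parts, is the identification of the sectorial solutions coming from the realization construction (respectively from the Martinet-Ramis and Grintchy-Voronin theories) with the canonical ones used to define $\per$, together with the correct tracking of the logarithm branches so that the transition functions land in the prescribed germ spaces. This rests on the uniqueness of bounded sectorial solutions normalized by their restriction to the separatrices and their $\OO{\sqrt u}$ decay, in the spirit of the uniqueness argument of Theorem~\ref{thm:Cauchy-Heine}(3).
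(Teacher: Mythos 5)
Your proposal is correct and takes essentially the same route as the paper, whose entire proof is the sentence ``this is a direct consequence of what has been explained previously'': your unwinding --- sectorial solutions of the two cohomological equations (the normalizations $\obj N$ with $X_{R}\cdot\obj N_{*}=-R$ and the gluing relations they satisfy by construction, the solutions of $X_{R}\cdot T=\frac{1}{U}-\frac{1}{P_{*}}$ for the temporal part), factorization of their pairwise differences through $\hh[][\pm\mp]_{*}$ via Lemma~\ref{lem:ring_first-int}, and comparison with Definition~\ref{def_period} --- is exactly the implicit content of that remark. The only places you go beyond the paper are harmless: your reading of $\frac{1}{G}$ as the reciprocal of the unit factor of $Z_{G,R}$ (so that the second period is $\per\left(\frac{1}{U}-\frac{1}{P_{*}}\right)$ with $U=\frac{P}{1+PG}$) is the natural resolution of the paper's notational clash, and your identification of the fixed-point normalizations with the canonical sectorial solutions of Theorem~\ref{thm:cohomog} (boundedness, vanishing on $\left\{ xy=0\right\} $, $\OO{\sqrt{u}}$ decay plus a Liouville-type argument) is a legitimate, slightly more detailed justification of a step the paper leaves implicit.
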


\begin{proof}
This is a direct consequence of what has been explained previously.
\end{proof}
The technique used to carry out the realization of an orbital necklace
in Section~\ref{sec:Orbital_realization} actually allows us to show
that $\per$ admits an explicit section. This is done by reusing the
refined Cauchy-Heine transform presented in Theorem~\ref{thm:Cauchy-Heine}.
\begin{thm}
\label{thm:natural_section}Consider the admissible collection $\Delta$
defined by $\left(\hh[][\pm\mp][N]\left(\obj[][\pm]{\mathcal{U}}\right)\right)$
and let a collection $f=\left(\obj[][\pm]f\right)\in\holb[\Delta]'$
be given. Then there exists $\sec R\left(f\right)\in\mathcal{C}_{*}$
(the function space is defined in Section~\ref{subsec:Orbital_realization})
such that
\begin{eqnarray*}
\per[R]\left(\sec R\left(f\right)\right) & = & f.
\end{eqnarray*}
\end{thm}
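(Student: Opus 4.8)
The plan is to run exactly the same Cauchy-Heine machinery that produced the orbital realization in Section~\ref{sec:Orbital_realization}, but with the fixed-point iteration replaced by a single application of the transform, since here $N$ (and hence $\hh[][\pm\mp][N]$, and hence $\Delta$) is already fixed by the given vector field $X_R$. First I would set $\Delta:=\left(\hh[][\pm\mp][N]\left(\obj[][\pm]{\mathcal{U}}\right)\right)$ as in the statement; shrinking $\obj{\mathcal{U}}$ if necessary, Proposition~\ref{prop:secto_first-int}~3 guarantees that $\Delta$ is admissible and that $N$ is adapted to it once $c$ is large enough (which it is, being $\geq c_O$). Given $f=\left(\obj[][\pm]f\right)\in\holb[\Delta]'$, apply Theorem~\ref{thm:Cauchy-Heine} to obtain $\Sigma:=\Sigma\left(N,f\right)\in\mathcal{A}$ with $\obj{\Sigma}=\OO{\sqrt u}$ and the prescribed jumps $\obj[j+1][\iz]{\Sigma}-\obj[][\zi]{\Sigma}=\obj[][+]f\circ\hh[][\zi][N]$ on $\ssect[][+]$, and $\obj[j][\zi]{\Sigma}-\obj[][\iz]{\Sigma}=\obj[][-]f\circ\hh[][\iz][N]$ on $\ssect[][-]$.

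\textbf{From the jumps to the section.} Next I would feed $\Sigma$ into the reconstruction Lemma~\ref{lem:vector_field_reconstruction}-style computation: set $\obj{G}:=-\frac{X_R\cdot\obj{\Sigma}_{*}}{\,}$ — more precisely, one wants the function whose sectorial primitives under $X_R$ are the $\obj{\Sigma}$. Because $X_R\cdot\hh[][\pm\mp][N]_{*}=0$ (Lemma~\ref{lem:secto_norma}~2 applied to the foliation $\fol R$), the quantities $\obj[][\pm]f\circ\hh[][\pm\mp][N]_{*}$ are first-integrals of $X_R$ on the intersections, so $X_R\cdot\obj{G}$, where $\obj{G}:=X_R\cdot\obj{\Sigma}_{*}$, is a single holomorphic function $G$ on $\mathcal{U}_{*}\setminus\left\{xy=0\right\}$: indeed $X_R\cdot\obj[j+1][\iz]{\Sigma}_{*}-X_R\cdot\obj[][\zi]{\Sigma}_{*}=X_R\cdot\left(\obj[][+]f\circ\hh[][\zi][N]_{*}\right)=0$ on $\ssect[][+]_{*}$, and similarly on $\ssect[][-]_{*}$. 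Then $G=\per[R]\left(\obj{G}\right)^{-1}$-style bookkeeping shows $\per[R]\left(G\right)=f$ by the very definition of $\per$: the sectorial solutions $\obj{F}$ of $X_R\cdot\obj F=G$ can be taken to be $\obj{\Sigma}_{*}$ (up to a global holomorphic function, which does not affect the jumps), and $\obj[j+1][\iz]F-\obj[][\zi]F=\obj[][+]f\circ\hh[][\zi][N]_{*}$ precisely says $\obj[][+]{\per[R]}\left(G\right)=\obj[][+]f$. One must also check $G\in\germ{x,y}_{>k}$, which follows because $\obj{\Sigma}_{*}$ is bounded, hence the formal solution has no obstruction in the $u^n$-terms for $n\leq k$; this is where Theorem~\ref{thm:cohomog}~1 enters.

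\textbf{Membership in $\mathcal{C}_{*}$.} The remaining point — and the main obstacle — is to prove $G\in\mathcal{C}_{*}$, i.e.\ that $G\left(x,y\right)=y\sum_{n=1}^{2k}u_{*}^{n}f_n\left(y\right)$ with $f_n\in\holb[\left\{\left|y\right|<2\right\}]$. The argument should mirror verbatim the proof of Corollary~\ref{cor:orbital_saddle}~2: write $G=Q_{*}$ for a $Q$ holomorphic on $\cc\times C$ (with $C=\left\{1<\left|y\right|<2\right\}$), using the explicit formula $X_R\cdot\Sigma_{*}=\left(u^{k+1}\partial_u\Sigma+y\left(1+\mu u^k+R\right)\partial_y\Sigma\right)_{*}$ together with the estimates of Theorem~\ref{thm:Cauchy-Heine}~4 on $u\partial_u\Sigma$ and $y\partial_y\Sigma$ to bound the growth of $u\mapsto Q\left(u,y\right)$ by $\OO{u^{2k}}$ as $u\to\infty$; hence $Q\in\holf[C]\left[u\right]_{\leq 2k}$. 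The hollow-domain argument (rotating $x=\ee^{\ii\theta}$ around the unit circle) then extends each coefficient $f_n$ from the annulus $C$ to the full disc $\left\{\left|y\right|<2\right\}$, and the vanishing at $y=0$ of $f_n$ follows from $\partial_y\Sigma\left(u,y\right)=\OO y$ in the relevant sector (valuation of the Cauchy-Heine integrand in $\eqref{eq:CH_definition}$), while the absence of an $n=0$ term follows from $\partial_u\Sigma=\OO{\sqrt u}$ at $u=0$, forcing $Q\left(0,y\right)=0$. I expect the only genuinely new bookkeeping relative to Corollary~\ref{cor:orbital_saddle} to be the presence of the extra term $R\,Y$ in $X_R$ versus $X_0$ when writing out $X_R\cdot\Sigma_{*}$; since $R=Q'_{*}$ with $Q'\in\holf[C]\left[u\right]_{\leq 2k}$ and $R\left(0,y\right)=0$, this term contributes $y\,R\,\partial_y\Sigma$, which is still $\OO{u^{2k}}$ and still vanishes on $\left\{xy=0\right\}$, so the conclusion $G\in\mathcal{C}_{*}$ survives unchanged. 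Finally, one invokes the injectivity statement of the Cohomological Theorem (uniqueness of $G$ in $\mathcal{C}_{*}$ with prescribed period) to identify this $G$ as $\sec R\left(f\right)$.
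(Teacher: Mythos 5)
Your proposal is correct and follows essentially the same route as the paper: apply the Cauchy--Heine transform $\Sigma\left(N,f\right)$ of Theorem~\ref{thm:Cauchy-Heine} with $N$ the sectorial normalization of $X_{R}$, set $G:=X_{R}\cdot\Sigma_{*}$ (well defined since the jumps are first-integrals of $X_{R}$), read off $\per[R]\left(G\right)=f$ from the prescribed Cousin data, and establish $G\in\mathcal{C}_{*}$ by repeating the growth and hollow-domain arguments of Corollary~\ref{cor:orbital_saddle}~2. Apart from some garbled notation in the middle paragraph and the (harmless, extra) remark about the additional $RY$ term, this is the paper's proof.
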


The proof of this result is postponed until Section~\ref{subsec:section}.
\begin{rem}
In particular $\per~:~\mathcal{C}_{*}\longto\holb[\Delta]'$ is an
isomorphism of Banach spaces, with inverse its \textbf{natural section}
$\sec R$. The estimates given in Theorem~\ref{thm:Cauchy-Heine}
allow to give explicit bound on their norms.
\end{rem}

We finally are able to establish the temporal realization of the Main~Theorem.
\begin{cor}
Let $U$ be a holomorphic unit. Every vector field $UX_{R}$ is analytically
conjugate to a unique $Z_{G,R}$ with $G\in\mathcal{C}_{*}$.
\end{cor}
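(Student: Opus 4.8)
The plan is to combine the two cohomological principles stated in Section~\ref{subsec:Period_section} with the surjectivity of the period operator onto $\mathcal{C}_*$ provided by Theorem~\ref{thm:natural_section}. Recall that $UX_R$ is analytically conjugate to $VX_R$ precisely when the equation $X_R\cdot T = \frac1U - \frac1V$ admits an analytic solution $T$, the conjugacy being $\flow{VX_R}{T}{}$. We want to choose $V = \frac{P}{1+PG}$ with $G\in\mathcal{C}_*$, i.e. $\frac1V = \frac1P + G$, so the equation to be solved becomes
\begin{eqnarray*}
X_R\cdot T & = & \frac1U - \frac1P - G .
\end{eqnarray*}

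First I would settle the formal obstruction. Write $W := \frac1U - \frac1P$. Since $P=\Pi_k(U)$ is (a unit times) the truncation extracting exactly the resonant monomials $u^n$, $n\le k$, from $U$, the function $\frac1U$ and $\frac1P$ share the same coefficients of $u^0,\dots,u^k$ in their Taylor expansions; hence $W\in\germ{x,y}_{>k}$, i.e. $W$ has no $u^n$ term for $n\in\{0,\dots,k\}$. By part 1.(a) of the Cohomological Theorem the equation $X_R\cdot T = W - G$ has a \emph{formal} solution iff $W-G\in\germ{x,y}_{>k}$, which forces $G\in\germ{x,y}_{>k}$ as well; and by part 2.(a) the formal solution converges iff $\per(W-G)=0$, that is $\per(G)=\per(W)$. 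So the task reduces to: find $G\in\mathcal{C}_*$ with $\per(G)=\per(W)$.

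Now $\per(W)$ is an element of $\holb[\Delta]'$ (by the Remark following Theorem~\ref{thm:cohomog}, each component vanishes at $0^{\pm1}$, and boundedness comes from Lemma~\ref{lem:ring_first-int} after shrinking $\Omega$). By Theorem~\ref{thm:natural_section} there is exactly one $G := \sec R(\per(W)) \in \mathcal{C}_*$ with $\per(G)=\per(W)$; equivalently, the period operator restricted to $\mathcal{C}_*$ is a Banach-space isomorphism onto $\holb[\Delta]'$, so $G$ is uniquely determined by $W$, hence by $U$ and $P$. With this $G$, the period of $W-G$ vanishes, the formal $T$ converges, and $T$ is a genuine analytic solution on a neighborhood of $(0,0)$; consequently $\flow{VX_R}{T}{}$ is a germ of biholomorphism conjugating $UX_R$ to $Z_{G,R}=\frac{P}{1+PG}X_R$. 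Uniqueness of $G$ within $\mathcal{C}_*$ follows because two choices $G,\widetilde G\in\mathcal{C}_*$ both making the equation analytically solvable would satisfy $\per(G)=\per(\widetilde G)=\per(W)$, hence $G=\widetilde G$ by injectivity of $\per|_{\mathcal{C}_*}$.

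The main obstacle is the analyticity (not merely sectoriality) of the corrected solution: one has sectorial solutions $\obj T$ of $X_R\cdot\obj T = W-G$ from Theorem~\ref{thm:cohomog}, and the construction of $G=\sec R(\per(W))$ via the refined Cauchy-Heine transform is arranged exactly so that the sectorial discrepancies $\obj[j+1][\iz]T-\obj[][\zi]T$ and $\obj[][\zi]T-\obj[][\iz]T$ all vanish; then Riemann's removable-singularity theorem across $\{xy=0\}$ glues the $\obj T$ into a single $T\in\germ{x,y}$. One must also check that the neighborhood on which $T$ is holomorphic, and on which the flow $\flow{VX_R}{T}{}$ is defined and invertible, can be taken to be a genuine neighborhood of the origin — this is where the bounds on $\per(W)$ and on $\sec R$ from Theorem~\ref{thm:Cauchy-Heine}, together with the earlier estimates, are invoked to ensure $T(0,0)$ and its derivatives are controlled so that $\flow{VX_R}{T}{}$ is tangent to a well-defined linear map at $(0,0)$.
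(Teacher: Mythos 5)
Your proposal follows the paper's own route essentially verbatim: set $W=\frac1U-\frac1{P_*}$, use Theorem~\ref{thm:natural_section} (the natural section $\sec R$) to produce the unique $G\in\mathcal{C}_{*}$ with $\per(G)=\per(W)$, then conclude by part~2.(a) of the Cohomological Theorem that $X_R\cdot T=\frac1U-\frac1V$ with $\frac1V=\frac1{P_*}+G$ has a convergent solution, so $\flow{VX_R}T{}$ conjugates $UX_R$ to $Z_{G,R}$. Your added remarks on gluing the sectorial solutions and on uniqueness via injectivity of $\per|_{\mathcal{C}_*}$ are consistent elaborations of what the paper leaves implicit, so this is the same proof.
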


\begin{proof}
Let $U$ be given and consider the temporal normalizing equation $X_{R}\cdot\widehat{T}=\frac{1}{U}-\frac{1}{P_{*}}$,
which admits a formal solution if $P_{*}$ is given by the projection
of the Taylor series of $U$ on the space $\pol{u_{*}}_{\leq k}$.
According to the previous theorem, we can find $G\in\mathcal{C}_{*}$
such that $\per[R]\left(G\right)=\per[R]\left(\frac{1}{U}-\frac{1}{P_{*}}\right)$.
Then the cohomological equation $X_{R}\cdot T=\frac{1}{U}-\frac{1}{V}$
admits an analytic solution $T$, where $\frac{1}{V}:=\frac{1}{P_{*}}+G$,
and that implies $UX_{R}$ is analytically conjugate to $VX_{R}=Z_{G,R}$.
\end{proof}

\subsection{\label{subsec:cohomog_eq}Cohomological equations and their sectorial
solutions: proof of Theorem~\ref{thm:cohomog}}

Let us consider equation~(\ref{eq:eq-h}). 
\begin{enumerate}
\item Because $Y\cdot\left(x^{a}y^{b}\right)=\left(bq-ap\right)x^{a}y^{b}$
and $X_{R}=u^{k}x\pp x+\left(1+\mu u^{k}+R\right)Y$, no terms $u^{n}$
may belong to the image of $X_{R}$ if $n\leq k$. Conversely, the
coefficients of $F$ can be computed recursively by looking at larger
and larger homogeneous degrees $a+b$ of monomials $x^{a}y^{b}$ appearing
in the Taylor expansion of $G$. Details are left to the reader.
\item Let $\left(x_{*},y_{*}\right)$ be fixed in a sector $\obj{\mathcal{U}}$.
It is well know that the holonomy of $\fol{X_{R}}$, computed on $\left\{ y=y_{*}\right\} $
and obtained by lifting the path $\left(0,y\left(t\right)\right)=\left(0,y_{*}\ee^{\ii t}\right)$
into the leaves of the foliation, is a nonlinearizable parabolic germ
tangent to $x\mapsto\ee^{2\ii\pi\nf pq}x$. Starting close enough
to $0$, its forward or backwards orbit (the direction $t\to\pm\infty$
depending on the sector $\obj[][\zi]{\mathcal{U}}$ or $\obj[][\iz]{\mathcal{U}}$)
accumulates on $0$. The tangent path that is obtained this way can
be deformed within its supporting leaf $\mathcal{L}$ to land on $\left(0,0\right)$.
Indeed on a sufficiently small domain $\Omega\ni\left(0,0\right)$,
the leaf $\mathcal{L}\cap\Omega$ is very close to the set of level
$h:=y_{*}\exp\frac{cx_{*}^{-kq}y_{*}^{-kp}}{k}$ of $y\exp\frac{cu^{-k}}{k}$,
and that implicit relation can be inverted as
\begin{align*}
x\left(t\right) & =\frac{y\left(t\right)^{-\nf pq}}{\left(\frac{c}{k}\log\frac{h}{y\left(t\right)}\right)^{\nf 1{kq}}}.
\end{align*}
Each time $y\left(t\right)$ makes a turn around $0$, the amplitude
$\left|x\left(t\right)\right|$ is multiplied with a factor of order
about $\frac{1}{2\pi^{\nf 1{qk}}}$. By letting $\left|y\left(t\right)\right|$
goes to $0$ in a slower fashion than $y\left(t\right)$ winds around
$0$, we obtain a path that accumulates on $\left(0,0\right)$, \emph{e.g.}
by choosing $0<\alpha<\frac{1}{pk}$ and setting
\begin{align*}
y\left(t\right) & :=y_{*}\left(1+t\right)^{\alpha}\ee^{\ii t}.
\end{align*}
Hence
\begin{align*}
x\left(t\right) & \sim_{t\to\infty}\cst\times t^{-\alpha\nf pq-\nf 1{kq}}\ee^{-\ii t\nf pq}\\
u\left(t\right) & \sim_{t\to\infty}\cst\times t^{-\nf 1k}.
\end{align*}
One easily checks that the image of the lifted path $\gamma$ remains
in the given $u$-sector since
\begin{align*}
\arg u\left(t\right) & =-\frac{1}{kq}\arg\left(\cst+\alpha\ln\left(1+t\right)-\ii t\right)\longto_{t\to\infty}\pm\frac{\pi}{2kq}.
\end{align*}
\item According to the previous computations, we know that along the tangent
path $\gamma$ we have
\begin{align*}
\gamma^{*}\frac{\dd u}{u^{k+1}} & \sim_{t\to\infty}\cst\times\dd t.
\end{align*}
Integrating a monomial $x^{a}y^{b}$ along this path therefore yields
the estimate
\begin{align*}
\gamma^{*}\left(x^{a}y^{b}\frac{\dd u}{u^{k+1}}\right) & \sim_{t\to\infty}\cst\times t^{\frac{\alpha}{q}\left(ap-bq\right)-\frac{a}{kq}}\times\ee^{-\ii qt\left(ap-bq\right)}\dd t.
\end{align*}
There are two cases to consider.
\begin{itemize}
\item Either $ap=bq$, that is $x^{a}y^{b}=u^{n}$, and $\gamma^{*}\left(x^{a}y^{b}\frac{\dd u}{u^{k+1}}\right)\sim_{\infty}t^{-\nf nk}$
which is integrable if and only if $n>k$. In that case the integral
is absolutely convergent.
\item Or $ap\neq bq$ and $\gamma^{*}\left(x^{a}y^{b}\frac{\dd u}{u^{k+1}}\right)$
is conditionally integrable by Dirichlet's test.
\end{itemize}
Therefore $\obj F\left(x_{*},y_{*}\right)$ is a convergent integral
if and only if $G\in\germ{x,y}_{>k}$. It is clearly locally analytic
in $\left(x_{*},y_{*}\right)$. The fact that it is holomorphic on
$\Omega\cap\obj{\mathcal{U}}$ comes from the fact that if $\Omega$
is small enough, then every leaf of $\fol{X_{R}}|_{\obj{\mathcal{U}}}$
is simply connected according to the incompressibility result of \emph{e.g}.~\cite[Proposition 3.1]{TeyIncomp}.\hfill{}$\square$
\end{enumerate}

\subsection{\label{subsec:section}Natural section of the period operator: proof
of Theorem~\ref{thm:natural_section}}

We use Theorem~\ref{thm:Cauchy-Heine} by taking $N$ as the sectorial
normalizations of $X_{R}$ and $f=\left(\obj f\right)\in\mathcal{A}$:
we obtain sectorial functions $\left(\obj F\right):=\Sigma\left(N,f\right)$
solving the period Cousin problem. Define now
\begin{eqnarray*}
G & := & X_{R}\cdot\obj F\,.
\end{eqnarray*}
We conclude the proof by invoking the same arguments as in the proof
of Corollary~\ref{cor:orbital_saddle}~2. 
\begin{itemize}
\item Because by construction the difference of consecutive $\obj F$ is
a first integral of $X_{R}$, the function $G$ is independent on
the sector.
\item It is moreover locally bounded near $\left\{ xy=0\right\} $, so that
it extends to a function holomorphic on $\mathcal{V}_{*}$ by Riemann's
removable singularity theorem. 
\item Then for every fixed $y\in\left\{ 1<\left|y\right|<2\right\} $ we
have $G\left(x,y\right)=\OO{u^{2k}}$ as $u\to\infty$, \emph{i.e.}
$G\left(x,y\right)=\sum_{n=0}^{2k}u^{n}f_{n}\left(y\right)$.
\item Moreover $f_{0}\left(y\right)=0$ because $\obj F$ is $\OO{\sqrt{u}}$
as $u\to0$. 
\item Each function $f_{n}$ extends holomorphically to the disc $\left\{ \left|y\right|<2\right\} $
because of the shape of $\mathcal{V}_{*}$.
\end{itemize}
Therefore $G\in\mathcal{C}_{*}$ as expected.\hfill{}$\square$

\section{\label{sec:Uniqueness}Isotropy of resonant foliations and uniqueness
of the normal form}

As an application of the material introduced previously, we explain
how we deduce from the description of the cokernel of the period operator
that the vector fields obtained in the Normalization Theorem are essentially
unique in a given analytical class. Said differently, we wish to prove
that the automorphisms of the versal family $\left\{ Z_{G,R}~:~G,R\in\mathcal{C}_{*}\right\} $
are only given by the linear mappings $\left(x,y\right)\mapsto\left(\alpha x,\beta y\right)$
with $\left(\alpha^{q}\beta^{p}\right)^{k}=1$. We mainly rely on
the following structure theorem, which describes the isotropy group
of resonant foliations.
\begin{thm}
\label{thm:isotropy_factor}Take two vector fields $Z=U\left(X_{0}+RY\right)$
and $\widetilde{Z}=\widetilde{U}\left(X_{0}+\widetilde{R}Y\right)$,
not necessarily in normal form. If $\Psi$ is a conjugacy between
$Z$ and $\widetilde{Z}$, say $\Psi^{*}Z=\widetilde{Z}$, then it
can be factored as
\begin{align*}
\Psi & =\Lambda\circ\mathcal{N}\circ\mathcal{T}
\end{align*}
where:
\begin{itemize}
\item $\Lambda~:~\left(x,y\right)\longmapsto\left(\alpha x,\beta y\right)$
with $\left(\alpha^{q}\beta^{p}\right)^{k}=1$;
\item the diffeomorphism $\mathcal{N}\in\diff$ preserves the resonant monomial,
that is $\mathcal{N}=\flow YN{}=\left(x\ee^{-pN},y\ee^{qN}\right)$
for some germ $N$ at $\left(0,0\right)$ of a holomorphic function;
\item the diffeomorphism $\mathcal{T}\in\diff$ sends a leaf of $\fol{\widetilde{Z}}$
within itself (it is a tangential isotropy of $\fol{\widetilde{Z}}$),
that is $\mathcal{T}=\flow{\widetilde{Z}}T{}$ for some holomorphic
germ $T$.
\end{itemize}
\end{thm}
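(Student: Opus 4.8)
The plan is to strip $\Psi$ of a diagonal linear factor, then of a ``$Y$-factor'', the residue being a tangential isotropy.

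\textbf{Step 1 (separatrices and linear part).} The curves $\{x=0\}$ and $\{y=0\}$ are the two separatrices of the resonant saddle, so the conjugacy $\Psi$ permutes them. Its linear part conjugates $\DD Z(0,0)$ to $\DD\widetilde Z(0,0)$, both equal to $P(0)\,c\cdot\mathrm{diag}(-p,q)$, whose eigenvalues $-p$ and $q$ are distinct of opposite sign; hence $\DD\Psi(0,0)$ is diagonal and the permutation is trivial, so $\Psi=\left(xg_{1},yg_{2}\right)$ for units $g_{1},g_{2}$. Set $\Lambda~:~\left(x,y\right)\mapsto\left(\alpha x,\beta y\right)$ with $\alpha:=g_{1}(0,0)$, $\beta:=g_{2}(0,0)$. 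Pre- and post-composing $\Psi$ with the formal normalizations of $Z$ and $\widetilde Z$ (which can be chosen tangent to the identity) produces a formal conjugacy between the two formal normal forms; by the formal uniqueness recalled in the introduction this forces $\left(\alpha^{q}\beta^{p}\right)^{k}=1$, so $\Lambda$ is admissible and $\Lambda^{*}Z$ is again of the form $U'\left(X_{0}+R'Y\right)$ with $U'(0,0)=P(0)=\widetilde U(0,0)$ and $R'(0,0)=0$. Replacing $\left(Z,\Psi\right)$ by $\left(\Lambda^{*}Z,\Lambda^{-1}\circ\Psi\right)$ we may henceforth assume $\Psi$ is tangent to the identity, $g_{j}(0,0)=1$.

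\textbf{Step 2 (the key estimate).} Let $\theta:=g_{1}^{q}g_{2}^{p}$, so $u\circ\Psi=u\theta$, $\theta(0,0)=1$. From $\Psi^{*}Z=\widetilde Z$ one has $\widetilde Z\cdot F=\left(Z\cdot\left(F\circ\Psi^{-1}\right)\right)\circ\Psi$; applying this to $F:=u\circ\Psi$ and using $Z\cdot u=Uqu^{k+1}$, $Y\cdot u=0$, division by $u$ gives the transport equation
\begin{align*}
X_{\widetilde R}\cdot\theta & =qu^{k}\,\theta\left(\varrho-1\right),\qquad\varrho:=\frac{\left(U\circ\Psi\right)\theta^{k}}{\widetilde U},\quad\varrho(0,0)=1 .
\end{align*}
Its right-hand side is divisible by $u^{k}$ and its $u^{k}$-coefficient vanishes (as $\theta\left(\varrho-1\right)$ vanishes at the origin), so it lies in $\germ{x,y}_{>k}$; by part~1 of Theorem~\ref{thm:cohomog} the equation has a unique formal solution vanishing at the origin, namely $\theta-1$. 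The crux is to show that this solution belongs to the ideal $\left(u^{k}\right)\subset\germ{x,y}$, i.e. $\theta\equiv1\pmod{u^{k+1}}$ — equivalently, that the meromorphic germ $\Delta:=u^{-k}\circ\Psi-u^{-k}=\left(\theta^{-k}-1\right)u^{-k}$, whose polar set lies in $\{xy=0\}$, is in fact holomorphic. Here $\Delta$ obeys the \emph{linear} transport equation $\widetilde Z\cdot\Delta=-kq\left(U\circ\Psi-\widetilde U\right)$ with holomorphic right-hand side, and the plan is to exclude poles of $\Delta$ along each separatrix by descending through its Laurent coefficients. Along $\{y=0\}$, where $X_{\widetilde R}$ reduces to $-p\left(c+\widetilde R(x,0)\right)x\pp x$, the vanishing of the leading polar coefficient of $\widetilde Z\cdot\Delta$ forces a leading pole coefficient $\delta_{\ell}(x)$ of $\Delta$ to satisfy $px\,\delta_{\ell}'+\ell q\,\delta_{\ell}=0$, hence $\delta_{\ell}(x)=\cst\cdot x^{-\ell q/p}$; since $u^{k}\Delta=\theta^{-k}-1$ is holomorphic, and using the symmetric analysis on $\{x=0\}$, one would eliminate the polar coefficients one after the other and conclude $\Delta\in\germ{x,y}$. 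This divisibility step — the formal statement that the solution of the transport equation provided by the Cohomological Theorem lies in $\left(u^{k}\right)$ — is where the resonant shape of $X_{0}$ (the $u^{k}$ weighting in the transport equation) is essential, and it is the part I expect to be the most delicate to make fully rigorous.

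\textbf{Step 3 (splitting off the tangential isotropy).} Because $\widetilde Z\cdot u=\widetilde U q u^{k+1}$, the flow of $\widetilde Z$ satisfies $\left(u\circ\flow{\widetilde Z}{s}{}\right)^{-k}=u^{-k}-kq\int_{0}^{s}\widetilde U\circ\flow{\widetilde Z}{\tau}{}\;\dd\tau$, a holomorphic perturbation of $u^{-k}$; hence, by the implicit function theorem, a germ $v\in u\cdot\left(1+\left(u^{k}\right)\right)$ is realized as $u\circ\mathcal T$ for a unique tangential isotropy $\mathcal T=\flow{\widetilde Z}{T}{}$ with $T$ a holomorphic germ — the divisibility by $u^{k}$ being exactly what keeps $T$ bounded at the origin. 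By Step~2, $v:=u\circ\Psi$ lies in that set, so we obtain such a $\mathcal T$ with $u\circ\mathcal T=u\circ\Psi$. Then $\mathcal N:=\Psi\circ\mathcal T^{-1}$ fixes each separatrix and satisfies $u\circ\mathcal N=\left(u\circ\Psi\right)\circ\mathcal T^{-1}=\left(u\circ\mathcal T\right)\circ\mathcal T^{-1}=u$; writing $\mathcal N=\left(x\widehat g_{1},y\widehat g_{2}\right)$, the relation $\widehat g_{1}^{q}\widehat g_{2}^{p}=1$ lets one put $N:=\frac{1}{q}\log\widehat g_{2}$ (the branch agreeing with the linear part of $\mathcal N$) and check $\widehat g_{1}=\ee^{-pN}$, so $\mathcal N=\flow{Y}{N}{}$. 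Restoring Step~1, one recovers the desired factorization $\Psi=\Lambda\circ\mathcal N\circ\mathcal T$, with $\Lambda,\mathcal N,\mathcal T$ of the three required types.
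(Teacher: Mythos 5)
Your Steps 1 and 3 are essentially workable (Step 3 in particular is a nice, explicit way of producing the tangential factor and of writing $\mathcal N=\flow YN{}$ from $u\circ\mathcal N=u$), but the whole weight of the proof rests on Step 2, which you yourself flag as delicate, and the mechanism you propose there cannot close the gap as described. Your Laurent-coefficient descent uses only the fact that $\widetilde Z\cdot\Delta$ is holomorphic, where $\Delta:=u^{-k}\circ\Psi-u^{-k}$. That information is blind to exactly the polar terms you must exclude: since $X_{\widetilde R}\cdot u=q\,u^{k+1}$, one has $X_{\widetilde R}\cdot u^{-m}=-mq\,u^{k-m}$, which is holomorphic for every $1\leq m\leq k$. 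Hence adding to $\Delta$ any combination $\sum_{m=1}^{k}c_{m}u^{-m}$ does not affect the holomorphy of $\widetilde Z\cdot\Delta$, and such terms also pass both of your leading-coefficient tests: along $\left\{ y=0\right\} $ they produce precisely $\delta_{\ell}\left(x\right)=\tx{cst}\cdot x^{-\ell q/p}$ with $\ell=pm$, and symmetrically along $\left\{ x=0\right\} $. These monomials are exactly the resonant directions responsible for the cokernel of the Lie derivative described in 1.(a) of the Cohomological Theorem, so no argument based solely on the transport equation with a holomorphic right-hand side can yield $\Delta\in\germ{x,y}$; the divisibility $u\circ\Psi\equiv u\pmod{u^{k+1}}$ is true (it is a consequence of the theorem), but proving it requires an extra input, e.g. first establishing the factorization $\Psi=\widehat{\mathcal{N}}\circ\widehat{\mathcal{T}}$ at the formal level (which gives $u\circ\Psi-u\in\left(u^{k+1}\right)$ on Taylor series, hence as germs) -- and that formal statement is itself the heart of the matter, not supplied by your sketch. (A smaller point: in Step 1, the equality of the linear parts and the triviality of the separatrix permutation are not automatic when $p=q=1$, where a swap conjugating $U$ to $-U$ is not excluded by distinctness of eigenvalues alone.)

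For comparison, the paper takes a different route that bypasses your Step 2 entirely: after reducing to $\Psi$ tangent to the identity, it composes the sectorial normalizations $\obj{\Psi}$ and $\obj{\widetilde{\Psi}}$ of $X_{0}+RY$ and $X_{0}+\widetilde{R}Y$; because a tangent-to-identity conjugacy induces the identity between the two orbital necklaces, the compositions $\left(\obj{\widetilde{\Psi}}\right)^{\circ-1}\circ\obj{\Psi}$ agree on overlapping sectors and glue to a single germ $\mathcal{N}$ fixing $u$, and the remaining factor $\mathcal{T}=\mathcal{N}^{\circ-1}\circ\Psi$ is tangential and is put in the form $\flow{\widetilde Z}T{}$ by the arguments of~\cite{BerCerMez}. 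Your Step 3 would give a pleasant, self-contained substitute for that last citation, but only once the divisibility of Step 2 has actually been proved -- by an argument of the sectorial/necklace (or at least formal normal form) type, not by the Laurent descent you outline.
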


We prove this result in Section~\ref{subsec:Proof_of_Factor} below.
\begin{rem}
~
\begin{enumerate}
\item Even without applying the Normalization Theorem, it is known since
Dulac's works~\cite{Dulac,Dulac2} that any resonant saddle vector
field can be written $U\left(X_{0}+RY\right)$ in a convenient local
analytic chart about $\left(0,0\right)$. 
\item A similar result was obtained in~\cite{BerCerMez}, although the
foliations were presented in a different fashion and the fixed fibration
was $\left\{ x=\cst\right\} $ instead of the singular fibration $\left\{ u=\cst\right\} $
considered here.
\end{enumerate}
\end{rem}

In order to deduce the uniqueness statement of the Normalization Theorem
from Theorem~\ref{thm:isotropy_factor}, we only need to show that
$\mathcal{N}=\mathcal{T}=\id$ when $Z=Z_{G,R}$ and $\widetilde{Z}=Z_{\widetilde{G},\widetilde{R}}$
are in normal form. There exists $U$ a holomorphic unit such that
$X_{R}\circ\mathcal{N}=UX_{\widetilde{R}}\cdot\mathcal{N}$. Let us
write this relation in the basis $\left(u^{k}x\pp x,Y\right)$:
\begin{align*}
\begin{cases}
u^{k+1} & =U\times u^{k+1}\\
c\left(1-u^{2k}\right)+\mu u^{k}+R\left(u,y\ee^{qN}\right) & =U\times\left(c\left(1-u^{2k}\right)+\mu u^{k}+\widetilde{R}\left(u,y\right)+X_{\widetilde{R}}\cdot N\right)
\end{cases} & .
\end{align*}
We deduce from this system that $U=1$ on the one hand, while on the
other hand we have
\begin{align*}
X_{\widetilde{R}}\cdot N & =R\left(u,y\ee^{qN}\right)-y\widetilde{R}\left(u,y\right)=:L.
\end{align*}

\begin{prop}
\label{prop:inversion}$N\in y\germ y$.
\end{prop}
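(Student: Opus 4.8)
The plan is to turn the hypothesis into an implicit equation for $N$ in terms of the \emph{sectorial normalizers} of $X_{R}$ and $X_{\widetilde R}$, then to exploit that these normalizers are \emph{bounded} holomorphic functions on the \emph{infinite} sectors of the resonant monomial in order to continue $N$ from a germ to a bounded holomorphic function whose domain fills the whole $u_{*}$-plane over $\left\{1<\left|y\right|<2\right\}$, and finally to conclude by Liouville's theorem. The inversion $u\mapsto\nf1u$ intervenes precisely here: it is the $\nf{\sqrt u}{\sqrt z}$-normalization of the Cauchy--Heine transform (Remark~\ref{rem:inversion}) that produces normalizers which are $\OO{\sqrt{u}}$ at $0$ and decay at $\infty$, hence are bounded on infinite sectors, and that global boundedness is what the Liouville step consumes.

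\emph{Normalization and the key relation.} The linear part $\DD{\mathcal N}\left(0\right)=\flow Y{N\left(0,0\right)}{}$ is an admissible diagonal map, so after replacing $\mathcal N$ by $\flow Y{-N\left(0,0\right)}{}\circ\mathcal N$ and transferring the constant factor to $\Lambda$ we may assume $N\left(0,0\right)=0$. Since $\mathcal N^{*}X_{R}=X_{\widetilde R}$ (recall $U=1$), the germ $\mathcal N$ carries leaves of $\fol{X_{\widetilde R}}$ to leaves of $\fol{X_{R}}$, preserves $u_{*}$, and — being tangent to the identity — induces the trivial isomorphism between the orbital necklaces of $X_{R}$ and $X_{\widetilde R}$ (on each constituent sphere the induced map is conformal, fixes the two poles and is tangent to the identity, hence is $\id$). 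Therefore the canonical sectorial first-integral of $\fol{X_{R}}$ precomposed with $\mathcal N$ equals that of $\fol{X_{\widetilde R}}$; since these first-integrals are $\sigma^{n}y\,\widehat{H}\left(u_{*}\right)$ times the exponential of the respective sectorial normalizers (Definition~\ref{def_first-integral}), while $\mathcal N$ multiplies $y$ by $\ee^{qN}$ and fixes $u_{*}$, this reads, on each sector near the origin,
\begin{equation}\label{eq:N-implicit}qN\left(x,y\right)=\obj{\widetilde N}\bigl(u_{*},y\bigr)-\obj{N}\bigl(u_{*},y\ee^{qN\left(x,y\right)}\bigr),\end{equation}
where $\obj{N}$ (resp. $\obj{\widetilde N}$) is the sectorial normalizer of $X_{R}$ (resp. $X_{\widetilde R}$) produced in Section~\ref{sec:Orbital_realization}, viewed as a function of $\left(u,y\right)$ because the vector fields are in normal form (Lemma~\ref{lem:secto_norma}~3).

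\emph{Extension and conclusion.} Each $\obj{N}$, $\obj{\widetilde N}$ is holomorphic and bounded on the whole infinite fibred sector $\sect[j][\sharp]\times D^{\sharp}$, with $D^{\zi}=\left\{\left|y\right|<2\right\}$ and $D^{\iz}=\left\{1<\left|y\right|<2\right\}$, is $\OO{\sqrt{u}}$ as $u\to0$, and has norm that can be made as small as desired by enlarging the twist $c$ (Theorem~\ref{thm:Cauchy-Heine}). Reading \eqref{eq:N-implicit} as an implicit equation, the right-hand side is a contraction in the unknown $w=N$ near $w=0$ for $\left(u_{*},y\right)$ in any such sector, with a unique small solution that is holomorphic, bounded, and agrees with the germ $N$ near the origin; these sectorial solutions agree on overlaps (all continuing the same germ), so together they define a bounded holomorphic function $\bar N$ on the connected set $\mathcal D:=\bigl(\bigcup_{j,\sharp}\sect[j][\sharp]\times D^{\sharp}\bigr)\cup\left(\text{nbhd of }\left(0,0\right)\right)$. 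Over $\left\{1<\left|y\right|<2\right\}$ all $2k$ sectors are available and $\bigcup_{j,\sharp}\sect[j][\sharp]=\cc^{\times}$, so for each such $y$ the map $u\mapsto\bar N\left(u,y\right)$ is holomorphic and bounded on $\cc^{\times}$; by Riemann's removable-singularity theorem it extends across $u=0$, where it equals $0$ (being $\OO{\sqrt{u}}$), and by Liouville it is constant, hence $\equiv0$. Thus $\bar N$ vanishes on $\cc\times\left\{1<\left|y\right|<2\right\}\subset\mathcal D$, so the identity principle gives $\bar N\equiv0$ on $\mathcal D$; in particular $N\equiv0$, and a fortiori $N\in y\germ y$ (if one wants only $N\in y\germ y$ it suffices to stop at the point where $\bar N$ is seen to be independent of $u$ near $\left(0,0\right)$). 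I expect the delicate steps to be the boundedness of the sectorial normalizers on the \emph{infinite} $u$-sectors — the place where the inversion symmetry is used — and the care needed to see that \eqref{eq:N-implicit} genuinely continues the single germ $N$ across all $2k$ sectors into a connected domain.
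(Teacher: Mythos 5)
Your route is genuinely different from the paper's (which extends the fibered conjugacy $\Phi$ along $\left\{ y=0\right\} $ by a Mattei--Moussu path-lifting argument, then uses the inversion identity $\per[R^{*}]\left(R^{*}\right)=-\tau^{*}\per[R]\left(R\right)$ to conjugate the saddle-nodes at $u=\infty$, extends $\Phi$ to $\proj\times D$ by uniqueness of fibered saddle-node conjugacies, and only then applies Liouville to the coefficients $\phi_{n}$), but as written it has genuine gaps. First, your implicit relation for $N$ is not established: a $u$-preserving conjugacy tangent to the identity a priori only yields $\hh[][\zi]_{*}\circ\mathcal{N}=a\,\hh[][\zi][\widetilde{N}]_{*}$ for some constant $a\in\cc^{\times}$, since the isotropy of a necklace is $\GL 1{\cc}\times\zsk\times\zsk[2]$; proving $a=1$ is precisely the content hidden in your parenthetical ``tangent to the identity, hence $\id$'' and needs an argument (it can be supplied, e.g.\ by letting $\left(x,y\right)\to\left(0,0\right)$ inside a $\zi$-sector and using $N\left(0,0\right)=0$ together with $\obj N=\OO{\sqrt{u}}$, but it is the crux, not a remark). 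Second, and more seriously, the claim that the $2k$ sectorial solutions ``agree on overlaps (all continuing the same germ)'' fails as stated: the sectors $\ssect[][\iz]$ live over the annulus $1<\left|y\right|<2$ and never meet the origin, so the $\iz$-solutions are \emph{not} continuations of the germ by fiat; they solve a \emph{different} implicit equation (built from $\obj[][\iz]N,\obj[][\iz]{\widetilde{N}}$), and their agreement with the $\zi$-continuation on $\ssect[][\pm]$ is exactly equivalent to the on-the-nose equality of the Stokes data of $R$ and $\widetilde{R}$ (together with $a=1$). This must be proved, and without it there is no bounded continuation of $N$ to all of $\cc^{\times}$ in $u$, so the Liouville step has nothing to act on.

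Third, the contraction/well-posedness step is unsupported with the bounds the construction actually guarantees for the \emph{given} twist: the paper only provides $\norm[N]{\mathcal{V}}\leq1$ and $\norm[y\frac{\partial N}{\partial y}]{\mathcal{V}}\leq\frac{1}{2}$, so the Lipschitz constant of your right-hand side in $w$ is only bounded by $\nf q2$, and worse, $\left|qw\right|$ may be as large as $2$, so $y\ee^{qw}$ can leave the domain of $\obj N$ altogether (for the $\iz$-sectors that domain in $y$ is only the annulus). You cannot repair this by ``enlarging $c$'': in the uniqueness statement $R$ and $\widetilde{R}$ are given with their twist fixed. Finally, your localization of where the inversion symmetry enters is off: boundedness of the normalizers on the infinite sectors is automatic from the Banach fixed-point setting and does not use the $\nf{\sqrt{u}}{\sqrt{z}}$ normalization of Theorem~\ref{thm:Cauchy-Heine}; in the paper (Remark~\ref{rem:inversion}) that normalization is used to identify the orbital modulus of the saddle-node at $u=\infty$, which is exactly the difficulty your sector-gluing problem reproduces in different clothing. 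Conditional on the two points above being proved, your Liouville endgame is fine and would even give $N\equiv0$ directly, but those points carry essentially the same weight as the paper's inversion lemma.
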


We postpone the proof of this result till Section~\ref{subsec:Proof_of_inversion};
in the meantime it tells us that $L\in\polg uy$. Since $N$ is holomorphic
we can assert that $\per\left(L\right)=0$ according to Cohomological
Theorem~2.(a). But the item~2.(b) of the same theorem implies that
$L=0$, which in turn implies $N=\cst$ and $R=\widetilde{R}$, as
expected.

To conclude the uniqueness of the normal form, we apply again the
previous argument: since $Z_{G,R}$ and $Z_{\widetilde{G},R}$ are
conjugate by $\mathcal{T}$, there exists a holomorphic germ $T$
such that $X_{R}\cdot T=G-\widetilde{G}\in\polg uy$, so that $G=\widetilde{G}$.
This concludes the proof of the Normalization Theorem.

\subsection{\label{subsec:Proof_of_Factor}Proof of Theorem~\ref{thm:isotropy_factor}}

Since $\Lambda$ can be read in the linear part of $\Psi$ (which
must be diagonal thanks to the form of $X_{0}$), we may as well assume
that $\Psi$ is tangent to the identity. Let $\left(\obj{\Psi}\right)$
\emph{resp}. $\left(\obj{\widetilde{\Psi}}\right)$ be the collection
of sectorial normalizations of $X:=X_{0}+RY$ \emph{resp}. $\widetilde{X}:=X_{0}+\widetilde{R}Y$
given in Lemma~\ref{thm:Cauchy-Heine}. By assumption $\Psi$ induces
the identity between the orbital necklaces of $\fol Z$ and $\fol{\widetilde{Z}}$,
hence $\left(\obj{\widetilde{\Psi}}\right)^{\circ-1}\circ\obj{\Psi}=:\mathcal{N}$
does not depend on the sector, and therefore extend as an element
$\mathcal{N}\in\diff$ such that $\mathcal{N}^{*}X=\widetilde{X}$.
But by construction $\mathcal{N}$ fixes the monomial $u$.

Consider now $\mathcal{T}:=\mathcal{N}^{\circ-1}\circ\Psi$, which
is a conjugacy between $\left(U\circ\mathcal{N}\right)X_{\widetilde{R}}$
and $\widetilde{U}X_{\widetilde{R}}$, that sends a leaf of $\fol{\widetilde{X}}$
into itself. Following the arguments of~\cite{BerCerMez} we deduce
that $\mathcal{T}$ has the expected form.\hfill{}$\square$

\subsection{\label{subsec:Proof_of_inversion}Proof of Proposition~\ref{prop:inversion}}

The argument takes place in the variables $\left(u,y\right)$. Indeed,
as it has been remarked in Section~\ref{subsec:Summability}, the
foliations $\fol{X_{R}}$ and $\fol{X_{\widetilde{R}}}$ correspond
to foliations $\fol{}$ and $\widetilde{\fol{}}$ in $\left(u,y\right)$-space
given by the differential 1-forms $\omega_{R}$ and $\omega_{\widetilde{R}}$
defined in~(\ref{eq:NF_u-y}). Because $\mathcal{N}$ fixes the resonant
monomial, it induces an orbital conjugacy $\Phi~:~\left(u,y\right)\mapsto\left(u,y\phi\left(u,y\right)\right)$
between $\omega_{R}$ and $\omega_{\widetilde{R}}$. Using a construction
\emph{à la} Mattei-Moussu (path-lifting technique), $\Phi$ extends
holomorphically to a neighborhood of $\left\{ y=0\right\} \simeq\cc$.
The key point is to extend it to a neighborhood of $\proj$ by using
a suitable compactification.

The foliations $\fol{}$ and $\widetilde{\fol{}}$ can be extended
to $\proj\times D$, where $D$ is the disc $\left\{ \left|y\right|<2\right\} $.
In the chart $\left(z,y\right)=\left(\frac{1}{u},y\right)$, the $1$-form
$\omega_{R}$ is written
\begin{align*}
z^{-k-1}\dd y-y\left(c\left(1-z^{-2k}\right)+\mu z^{-k}+R\left(z^{-1},y\right)\right)z^{-2}\dd z & ,
\end{align*}
which becomes holomorphic after multiplication with $z^{2k+2}$:
\begin{align*}
\omega_{R^{*}}\left(z,y\right) & =z^{k+1}\dd y+y\left(c\left(1-z^{2k}\right)-\mu z^{k}+R^{*}\left(z,y\right)\right)\dd z
\end{align*}
where
\begin{align*}
R^{*}\left(z,y\right) & =-z^{2k}R\left(z^{-1},y\right)=-y\sum_{n=1}^{2k}f_{n}\left(y\right)z^{2k-n}.
\end{align*}
The singularity of $\fol{}$ and $\widetilde{\fol{}}$ at $\left(\infty,0\right)$
is again a saddle-node, with formal orbital class $\left(k,-\mu\right)$.
The diffeomorphism $\Phi$ induces a conjugacy between their weak
holonomies computed on a transversal $\left\{ u=\cst\right\} $ close
to $\left\{ \infty\right\} \times D$, but unlike for the case of
resonant saddles this does not automatically imply that $\fol{}$
and $\widetilde{\fol{}}$ are $\Phi$-conjugate on a full neighborhood
of $\left(\infty,0\right)$. Fortunately that fact is guaranteed by
the construction of the normal form, as already hinted at in Remark~\ref{rem:inversion}.
\begin{lem}
Consider the involution $\tau$ of $\zsk\times\left\{ +,-\right\} $
defined by $\tau\left(j,\pm\right):=\left(k-1-j,\mp\right)$, and
let it act on collections $\left(\obj[][\pm]f\right)_{j\in\zsk}$
in the natural way. Then we have
\begin{align*}
\per[R^{*}]\left(R^{*}\right) & =-\tau^{*}\per[R]\left(R\right).
\end{align*}
\end{lem}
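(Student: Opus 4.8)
The plan is to view both $\per[R](R)$ and $\per[R^{*}]\left(R^{*}\right)$ as logarithmic orbital moduli of the two saddle-node singularities carried by a single foliation, and to compare them by means of the chart change $\tau\colon u\mapsto z=\nf1u$. Recall that the $1$-forms $\omega_{R}$ and $\omega_{R^{*}}$ of~(\ref{eq:NF_u-y}) are, up to the holomorphic unit $z^{2k+2}$, mutual pullbacks under $z=\nf1u$; hence $\fol{X_{R}}$ and $\fol{X_{R^{*}}}$ are two charts of one and the same foliation $\mathcal{G}$ on $\proj\times D$, carrying a saddle-node of class $\left(k,\mu\right)$ at $\left(0,0\right)$ and one of class $\left(k,-\mu\right)$ at $\left(\infty,0\right)$. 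Since $R^{*}$ contains no pure power of the resonant monomial it lies in $\germ{z,y}_{>k}$, so $\per[R^{*}]$ is defined, and Corollary~\ref{cor:invar_as_period} applied to $X_{R}$ (resp. to $X_{R^{*}}$) identifies $\per[R](R)$ (resp. $\per[R^{*}]\left(R^{*}\right)$), up to the normalization $\obj[][\pm]{\psi}=\sigma\id\exp\obj{\varphi}$, with the logarithmic transition data of the orbital necklace of $\mathcal{G}$ read near $u=0$ (resp. near $u=\infty$).

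By Theorem~\ref{thm:cohomog}~3 (and the remark following Definition~\ref{def_period}), each such datum is the integral of $R\,\nf{\dd u}{u^{k+1}}$ (resp. of $R^{*}\,\nf{\dd z}{z^{k+1}}$) along an asymptotic cycle of $\mathcal{G}$ turning around the relevant singular point. The crucial point is that, under $z=\nf1u$, the form $R^{*}\,\nf{\dd z}{z^{k+1}}$ pulls back \emph{exactly} to $R\,\nf{\dd u}{u^{k+1}}$, because $R^{*}\left(\nf1u,y\right)=-u^{-2k}R\left(u,y\right)$ while $\nf{\dd z}{z^{k+1}}=-u^{2k}\,\nf{\dd u}{u^{k+1}}$, the two minus signs cancelling. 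So, transporting an asymptotic cycle $\delta$ of $\mathcal{G}$ near $u=\infty$ into the chart $u$ leaves the integrand unchanged; I would then deform $\delta$, inside its supporting leaf of $\mathcal{G}$ — simply connected by~\cite[Proposition~3.1]{TeyIncomp} — into an asymptotic cycle near $u=0$. This deformation exists precisely because $\widehat{H}\left(u\right)=u^{-\mu}\exp\frac{c\left(u^{-k}+u^{k}\right)}{k}$ has essential singularities at both $0$ and $\infty$, and because the shape of the twist term $c\left(1-u^{2k}\right)$ forces $\widehat{H}\left(\nf1u\right)=u^{\mu}\exp\frac{c\left(u^{k}+u^{-k}\right)}{k}$, i.e. the model first-integral of the class $\left(k,-\mu\right)$ — so every leaf accumulates on both singularities and the $h$-coordinates read from the two ends agree up to a power of $\sigma$. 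The integral being invariant under the deformation, and a small positively oriented loop about $u=\infty$ becoming a small \emph{negatively} oriented loop about $u=0$, one obtains $\per[R^{*}]\left(R^{*}\right)=-\tau^{*}\per[R](R)$, where the global minus comes from the orientation reversal and the permutation $\tau$ of the $2k$ sectors/intersections is the one induced by the orientation-reversing map $u\mapsto\nf1u$.

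The one genuinely delicate point — the main obstacle — is the combinatorial bookkeeping just invoked: one must check from Definitions~\ref{def_sectors} and~\ref{def:formal_model_first-integral} that $u\mapsto\nf1u$ induces precisely the index shift $j\mapsto k-1-j$, the polarity exchange $\pm\mapsto\mp$, and the global sign, all consistently with the $\sigma$-normalization of the sectorial first-integrals. The clean way to carry this out rigorously is to rerun the construction of Section~\ref{sec:Orbital_realization} — the iterated Cauchy-Heine transform producing the sectorial normalizations, hence the logarithmic data — through the conjugation by $u\mapsto\nf1u$, and to invoke the $\tau$-equivariance of the refined Cauchy-Heine transform established in~\cite[Section~4.3, Proposition~4.11]{TeySphere}; the normalizing factor $\nf{\sqrt u}{\sqrt z}$ in~(\ref{eq:CH_definition}) was chosen precisely so that this equivariance holds (Remark~\ref{rem:inversion}). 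The asserted identity then falls out of the corresponding identity for the Cauchy-Heine fixed points, after reading off the periods via Corollary~\ref{cor:invar_as_period}.
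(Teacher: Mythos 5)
Your first, geometric argument has a genuine gap at the step ``deform $\delta$, inside its supporting leaf, into an asymptotic cycle near $u=0$'' with the integral claimed to be unchanged. The pullback computation of the integrand $R\,\frac{\dd u}{u^{k+1}}$ is correct, but the two periods are integrals along asymptotic cycles attached to two \emph{different} singular points, and a deformation that drags the ends of the path from $\left(\infty,0\right)$ to $\left(0,0\right)$ is not a homotopy rel endpoints; invariance of the integral under such a deformation is precisely the nontrivial content of the lemma, and it does not follow from simple connectedness of leaves (the incompressibility result you invoke concerns sectorial leaves near $\left(0,0\right)$ only). Indeed the paper stresses, right where the lemma is used, that the matching of the local moduli at the two saddle-nodes is \emph{not} automatic (the weak holonomy does not determine the full modulus at $\left(\infty,0\right)$): it is guaranteed only by the construction of the normal form. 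Moreover, to compare the two periods as functions of the leaf coordinate $h$ you must know that the canonical sectorial first integrals at $0$, read in the chart $z=\nf 1u$, are exactly the canonical first integrals at $\infty$, i.e.\ the model with $-\mu$ multiplied by $\exp$ of the relabelled correction $N$; your symmetry remark about the twist term only accounts for the model factor, while the whole difficulty lies in the factor $\exp\obj N$. Finally, your heuristic for the sign (orientation of small loops around the singular points) is not the actual mechanism: since the integrand pulls back without any sign, the minus sign comes from the reversal of orientation of the Cauchy--Heine integration half-lines under $u\mapsto\nf 1u$ (equivalently, from the swap of the two sectors in each intersection performed by $\tau$), not from loop orientation.

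Your closing paragraph does point to the route the paper actually takes: one performs the change of variable $w=\nf 1z$ in each integral $I_{\Gamma}$ of the representation~(\ref{eq:CH_definition}), the orientation reversal of the half-line produces the sign, the kernel $\nf{\sqrt{u}}{\sqrt{z}}$ is preserved (this is the content of Remark~\ref{rem:inversion}), the composed first integral is identified with the canonical sectorial first integral at infinity attached to $\tau^{*}N$ and to the formal invariant $\left(k,-\mu\right)$, and one concludes from the fact that $N$ is the fixed point of the operator built from the $I_{\Gamma}$, reading off the periods via Corollary~\ref{cor:invar_as_period}. As written, however, you defer exactly these verifications to an equivariance statement quoted from~\cite{TeySphere}, so the decisive computation --- the transformation law of $I_{\Gamma}$, hence of $N$, of the first integrals and of the cocycle, under $u\mapsto\nf 1u$, including the index shift $j\mapsto k-1-j$ and the sign --- is announced rather than carried out; that computation is the proof.
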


\begin{proof}
The action of $\tau$ on the indices $\left(j,\pm\right)$ corresponds
to the action of $\tau~:~u\mapsto\frac{1}{u}$ on the sectors in the
$u$-variable, \emph{i.e.} the sector $\sect[][\pm\mp]$ is sent to
$\sect[k-1-j][\mp\pm]=:\tau^{*}\sect$ by the transform. For every
component $\Gamma$ of $\partial\sect$ we compute:
\begin{align*}
I_{\Gamma}\left(\frac{1}{u},y\right)=\frac{\sqrt{\frac{1}{u}}}{2\ii\pi}\int_{\Gamma}\frac{\obj[][\pm]f\circ\hh[][\pm\mp]\left(z,y\right)}{\sqrt{z}\left(z-\frac{1}{u}\right)}\dd z & =\frac{\sqrt{u}}{2\ii\pi}\int_{\Gamma}\frac{\obj[][\pm]f\circ\hh[][\pm\mp]\left(z,y\right)}{\sqrt{z}\left(uz-1\right)}\dd z
\end{align*}
and perform the change of variable $w:=\tau\left(z\right)$. This
change of variable reverses the orientation of the image half-line
$\tau^{*}\Gamma\subset\partial\tau^{*}\sect$, therefore:
\begin{align*}
I_{\Gamma}\left(\frac{1}{u},y\right) & =-\frac{\sqrt{u}}{2\ii\pi}\int_{-\tau^{*}\Gamma}\frac{\obj[][\pm]f\circ\hh[][\pm\mp]\left(\frac{1}{w},y\right)}{\sqrt{\frac{1}{w}}\left(\frac{u}{w}-1\right)}\frac{\dd w}{w^{2}}\\
 & =-\frac{\sqrt{u}}{2\ii\pi}\int_{\tau^{*}\Gamma}\frac{\obj[][\pm]f\circ\hh[][\pm\mp]\left(\frac{1}{w},y\right)}{\sqrt{w}\left(w-u\right)}\dd w.
\end{align*}
But: 
\begin{align*}
\hh[][\pm\mp]\left(\frac{1}{w},y\right) & =\widehat{\hh[][\pm\mp][0]}\left(w,y\right)\exp\obj[][\pm\mp]N\left(\frac{1}{w},y\right)=\widehat{\hh[k-1-j][\mp\pm][\tau^{*}N]}\left(w,y\right),
\end{align*}
where $\tau^{*}N$ is the collection defined by $\obj[k-1-j][\mp\pm]N\left(u,y\right):=\obj[j][\pm\mp]N\left(\frac{1}{u},y\right)$
and $\widehat{\hh[][][0]}$ is the model first-integral where $\mu$
is replaced by $-\mu$. Finally: 
\begin{align*}
I_{\Gamma}\left(\frac{1}{u},y\right) & =-\widehat{I_{\tau^{*}\Gamma}}\left(u,y\right)
\end{align*}
where in the right-hand side we have replaced $\mu$ by $-\mu$. The
conclusion follows from~\ref{eq:CH_definition} as the collections$N$
is obtained as a fixed-point of the Cauchy-Heine operator built from
a linear combination of terms $I_{\Gamma}$.
\end{proof}
According to the discussion performed in Section~\ref{subsec:Period_section},
more precisely the period presentation of the orbital modulus laid
out in Corollary~\ref{cor:invar_as_period}, we deduce that the following
chain of identities holds:
\begin{align*}
\per[\widetilde{R}^{*}]\left(\widetilde{R}^{*}\right) & =-\tau^{*}\per[\widetilde{R}]\left(\widetilde{R}\right)\\
 & =-\tau^{*}\per[R]\left(R\right)=\per[R^{*}]\left(R^{*}\right)
\end{align*}
since $\fol{}$ and $\widetilde{\fol{}}$ are locally conjugate near
$\left(0,0\right)$. Hence, $\fol{}$ and $\widetilde{\fol{}}$ have
same orbital modulus at $\left(\infty,0\right)$ and are conversely
locally conjugate near $\left(\infty,0\right)$. It is well known
that there exists a unique conjugacy which is fibered in the $u$-variable
(see \emph{e.g}.~\cite{MaRa-SN,Tey-SN}), therefore $\Phi$ extends
as a fibered diffeomorphism $\left(u,y\right)\mapsto\Phi\left(u,y\right)$
on the whole $\proj\times D$. Because $\Phi\left(u,y\right)=\left(u,y\sum_{n\geq0}\phi_{n}\left(u\right)y^{n}\right)$
this means that each function $\phi_{n}$ extends holomorphically
as an entire and bounded function of $u$, hence a constant. This
completes the proof of Proposition~\ref{prop:inversion}.\hfill{}$\square$

\bibliographystyle{alpha}
\bibliography{biblio}

\end{document}